\newtheorem {theorem}{Theorem}[section]
\newtheorem {corollary}[theorem]{Corollary}
\newtheorem {lemma}[theorem]{Lemma}
\newtheorem {definition}[theorem]{Definition}
\newtheorem {remark}[theorem]{Remark}
\newcommand{\ord}{{\rm ord\ }}
\newcommand{\mult}{{\rm mult }}
\def\ar{a\kern-.370em\raise.16ex\hbox{\char95\kern-0.53ex\char'47}\kern.05em}
\def\ees{{\accent"5E e}\kern-.385em\raise.2ex\hbox{\char'23}\kern-.08em}
\def\eex{{\accent"5E e}\kern-.470em\raise.3ex\hbox{\char'176}}
\def\AR{A\kern-.46em\raise.80ex\hbox{\char95\kern-0.53ex\char'47}\kern.13em}
\def\EES{{\accent"5E E}\kern-.5em\raise.8ex\hbox{\char'23 }}
\def\EEX{{\accent"5E E}\kern-.60em\raise.9ex\hbox{\char'176}\kern.1em}
\def\ow{o\kern-.42em\raise.82ex\hbox{
  \vrule width .12em height .0ex depth .075ex \kern-0.16em \char'56}\kern-.07em}
\def\OW{O\kern-.460em\raise1.36ex\hbox{
\vrule width .13em height .0ex depth .075ex \kern-0.16em \char'56}\kern-.07em}
\def\uw{u\kern-.42em\raise.82ex\hbox{
  \vrule width .12em height .0ex depth .075ex \kern-0.16em \char'56}\kern-.07em}
\def\UW{U\kern-.42em\raise1.55ex\hbox{
\vrule width .13em height .0ex depth .075ex \kern-0.16em \char'56}\kern-.07em}
\def\DD{D\kern-.7em\raise0.4ex\hbox{\char '55}\kern.33em}
\def\OOH{{\accent"5E O}\kern-.78em\raise.8ex\hbox{\char'22}\kern.28em}
\def\UWS{\' \UW }
\title{Limits of real bivariate rational functions}
\author{S\~i Ti\d{\^e}p \DD inh$^1$}
\address{Institute of Mathematics, VAST, 18, Hoang Quoc Viet Road, Cau Giay District 10307, Hanoi, Vietnam}
\email{dstiep@math.ac.vn}
\author{Feng Guo$^2$}
\address{School of Mathematical Sciences, Dalian  University of Technology, Dalian, 116024, China}
\email{fguo@dlut.edu.cn}
\author{H\OOH NG \DD\UWS C NGUY\EEX N}
\address{TIMAS, Thang Long University, Nghiem Xuan Yem, Hanoi, Vietnam}
\address{Institute of Mathematics, VAST, 18, Hoang Quoc Viet Road, Cau Giay District 10307, Hanoi, Vietnam}
\email{nhduc82@gmail.com}
\author{TI\EES N-S\OW N PH\d{A}M$^4$}
\address{Department of Mathematics, Dalat University, 1 Phu Dong Thien Vuong, Dalat, Vietnam}
\email{sonpt@dlu.edu.vn}
\thanks{$^{1, 3, 4}$These authors are funded by International Centre for Research and Postgraduate Training in Mathematics (ICRTM) under grant number ICRTM04$\_$2021.04}
\thanks{$^{2}$Feng Guo was supported by the Chinese National Natural Science Foundation under grant 11571350, the Fundamental Research
Funds for the Central Universities.}
\date{\today}
\subjclass[2010]{14P10~$\cdot$~68W30~$\cdot$~32C05~$\cdot$~32B20~$\cdot$~26C15}
\keywords{limit, rational function, Puiseux expansions, semi-algebraic, tangencies}
\begin{document}

\begin{abstract}
Given two nonzero polynomials $f, g \in\mathbb R[x,y]$ and a point $(a, b) \in \mathbb{R}^2,$ we give some necessary and sufficient conditions for the existence of the limit $\displaystyle \lim_{(x, y) \to (a, b)} \frac{f(x, y)}{g(x, y)}.$ 
We also show that, if the denominator $g$ has an isolated zero at the given point $(a, b),$ then the set of possible limits of $\displaystyle  \lim_{(x, y) \to (a, b)} \frac{f(x, y)}{g(x, y)}$ is a closed interval in $\overline{\mathbb{R}}$ and can be explicitly determined.  As an application, we propose an effective algorithm to verify the existence of the limit and compute the limit (if it exists). Our approach is geometric and is based on Puiseux expansions.
\end{abstract}
\maketitle

\section{Introduction}

Computing limits of (real) multivariate functions at given points is one of the basic problems in computational calculus. Unlike the univariate case which has been well studied \cite{Gruntz1996,Salvy1999}, computing limits of multivariate rational functions is a nontrivial problem and is an active research area even in the bivariate case, see \cite{Alvandi2016,Cadavid2013,Strzebonski2018,Strzebonski2021,Velez2017,Xiao2014,Xiao2015,Zeng2020}.

Cadavid, Molina and V{\'e}lez~\cite{Cadavid2013} proposed an algorithm, now available in {\scshape Maple} as the {\sf limit/multi} command,
for determining the existence and possible value of limits of bivariate rational functions. This approach is extended by V{\'e}lez, Hern{\'a}ndez and Cadavid in \cite{Velez2017} to rational functions in three variables and by Alvandi, Kazemi and Maza \cite{Alvandi2016} to multivariate rational functions. These three results  require the given point to be an isolated zero of the denominator and rely on the key observation that, for determining the existence of limits of rational functions, it suffices to study limits along the so--called {\em discriminant variety.}
Also note that, with the aid of the theory of real valuations, Xiao, X.~Zeng and G.~Zeng~\cite[Theorem~3.3]{Xiao2015} obtained necessary and sufficient conditions for the non existence of limits of multivariate rational functions by testing on some polynomial curves; see also Remark~\ref{Remark417} below.

Xiao and Zeng~\cite{Xiao2014} presented an algorithm for determining the limits of multivariate rational functions without any assumption on both the number of variables and the denominators. Their method phrases the limiting problem as a quantifier elimination problem and solves it using triangular decomposition of algebraic systems, rational univariate representation and adjoining infinitesimal elements to the base field. Recently, this approach is improved by Zeng and Xiao~\cite{Zeng2020}, who provided an algorithm for determining the limits of real rational functions in two variables, based on Sturm's theorem and the general Sturm--Tarski theorem for counting certain roots of univariate polynomials in a real closed field.

In \cite{Strzebonski2018}, Strzebo{\'n}ski presented five methods for computing limits of multivariate rational functions by writing the problem as
different quantified polynomial formulations. The methods, which are based on the cylindrical algebraic decomposition algorithm, do not require any assumptions about the rational function and compute the lower limit and the upper limit. Very recently, Strzebo{\'n}ski \cite{Strzebonski2021} 
showed algorithmically that, if the denominator has an isolated zero at a given point, then the problem of computing limits of quotients of real analytic functions can be reduced to the case of rational functions.

This paper addresses to the problem of computing limits of real bivariate rational functions using Puiseux expansions. 
Precisely, let $f, g \in\mathbb R[x,y]$ be nonzero polynomials and $(a, b) \in \mathbb{R}^2.$ We are interested in the problem of deciding whether the following limit exists or not:
\begin{equation*}
\lim_{(x, y) \to (a, b)}\frac{f(x, y)}{g(x, y)}.
\end{equation*}

It is well known that the limit, if it exists, is easily computed (see, for example Lemma~\ref{Lemma41} below). Furthermore, the limit exists and equals to $L \in \mathbb{R}$ if and only if $\mathcal{R}_{f, g} = \{L\},$ where $\mathcal{R}_{f, g}$ stands for the {\em (numerical) range} of limits, which is defined as follows:
$$\mathcal{R}_{f, g} :=\left\{
L \in \overline{\mathbb{R}} : \ \text{there is a sequence } (x_k, y_k)\to(a, b) \ \text{ s.t. } \displaystyle\lim_{(x_k, y_k) \to (a, b)}\frac{f(x_k, y_k)}{g(x_k, y_k)}=L \right\}.$$

We shall present some necessary and sufficient conditions for the existence of the limit. We also shown that if the denominator $g$ has an isolated zero at the point $(a, b),$ then the range $\mathcal{R}_{f, g}$ is a closed interval in $\overline{\mathbb{R}}$ and can be explicitly determined. As an application, we propose an effective algorithm to verify the existence of the limit and compute the limit (if it exists) in the case where $f$ and $g$ are polynomials with rational coefficients. Instead of rational numbers we could use any computable subfield ${K}$ of $\mathbb{R},$ as long as polynomials with coefficients in $K$ are accepted by the Newton--Puiseux algorithm.


Similarly to Cadavid, Molina and V{\'e}lez~\cite{Cadavid2013}, our approach is geometric and is based on Puiseux expansions. Indeed, in order to verify the existence of the limit, it suffices to study, locally around $(a, b),$ the behavior of $f$ along the half-branches of the curve $g = 0$  as well as the behavior of $f$ and $g$ along the half-branches of the {\em discriminant curve} (also known as {\em tangency curve}, see \cite{HaHV2017}) $G_{f, g}  = 0,$ where
\begin{eqnarray*}
G_{f, g} 
&:=& (y - b) \left(g\frac{\partial f}{\partial x}- f\frac{\partial g}{\partial x}\right) - (x - a)\left(g\frac{\partial f}{\partial y} - f\frac{\partial g}{\partial y}\right).
\end{eqnarray*}

In the case where $g$ has an isolated zero at $(a, b),$ the range $\mathcal{R}_{f, g}$ is computed and so the limit (if it exists) is determined by checking, locally around $(a, b),$ the behavior of $f$ and $g$ along the half-branches of the {\em critical curve} $F_{f, g}  = 0,$ where
$$F_{f, g} :=\frac{\partial f}{\partial x}\frac{\partial g}{\partial y}-\frac{\partial f}{\partial y}\frac{\partial g}{\partial x}.$$

Observe that, in general, $\deg F_{f, g} < \deg G_{f, g}$ and the number of monomials of $F_{f, g}$ is smaller than that of $G_{f, g},$
so it seems that our algorithm is simpler and more efficient than that of Cadavid, Molina and V{\'e}lez~\cite{Cadavid2013}.

Canceling common factors of $f$ and $g$ does not affect the limit, hence without loss of generality we may assume that $f$ and $g$ 
have no non-constant common divisor in $\mathbb{R}[x, y].$ Under this assumption, algorithms for determining the limit are proposed by Strzebo{\'n}ski \cite{Strzebonski2018} and by Zeng and Xiao \cite{Zeng2020}. Note that, as shown in Lemma~\ref{Lemma41} below, if the limit exists, then $g$ must have an isolated zero at $(a, b).$ 

The rest of this paper is organized as follows. Section~\ref{SectionPreliminary} contains some preliminaries from semi-algebraic geometry which will be used later. On one hand, we will use Puiseux expansions to study the local behavior of functions along half-branches of curves,
on the other hand, we would like that our algorithm could be implemented in a computer algebra system, so it is imperative to work with truncated Newton--Puiseux roots. To do this, in Section~\ref{Section3}, we make sure that the conditions guaranteeing the existence of of limits of bivariate rational functions can be expressed by using truncated Newton--Puiseux roots. In Section~\ref{Section4}, some necessary and sufficient conditions for the existence of limits of bivariate rational functions is given. A formula for ranges and the main algorithm of the paper are provided in Section~\ref{Section5}. We also implement our algorithm in {\scshape Maple} and examine some examples. Note that, in Example~(20), the {\sf limit} command in {\scshape Maple} 2021 returns a wrong range.
Furthermore, our algorithm can do better than that of {\scshape Maple} 2021 in computing ranges (see Examples~(18)--(21)).
Computing ranges in the case of non isolated zero of the denominator would be object of our future work.

\section{Preliminaries} \label{SectionPreliminary}
Throughout this work we shall consider the Euclidean vector space ${\mathbb R}^n$ endowed with its canonical scalar product $\langle \cdot, \cdot \rangle,$ and we shall denote its associated norm by $\| \cdot \|.$  The closed ball (resp., the sphere) centered at ${x} \in \mathbb{R}^n$ of radius $r$ will be denoted by $\mathbb{B}_{r}({x})$ (resp., $\mathbb{S}_{r}({x})$). When ${x}$ is the origin $0$ of $\mathbb{R}^n$ we write $\mathbb{B}_{r}$ and $\mathbb{S}_{r}$ instead of $\mathbb{B}_{r}(0)$ and $\mathbb{S}_{r}(0)$ respectively.  Set $\overline{\mathbb R}:=\mathbb R\cup\{\pm\infty\}.$

Now, we recall some notions and results of semi-algebraic geometry, which can be found in \cite{Bochnak1998, HaHV2017, Dries1996}.

\begin{definition}{\rm
A subset $S$ of $\mathbb{R}^n$ is called {\em semi-algebraic} if it is a finite union of sets of the form
$$\{x \in \mathbb{R}^n \ | \  f_i(x) = 0, \ i = 1, \ldots, k;\ f_i(x) > 0, \ i = k + 1, \ldots, p\},$$
where all $f_{i}$ are polynomials.
In other words, $S$ is a union of finitely many sets, each defined by finitely many polynomial equalities and inequalities.
A function $f \colon S \rightarrow {\mathbb{R}}$ is said to be {\em semi-algebraic} if its graph
\begin{eqnarray*}
\{ (x, y) \in S \times \mathbb{R} \ | \ y =  f(x) \}
\end{eqnarray*}
is a semi-algebraic set.
}\end{definition}

A major fact concerning the class of semi-algebraic sets is its stability under linear projections.

\begin{theorem}[Tarski--Seidenberg theorem] \label{TarskiSeidenbergTheorem}
The image of any semi-algebraic set $S \subset \mathbb{R}^n$ under a projection to any linear subspace of $\mathbb{R}^n$ is a semi-algebraic set.
\end{theorem}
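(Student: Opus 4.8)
The plan is to prove this along the classical lines: eliminate one variable at a time, reducing everything to a uniform statement about the sign diagrams of a family of univariate polynomials depending on parameters. After a linear change of coordinates in $\mathbb{R}^n$ (which preserves semi-algebraicity, being given by polynomial substitutions), any linear projection onto a subspace becomes, upon identifying the target with $\mathbb{R}^d$, the coordinate map $(x_1,\dots,x_n)\mapsto(x_1,\dots,x_d)$, and this is a composition of the projections $\mathbb{R}^k\to\mathbb{R}^{k-1}$ forgetting the last coordinate. So it suffices to show that $\pi(S)\subset\mathbb{R}^{n-1}$ is semi-algebraic whenever $S\subset\mathbb{R}^{n-1}\times\mathbb{R}$ is semi-algebraic and $\pi(x,t)=x$. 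Writing $S$ as a finite union of sets each defined by finitely many conditions $P_i(x,t)=0$ or $P_i(x,t)>0$ and using $\pi(\bigcup_j S_j)=\bigcup_j\pi(S_j)$, we may assume
$$S=\bigl\{(x,t)\in\mathbb{R}^{n-1}\times\mathbb{R}:\ P_1=\cdots=P_r=0,\ P_{r+1}>0,\dots,P_s>0\bigr\},$$
with $P_1,\dots,P_s\in\mathbb{R}[x_1,\dots,x_{n-1}][t]$.

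The core assertion is then the existence of a finite partition $\mathbb{R}^{n-1}=A_1\sqcup\cdots\sqcup A_N$ into semi-algebraic sets, each $A_\ell$ cut out by sign conditions on finitely many explicit polynomials in $x$, such that the combinatorial type of the family $\bigl(P_1(x,\cdot),\dots,P_s(x,\cdot)\bigr)$ is constant on each $A_\ell$: the number of real roots of every $P_i(x,\cdot)$, the order in which all these roots interlace, and the sign of every $P_i(x,\cdot)$ on each resulting open interval are independent of $x\in A_\ell$. Granting this, for each $\ell$ the existence of a $t$ with $(x,t)\in S$ holds either for all $x\in A_\ell$ or for none, so $\pi(S)\cap A_\ell$ is either $A_\ell$ or $\varnothing$, and $\pi(S)=\bigcup_\ell\bigl(\pi(S)\cap A_\ell\bigr)$ is semi-algebraic.

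To construct the partition I would induct on $\sum_i\deg_t P_i$. If every $P_i$ has $t$-degree $0$, the conditions do not involve $t$ and there is nothing to prove. Otherwise, adjoin to the family the leading coefficients (in $t$) of the $P_i$ and split $\mathbb{R}^{n-1}$ according to their signs: on the locus where a leading coefficient vanishes the corresponding $P_i$ drops in degree, while on the locus where it is nonzero one uses pseudo-Euclidean division --- multiplying by a controlled power of that leading coefficient so as not to disturb signs --- to replace each remaining $P_j$ by the pseudo-remainder of its division by $P_i$, of smaller $t$-degree. One carries along simultaneously the derivatives $\partial P_i/\partial t$ and the iterated pseudo-remainder (subresultant) sequences, since it is exactly this data, which specializes well under $x\mapsto x_0$, that governs how many real roots of $P_i(x,\cdot)$ lie in a given interval and what the signs are there; Thom's lemma --- the sign vector of $P_i,\partial P_i/\partial t,\partial^2 P_i/\partial t^2,\dots$ cuts $\mathbb{R}$ into connected pieces --- packages this bookkeeping and lets one locate each root by a sign condition on polynomials in $x$.

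The step I expect to be the main obstacle is precisely this inductive construction: keeping the collection of auxiliary polynomials (leading coefficients, derivatives, subresultants) finite, and checking that every condition introduced along the induction is genuinely polynomial in $x$ and behaves correctly under specialization. This is the real content of the Tarski--Seidenberg theorem, and it is where the subresultant/Sturm machinery developed in \cite{Bochnak1998} does the work; once it is available, the projection statement follows formally. (An alternative is to quote the existence of a cylindrical algebraic decomposition adapted to $S$ and project it cell by cell, but that construction rests on the same machinery.)
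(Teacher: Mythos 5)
The paper does not prove this theorem at all: it is stated as a classical fact in the preliminaries, with the proof delegated to the cited references \cite{Bochnak1998, HaHV2017, Dries1996}. So there is no in-paper argument to compare yours against; what you have written is an outline of the standard elimination proof (essentially the Tarski/Cohen--H\"ormander argument as presented in \cite{Bochnak1998}). Your reductions are correct and complete: a linear projection becomes a coordinate projection after an invertible linear change of variables, which preserves semi-algebraicity; iterating reduces to eliminating a single variable; and taking images commutes with finite unions, so one may assume $S$ is a single basic set. The constancy of the sign diagram on each $A_\ell$ (root counts, interlacing order, signs on the complementary intervals) does indeed determine, uniformly in $x\in A_\ell$, whether some $t$ realizes the prescribed sign vector, so the conclusion follows once the partition exists. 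The genuine issue is that the ``core assertion'' you isolate \emph{is} the Tarski--Seidenberg theorem in combinatorial disguise, and your final paragraph only names the machinery that proves it (leading coefficients, pseudo-remainders, subresultant sequences, Thom's lemma) without carrying out the induction --- as you yourself acknowledge. Two points of care if you do complete it: Thom's lemma requires the polynomial family to be closed under $\partial/\partial t$, so you must saturate the family before inducting and verify that this closure survives the pseudo-remainder replacements; and the case distinctions on vanishing leading coefficients must be folded into the semi-algebraic description of the $A_\ell$ themselves. As submitted, this is a correct and well-organized proof plan whose decisive step is deferred to the literature --- which is, in fairness, exactly what the paper itself does.
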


The following well-known results will be of great importance for us.

\begin{lemma} [monotonicity lemma] \label{MonotonicityLemma}
Let $f \colon (a, b) \rightarrow \mathbb{R}$ be a semi-algebraic function. Then there are finitely many points $a = t_0 < t_1 < \cdots < t_k = b$ such that the restriction of $f$ to each interval $(t_i, t_{i + 1})$ is analytic, and either constant, or strictly increasing or strictly decreasing.
\end{lemma}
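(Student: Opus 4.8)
The plan is to reduce the statement to two ingredients: that a semi‑algebraic function of one variable is piecewise analytic, and an elementary argument controlling the sign of its derivative on each analytic piece.

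First I would produce a finite partition $a = s_0 < s_1 < \cdots < s_m = b$ such that the restriction of $f$ to each open interval $(s_j, s_{j+1})$ is analytic. This is where the substantive input lies: applying the cell decomposition theorem (a standard consequence of the Tarski--Seidenberg theorem, Theorem~\ref{TarskiSeidenbergTheorem}) to the graph $\Gamma$ of $f$, a semi‑algebraic subset of $\mathbb{R}^2$, one obtains a decomposition of $\mathbb{R}^2$ into finitely many cells compatible with $\Gamma$; since $\Gamma$ projects bijectively onto $(a,b)$, the cells contained in $\Gamma$ are either isolated points or graphs of analytic semi‑algebraic functions over open subintervals, and the finitely many base points and endpoints of these cells furnish the partition. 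Alternatively, one may argue directly from the fact that $\Gamma$, having dimension one, is contained in an algebraic curve $\{P=0\}$ with $P \in \mathbb{R}[x,y]$ squarefree, and use the implicit function theorem away from the finitely many critical points of this curve and from the finitely many points of $\Gamma$ lying over its vertical components.

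Next, fix one of the subintervals $(s_j, s_{j+1})$. The derivative $f'$ is again a semi‑algebraic function there (by Theorem~\ref{TarskiSeidenbergTheorem}), so its zero set $Z := \{t \in (s_j, s_{j+1}) : f'(t) = 0\}$ is a semi‑algebraic subset of an interval, hence a finite union of points and open subintervals. If $Z$ contains a subinterval, then by analyticity $f' \equiv 0$ on all of $(s_j, s_{j+1})$ and $f$ is constant there; otherwise $Z$ is finite, and I would adjoin its points to the partition. On each of the finitely many open subintervals that remain, $f'$ is continuous and nowhere zero, hence of constant sign by the intermediate value theorem, so $f$ is strictly increasing or strictly decreasing there.

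Collecting all of the partition points produced above yields the desired $a = t_0 < t_1 < \cdots < t_k = b$. I expect the first step — the piecewise analyticity of a one‑variable semi‑algebraic function — to be the only genuine obstacle; granting it, the monotonicity conclusion follows routinely from the dichotomy that a semi‑algebraic subset of an interval is either finite or contains a subinterval, together with the intermediate value theorem.
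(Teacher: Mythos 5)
The paper does not prove this lemma at all: it is stated in Section~\ref{SectionPreliminary} as a well-known fact and attributed to the standard references on semi-algebraic geometry, so there is no internal proof to compare yours against. On its own terms, your argument is correct and is essentially the textbook derivation: a cell decomposition of the graph gives piecewise analyticity, and on each analytic piece the dichotomy for the semi-algebraic set $\{f'=0\}$ --- either finite, or containing an interval and hence, by the identity theorem for analytic functions, equal to the whole piece --- yields constancy or, after refining the partition by the finitely many zeros of $f'$, strict monotonicity via the intermediate value theorem applied to the sign of $f'$. Two small points are worth tightening. First, the cell decomposition theorem is most often stated with $\mathcal{C}^k$ cells for a fixed finite $k$; to legitimately claim \emph{analytic} cells you should invoke the Nash (semi-algebraic analytic) cell decomposition, or else rely on your alternative route through the Zariski closure. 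Second, in that alternative route, knowing that the graph $\Gamma$ lies in an algebraic curve $\{P=0\}$ which, over the complement of finitely many base points, is a disjoint union of analytic graphs $y=\varphi_i(x)$ does not by itself rule out $f$ switching between branches infinitely often; you should add that each set $\{x : f(x)=\varphi_i(x)\}$ is semi-algebraic, hence a finite union of points and intervals, so only finitely many additional partition points are needed. Neither remark affects the soundness of the overall plan, and the second step (deriving monotonicity from piecewise analyticity) is complete as written.
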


\begin{lemma}[semi-algebraic choice] \label{LemmaSAChoice}
Let $S$ be a semi-algebraic subset of $\mathbb{R}^m \times \mathbb{R}^n.$ Denote by $\pi \colon \mathbb{R}^m \times \mathbb{R}^n \to \mathbb{R}^m$ the projection on the first $m$ coordinates. Then there is a semi-algebraic map $f \colon \pi(S) \to \mathbb{R}^n$ such that $(t, f(t)) \in S$ for all $t \in \pi(S).$ 
\end{lemma}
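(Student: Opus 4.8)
The plan is to argue by induction on $n$, peeling off the last coordinate at each step so that the whole statement reduces to the case $n=1$; in that base case I would select a canonical point from every fibre by a fixed rule and then use the Tarski--Seidenberg theorem (Theorem~\ref{TarskiSeidenbergTheorem}) --- more precisely, its standard consequence that any subset of $\mathbb R^k$ defined by a first-order formula over the ordered field $\mathbb R$ is semi-algebraic --- to check that the resulting map is semi-algebraic.

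For the inductive step, write $\mathbb R^m\times\mathbb R^n=\mathbb R^{m+n-1}\times\mathbb R$ and let $\pi'\colon\mathbb R^{m+n-1}\times\mathbb R\to\mathbb R^{m+n-1}$ be the projection forgetting the last coordinate. Applying the case $n=1$ to $S$ with base space $\mathbb R^{m+n-1}$ produces a semi-algebraic map $h\colon\pi'(S)\to\mathbb R$ with $(z,h(z))\in S$ for all $z\in\pi'(S)$. Now $\pi'(S)$ is a semi-algebraic subset of $\mathbb R^m\times\mathbb R^{n-1}$ whose image under the projection onto $\mathbb R^m$ is again $\pi(S)$, so the induction hypothesis yields a semi-algebraic map $g\colon\pi(S)\to\mathbb R^{n-1}$ with $(t,g(t))\in\pi'(S)$ for all $t\in\pi(S)$. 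Then $f(t):=\bigl(g(t),\,h(t,g(t))\bigr)$ works: it is semi-algebraic, being obtained from $g$ and $h$ by pairing and composition, and $(t,f(t))=(t,g(t),h(t,g(t)))\in S$ by construction. Thus only the case $n=1$ remains.

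For $n=1$, for each $t\in\pi(S)$ the fibre $S_t:=\{y\in\mathbb R:(t,y)\in S\}$ is a nonempty semi-algebraic subset of $\mathbb R$, hence a union of finitely many points and open intervals (a standard consequence of Theorem~\ref{TarskiSeidenbergTheorem}). I would then define $f(t)\in S_t$ by a fixed recipe: if $S_t$ has a least element, put $f(t):=\min S_t$; otherwise $\inf S_t\notin S_t$, the left-most ``piece'' of $S_t$ is an open interval $(\alpha(t),\beta(t))$ whose endpoints are definable from $t$ (with $\alpha(t)=\inf S_t\in[-\infty,+\infty)$ and $\beta(t)$ the supremum of those $s$ with $(\alpha(t),s)\subseteq S_t$), and I would set $f(t):=\tfrac12(\alpha(t)+\beta(t))$ when both endpoints are finite, $f(t):=\alpha(t)+1$ or $f(t):=\beta(t)-1$ when exactly one of them is finite, and $f(t):=0$ when $S_t=\mathbb R$. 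In every case the defining condition and the value $f(t)$ are given by a first-order formula in the language of ordered fields with $S$ as a parameter, so the graph of $f$ is a finite union of subsets of $\mathbb R^m\times\mathbb R$ each defined by such a formula; by the consequence of Theorem~\ref{TarskiSeidenbergTheorem} quoted above each of these is semi-algebraic, hence so is the graph of $f$. Since $f(t)\in S_t$ always, $(t,f(t))\in S$ for all $t\in\pi(S)$, as required.

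The only genuinely delicate point is this base case: the map must be \emph{single-valued} and its definition uniform in $t$, and the sole real obstruction to writing it down directly is that an open interval carries no distinguished point --- this is exactly what forces the small split into cases and the midpoint/endpoint-shift device, with the finiteness alternatives taking care of whether the relevant endpoints lie in $\mathbb R$. Everything else is bookkeeping together with repeated appeals to the Tarski--Seidenberg theorem.
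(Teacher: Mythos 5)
Your proof is correct. The paper states this lemma without proof as a well-known fact (citing \cite{Bochnak1998, Dries1996}), and your argument --- induction on $n$ peeling off one coordinate, then in the fibre-dimension-one base case selecting $\min S_t$ when it exists and otherwise the midpoint (or an endpoint shifted by $1$) of the leftmost interval, with semi-algebraicity of the graph coming from first-order definability and Tarski--Seidenberg --- is exactly the standard proof found in those references (e.g.\ Bochnak--Coste--Roy, Prop.~2.2.7); the one cosmetic imprecision, namely that the leftmost component of $S_t$ need not be literally an open interval, is harmless because your definition of $\beta(t)$ as $\sup\{s : (\alpha(t),s)\subseteq S_t\}$ still guarantees $(\alpha(t),\beta(t))\subseteq S_t$ and $\alpha(t)<\beta(t)$.
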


\section{Truncated Newton--Puiseux roots} \label{Section3}

In this section we define truncated Newton--Puiseux roots and establish some preliminary results that will be used in the sequel.
Let $\mathbb{C}\{x, y\}$ and $\mathbb{R}[x, y],$ respectively, stand for the {\em ring of convergent power series} and {\em the ring of real polynomials} in variables $x$ and $y.$

Let $f \in \mathbb{C}\{x, y\}.$ If $f \not\equiv 0,$ we can write
$$f = f_m + f_{m+1} + \cdots,$$
where $f_m\not\equiv 0$ and for each $k \ge m$, either $f_k \equiv 0$ or $f_k$ is a homogeneous polynomial of degree $k;$ then the {\em order} of $f$ is defined by $\ord f \ :=\ m.$ If $f \equiv 0,$ it is convenient to set $\ord f := +\infty.$ 

We say that $f$ is {\em $y$-regular (of order $m$)} if $\ord f = m$ and $f_m(0, 1) \ne 0.$ 
By a linear change of coordinates, we can transform $f$ into a $y$-regular function of the same order as follows.

\begin{lemma}\label{Lemma31}
Let $c \in \mathbb{C}.$ Then the function $\widetilde{f} (x, y) := f(x + cy, y)$ is 
$y$-regular of order $m$ if and only if $f_m(c, 1) \ne 0.$
\end{lemma}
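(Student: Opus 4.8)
The plan is to reduce the assertion to an elementary computation with the lowest-degree homogeneous part of $f$. Write $f = f_m + f_{m+1} + \cdots$ with $f_m \not\equiv 0$ homogeneous of degree $m$, so that $\ord f = m$. The key observation is that substitution $x \mapsto x + cy$ is a linear change of variables, hence it sends a homogeneous polynomial of degree $k$ to a homogeneous polynomial of degree $k$ (possibly zero, but here never zero for $k = m$ as we will see), and it does not mix different degrees. Therefore the lowest-degree part of $\widetilde f(x,y) = f(x+cy, y)$ is exactly $\widetilde{f_m}(x,y) := f_m(x + cy, y)$, provided $\widetilde{f_m} \not\equiv 0$.

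First I would check that $\widetilde{f_m} \not\equiv 0$. Since $f_m$ is a nonzero polynomial and the substitution $(x,y) \mapsto (x + cy, y)$ is an invertible linear map of $\mathbb{C}^2$, the composition $f_m(x+cy,y)$ is again a nonzero polynomial; it is homogeneous of degree $m$. Hence $\ord \widetilde f = m$ as well, and this holds regardless of the value of $c$. So the order is preserved automatically, and the only content of the lemma is the $y$-regularity condition.

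Next, $\widetilde f$ is $y$-regular of order $m$ precisely when $\widetilde{f_m}(0,1) \neq 0$. By the definition of $\widetilde{f_m}$ we have
\[
\widetilde{f_m}(0, 1) \ = \ f_m(0 + c\cdot 1,\ 1) \ = \ f_m(c, 1).
\]
Therefore $\widetilde f$ is $y$-regular of order $m$ if and only if $f_m(c,1) \neq 0$, which is exactly the claim. The argument has no real obstacle; the only point requiring a line of care is the claim that the substitution preserves the degree-$m$ part and does not produce lower-degree terms, which is immediate from linearity and homogeneity of each $f_k$. One may also note for later use that $f$ itself being $y$-regular of order $m$ corresponds to the case $c = 0$, i.e. $f_m(0,1) \neq 0$, so the lemma says one can always achieve $y$-regularity by choosing $c$ outside the finite set of roots of the univariate polynomial $t \mapsto f_m(t, 1)$ (whose degree is at most $m$, and which is not identically zero since $f_m \not\equiv 0$).
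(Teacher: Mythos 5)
Your proof is correct and follows essentially the same route as the paper, which simply invokes the fact that the lowest-degree homogeneous component of $\widetilde f$ is $f_m(x+cy,y)$; you spell out why this holds (linearity and invertibility of the substitution, hence no mixing of degrees and no vanishing of the degree-$m$ part) and then evaluate at $(0,1)$. No gaps.
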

\begin{proof}
This is a direct consequence of the fact that the homogeneous polynomial of the lowest degree of $\widetilde{f}$ is $f_m(x + cy, y).$
\end{proof}

Let $\varphi\not\equiv 0$ be a {\em Puiseux series} of the following form
$$\varphi(x) = c_1 x^{\frac{n_1}{n}} + c_2 x^{\frac{n_2}{n}} + \cdots,$$
where $c_k \in \mathbb{C}$ and $n \leqslant n_1 < n_2 < \cdots$ are positive integers, having no common divisor, such that the power series $\varphi(t^n)$ has positive radius of convergence. 
The {\em order} of $\varphi$ is defined by 
$$\ord \varphi \ :=\ \frac{n_1}{n}.$$
If $\varphi\equiv 0,$ set 
$\ord \varphi := +\infty.$

The series $\varphi$ is {\em real} if all $c_k$ are real. 
Define for $N > 0,$
\begin{eqnarray*}
\varphi \mod x^N &:=& \sum_{\frac{n_k}{n} < N} c_k x^{\frac{n_k}{n}},
\end{eqnarray*}
which is the sum of the terms of $\varphi$ of exponents strictly less than $N.$ 
The series $\varphi$ is called a {\em Newton--Puiseux root} of $f$ if $f(x, \varphi(x)) \equiv 0.$ 

Assume that $f$ is $y$-regular and let
\begin{eqnarray*}
\mathbf{P}(f) &:=& \{\varphi:\ \varphi \text{ is a Newton--Puiseux root of } f\}.
\end{eqnarray*}
In view of Weierstrass's preparation theorem and Puiseux's theorem (see, for example, \cite[The Puiseux theorem, page~170]{Lojasiewicz1991} or \cite[page~98]{Walker1950}), we can write
$$f=u(x,y)\prod_{\varphi\in \mathbf{P}(f)}(y-\varphi(x))^{\mult_f(\varphi)},$$
where $u \in \mathbb{C}\{x, y\}$ with $u(0, 0) \ne 0$ and $\mult_f(\varphi)$ is the {\em multiplicity} of $\varphi.$
In order to provide criteria for the existence of limits of bivariate rational functions, 
some preparations on truncated Newton--Puiseux roots are needed. 

For an integer $N > 0,$ the {\em set of Newton--Puiseux roots of $f$ truncated from $N$} is defined by
\begin{eqnarray*}
\mathbf{P}_N(f) &:=& \{\varphi \mod x^N :\ \varphi \in \mathbf{P}(f)\}.
\end{eqnarray*}

According to the latest version of Maple, the software can only compute (truncated) Puiseux roots of bivariate square-free polynomials.
(Recall that a polynomial is {\em square-free} if it does not have multiple factors.) The following theorem provides a method to compute 
truncated Newton--Puiseux roots of polynomials not necessarily square-free.

\begin{theorem}\label{Theorem32}
Let $f \in \mathbb{R}[x, y]$ be $y$-regular of order $m > 0$ and let $N$ be an integer such that $m N > \deg f.$ Then for any $c \ne 0,$ the polynomial
$$\widetilde f(x, y): = f(x, y) + c(x^{m N} + y^{m N})$$
is square-free and $\mathbf{P}_N(f) = \mathbf{P}_N(\widetilde f).$
\end{theorem}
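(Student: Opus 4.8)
The plan is to establish two claims separately: first that $\widetilde f$ is square-free for every $c \neq 0$, and second that truncating at level $N$ destroys all information coming from the perturbation term, so that $\mathbf{P}_N(f) = \mathbf{P}_N(\widetilde f)$. Throughout, the hypothesis $mN > \deg f$ is what couples the two parts, so I would keep an eye on how it is used in each.

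For the square-free claim, I would argue by contradiction: suppose $\widetilde f = h^2 k$ for some non-unit $h \in \mathbb{C}\{x,y\}$ (or work in $\mathbb{C}[x,y]$ directly, since $\widetilde f$ is a polynomial and a polynomial factor of positive degree would have to appear). The idea is that a repeated factor forces $\widetilde f$ and $\partial \widetilde f/\partial y$ (say) to share a common branch, i.e. a common Newton--Puiseux root $\varphi$. Now $\partial \widetilde f/\partial y = \partial f/\partial y + cmN\, y^{mN-1}$, and since $f$ is $y$-regular of order $m$, the order of $\partial f/\partial y$ is $m-1$, which is strictly less than $mN - 1$ because $mN > \deg f \geq m > m - 1$... more carefully, $mN - 1 \geq m > m-1$ since $N \geq 2$ (as $mN > \deg f \geq m$ forces $N \geq 2$ when $m \geq 1$). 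Hence $\ord(\partial \widetilde f/\partial y) = m - 1$, so any common root $\varphi$ of $\widetilde f$ and $\partial\widetilde f/\partial y$ has $\ord \varphi \geq 1$ (plug in and compare lowest-order terms, using $y$-regularity of the leading form). Evaluating $\widetilde f(x,\varphi(x)) \equiv 0$ then gives $f(x,\varphi(x)) = -c(x^{mN} + \varphi(x)^{mN})$; the right-hand side has order $\geq mN > \deg f \geq \ord f(x,\varphi(x))$ unless $f(x,\varphi(x)) \equiv 0$ — but $\ord f(x,\varphi(x)) \leq \deg_y f \cdot \ord\varphi + \deg_x f \leq \deg f$ whenever $f(x,\varphi(x)) \not\equiv 0$, giving a contradiction; and if $f(x,\varphi(x)) \equiv 0$ then $x^{mN} + \varphi(x)^{mN} \equiv 0$, which is impossible for a genuine Puiseux series $\varphi$ (compare orders: $mN = mN\cdot\ord\varphi$ would force $\ord\varphi = 1$ and then cancellation of the two terms $x^{mN}$ and $c_1^{mN}x^{mN}$ needs $c_1^{mN} = -1$, but then the next term does not cancel — one has to rule this out by a short order-of-vanishing computation). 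This shows no repeated factor exists.

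For the equality $\mathbf{P}_N(f) = \mathbf{P}_N(\widetilde f)$, the key observation is that $f$ and $\widetilde f$ have the same Newton--Puiseux roots \emph{up to order} $N$. Concretely, if $\varphi$ is a Newton--Puiseux root of $f$, then $\ord\varphi \geq \ord f/\ord_y(\text{leading form}) \geq \ldots$; in any case one shows (via the Newton polygon / Newton--Puiseux algorithm) that the coefficients $c_k$ of $\varphi$ with $n_k/n < N$ are determined by the terms of $f$ of degree $\leq \deg f < mN$, and the perturbation $c(x^{mN}+y^{mN})$ affects only the terms of the Newton--Puiseux algorithm at stages where the exponent has already reached $mN$ — far beyond $N$, since the running exponent grows at rate at least... here one uses that along a root of a $y$-regular polynomial the relevant exponents only reach $mN$ after the truncation level $N$ has been passed, precisely because $mN > \deg f$ controls when the added monomials $x^{mN}, y^{mN}$ can first enter the Newton polygon. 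I would make this rigorous by running the Newton--Puiseux algorithm in parallel on $f$ and on $\widetilde f$ and showing the two runs agree until the partial root has order $\geq N$, at which point the computed truncations $\varphi \bmod x^N$ already coincide; conversely every root of $\widetilde f$ truncates to a root of $f$ by the symmetric argument. A cleaner alternative: show directly that for a Puiseux series $\psi$ of order $<N$, $\ord f(x,\psi(x)) < mN \leq \ord[\widetilde f(x,\psi(x)) - f(x,\psi(x))]$ whenever $f(x,\psi(x)) \not\equiv 0$, so the lowest-order term of $f(x,\psi(x))$ equals that of $\widetilde f(x,\psi(x))$; combined with the factorizations $f = u\prod(y-\varphi)^{\mult}$ this lets one match roots level by level.

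The main obstacle I anticipate is the second part — making the ``the perturbation does not interfere below level $N$'' statement precise, because one must track the Newton polygon through the (finitely many, but a priori numerous) iterations of the Newton--Puiseux process and verify uniformly that the monomials $x^{mN}$ and $y^{mN}$ never sit on a relevant edge until the accumulated exponent exceeds $N$. The degree bound $mN > \deg f$ is exactly the quantitative input that makes this work, and getting the bookkeeping right — especially handling roots of various multiplicities and roots of large order simultaneously — is where the real care is needed. The square-free part, by contrast, should be a relatively short gcd/order-comparison argument once the right contradiction is set up.
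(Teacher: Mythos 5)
Your overall plan (split into a square-freeness claim and a truncation claim, with the hypothesis $mN>\deg f$ doing the work in both) matches the paper's, but both halves as written have genuine problems. For square-freeness, your argument is local at the origin: a repeated factor of the \emph{polynomial} $\widetilde f$ need not vanish at $(0,0)$ (think of a factor such as $(y-1)^2$, which is a unit in $\mathbb{C}\{x,y\}$ and has no Newton--Puiseux root centered at the origin), so detecting a common branch of $\widetilde f$ and $\partial\widetilde f/\partial y$ through the origin cannot establish global square-freeness, which is what the theorem asserts and what the Maple \textsf{puiseux} command requires. Moreover, the inequality you lean on, $\ord f(x,\varphi(x))\leqslant \deg_y f\cdot\ord\varphi+\deg_x f\leqslant\deg f$ whenever $f(x,\varphi(x))\not\equiv 0$, is false: for $f=y^2-x^3$ and $\varphi=x^{3/2}+x^{10}$ one gets $\ord f(x,\varphi(x))=23/2>3=\deg f$; the order of $f$ along a series can be arbitrarily large when the series is close to a root of $f$. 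The paper avoids all of this by comparing \emph{top-degree} homogeneous forms: if $\widetilde f=p^2q$ with $\deg p>0$, then the leading forms satisfy $c(x^{mN}+y^{mN})=\widetilde p^{\,2}\widetilde q$, which is impossible because $x^{mN}+y^{mN}$ is square-free. That is a two-line global argument.

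For the second half, your ``cleaner alternative'' is the right idea but the hypothesis is misstated, and the error matters. The correct statement (the paper's Lemma~\ref{Lemma34}) is: if $\varphi\bmod x^N\notin\mathbf{P}_N(f)$ --- i.e.\ $\ord(\varphi-\varphi_k)<N$ for \emph{every} Newton--Puiseux root $\varphi_k$ of $f$ --- then $\ord f(x,\varphi(x))=\sum_{k=1}^m\ord(\varphi-\varphi_k)<mN$, the count of $m$ factors coming from Weierstrass preparation and $y$-regularity of order $m$. Your condition ``$\ord\psi<N$'' (the order of the series itself) does not imply this: with $f=y^2-x^3$, $N=2$, $\psi=x^{3/2}+x^{100}$ one has $\ord\psi<N$ and $f(x,\psi)\not\equiv0$ yet $\ord f(x,\psi)=203/2\gg mN=4$. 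Once the lemma is stated with the correct hypothesis, the conclusion follows in three lines with no Newton-polygon bookkeeping at all (the ``main obstacle'' you anticipate evaporates): if $\varphi\in\mathbf{P}(f)$ had $\varphi\bmod x^N\notin\mathbf{P}_N(\widetilde f)$, the lemma applied to $\widetilde f$ (which is also $y$-regular of order $m$) gives $\ord\widetilde f(x,\varphi(x))<mN$, while $\widetilde f(x,\varphi(x))=c\bigl(x^{mN}+\varphi(x)^{mN}\bigr)$ has order $\geqslant mN$ since every root of a $y$-regular $f$ has $\ord\varphi\geqslant 1$; this is a contradiction, and the reverse inclusion is symmetric.
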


Before proving Theorem~\ref{Theorem32}, we need some lemmas of preparation.

\begin{lemma}\label{Lemma33} 
For all $n > \deg f$ and all $c\ne 0$, the polynomial 
$$\widetilde f(x,y):=f(x,y)+c(x^n+y^n)$$
is square-free.
\end{lemma}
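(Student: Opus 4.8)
The plan is to reduce square-freeness of $\widetilde f = f + c(x^n + y^n)$ to a statement about the resultant or, more elementarily, to a direct argument that $\widetilde f$ and $\partial \widetilde f / \partial y$ have no common factor. Since $n > \deg f$, the polynomial $\widetilde f$ is $y$-regular of order $n$: its part of lowest degree is exactly $c(x^n + y^n)$ (the contributions from $f$ all have degree $\le \deg f < n$), and $c(0,1)^n \cdot$ evaluated gives $c \ne 0$, so after noting $y$-regularity we may freely invoke Weierstrass preparation and Puiseux's theorem for $\widetilde f$. In particular $\widetilde f$ is square-free if and only if all its Newton--Puiseux roots have multiplicity one.

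First I would suppose, for contradiction, that $\widetilde f$ is not square-free, so there is a Newton--Puiseux root $\varphi$ with $\mult_{\widetilde f}(\varphi) \ge 2$; equivalently $\widetilde f(x, \varphi(x)) \equiv 0$ and $\frac{\partial \widetilde f}{\partial y}(x, \varphi(x)) \equiv 0$. From the first identity, $f(x, \varphi(x)) = -c\big(x^n + \varphi(x)^n\big)$. Now I would examine orders: $\varphi$ is a Puiseux series with $\ord \varphi = n_1/n' \ge 1$ (it cannot be a constant series since $\widetilde f(0,y) = f(0,y) + cy^n$ has all roots nonzero unless... — more carefully, $\ord \varphi \ge 1$ because $\widetilde f$ is $y$-regular of order $n$, so every root vanishes at $x=0$ and in fact $\ord \varphi \ge 1$). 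Hence $\ord\big(f(x,\varphi(x))\big) \ge ?$ — the key point is that $f(x, \varphi(x))$ is a polynomial expression in $x$ and $\varphi(x)$ with $f$ of degree $\le \deg f < n$, so $\ord\big(f(x,\varphi(x))\big)$ can be bounded, while $\ord\big(c(x^n + \varphi(x)^n)\big) = n$ (if $\ord\varphi \ge 1$ then $\ord(\varphi^n) \ge n$, and $\ord(x^n) = n$, and these combine to order exactly $n$ unless cancellation occurs, which one rules out). Differentiating, $\frac{\partial f}{\partial y}(x, \varphi(x)) = -cn\,\varphi(x)^{n-1}$, and comparing orders here as well yields a contradiction with $\deg \frac{\partial f}{\partial y} < n$.

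The cleanest route, which I would actually write up, is via the following substitution trick rather than order-counting: a common root $\varphi$ of $\widetilde f$ and $\partial\widetilde f/\partial y$ satisfies
$$
f(x,\varphi) + c x^n + c\varphi^n = 0, \qquad \frac{\partial f}{\partial y}(x,\varphi) + cn\varphi^{n-1} = 0.
$$
Multiply the second by $\varphi/n$ and subtract from the first to eliminate the $c\varphi^n$ term:
$$
f(x,\varphi) - \frac{\varphi}{n}\,\frac{\partial f}{\partial y}(x,\varphi) + c x^n = 0.
$$
The function $h(x,y) := f(x,y) - \frac{y}{n}\frac{\partial f}{\partial y}(x,y)$ is a polynomial of degree $\le \deg f < n$, so $h(x,\varphi(x))$, being obtained by substituting a Puiseux series of order $\ge 1$ into a polynomial of degree $< n$, has order $< n$ unless it is identically zero — whereas $-cx^n$ has order exactly $n$. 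So we would need $h(x,\varphi(x)) \equiv 0$ and $cx^n \equiv 0$, the latter being absurd since $c \ne 0$.

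The main obstacle is making the order comparison fully rigorous, in particular ruling out the degenerate possibility that $\varphi$ has order $\ge 1$ but $h(x,\varphi(x))$ still manages to have order exactly $n$ through cancellation among its terms — this needs the bound $\deg h \le \deg f < n$ together with $\ord\varphi \ge 1$ to force $\ord\big(h(x,\varphi(x))\big) \le \deg h < n$ whenever $h(x,\varphi(x)) \not\equiv 0$, which is a short but slightly delicate valuation estimate; and separately one must confirm $\ord\varphi \ge 1$, i.e. that $\widetilde f$ has no Puiseux root that is a nonzero constant, which follows from $y$-regularity of $\widetilde f$ (every factor $y - \varphi$ with $\varphi(0) \ne 0$ would make $\widetilde f(0,y)$ have a nonzero root, but $\widetilde f(0,y) = f(0,y) + cy^n$ and one checks its lowest-order behaviour forces $\varphi(0) = 0$ for all roots). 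Once these two estimates are in place the contradiction is immediate and the lemma follows.
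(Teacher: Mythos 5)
There is a genuine gap --- in fact the whole approach rests on a sign error about which homogeneous component of $\widetilde f$ the term $c(x^n+y^n)$ controls. Since $n>\deg f$, the polynomial $c(x^n+y^n)$ is the \emph{highest}-degree homogeneous part of $\widetilde f$, not the lowest: the order of $\widetilde f$ equals $\ord f$, and $\widetilde f$ is not $y$-regular of order $n$ in general. This immediately invalidates your claim that every Puiseux root of $\widetilde f$ has order $\geqslant 1$; for instance with $f=y$ one has $\widetilde f(0,y)=y(1+cy^{n-1})$, which has nonzero roots, so $\widetilde f$ has Puiseux roots with $\varphi(0)\ne 0$. This matters because square-freeness is a global property of the polynomial: a repeated factor need not pass through the origin, so an argument that only inspects Newton--Puiseux roots vanishing at $0$ (which is what the paper's Weierstrass/Puiseux factorization produces) cannot detect it. Moreover, even in the case $\ord\varphi\geqslant 1$, the ``slightly delicate valuation estimate'' you rely on, namely $\ord\bigl(h(x,\varphi(x))\bigr)\leqslant\deg h$ whenever $h(x,\varphi(x))\not\equiv 0$, is simply false: take $h(x,y)=y-x$ and $\varphi(x)=x+x^{N}$, which gives $\ord h(x,\varphi(x))=N$ for arbitrarily large $N$. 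Cancellation cannot be ruled out by degree considerations alone, so the intended contradiction with $\ord(-cx^n)=n$ does not follow.

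The paper's proof exploits precisely the observation you inverted. Write $\widetilde f=p^2q$ with $\deg p>0$ and compare highest-degree homogeneous components: since $n>\deg f$, the top part of $\widetilde f$ is exactly $c(x^n+y^n)$, and the top part of a product is the product of the top parts, so $c(x^n+y^n)=\widetilde p^{\,2}\widetilde q$. But $x^n+y^n$ factors over $\mathbb{C}$ into $n$ distinct linear forms, hence is square-free, a contradiction. If you want to salvage a root-based argument you would have to work with all $n$ roots of $\widetilde f$ as a degree-$n$ polynomial in $y$ over the field of Puiseux series (using that the coefficient of $y^n$ is the nonzero constant $c$ to exclude repeated factors lying in $\mathbb{R}[x]$), but at that point the elementary comparison of leading forms is both shorter and avoids all the order estimates.
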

\begin{proof} 
Assume for contradiction that $\widetilde f$ is not square free, i.e., there are $p,q\in \mathbb{R}[x,y]$ with $\deg p>0$ such that $\widetilde f(x,y) = p(x, y)^2 q(x, y).$ Let $\widetilde p$ and $\widetilde q$ be the homogeneous components of highest degree of $p$ and $q,$ respectively. We have
$$c(x^n + y^n) = \widetilde p(x, y)^2 \widetilde q(x, y),$$
which is a contradiction since the polynomial $x^n + y^n$ is square-free.
\end{proof}

\begin{lemma}\label{Lemma34}
Let $N > 0$ be an integer. If $y = \varphi(x)$ is a Puiseux series such that $\varphi \mod x^N \not \in \mathbf{P}_N(f),$ then 
$$\ord f(x,\varphi(x)) < mN.$$ 
\end{lemma}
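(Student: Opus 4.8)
The plan is to factor $f$ using its Newton–Puiseux roots and then track the order of vanishing of $f(x,\varphi(x))$ branch by branch.

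First I would use $y$-regularity of $f$ together with the Weierstrass preparation theorem and Puiseux's theorem to write
\[
f(x,y) = u(x,y) \prod_{\psi \in \mathbf{P}(f)} (y - \psi(x))^{\mult_f(\psi)},
\]
where $u(0,0) \ne 0$, so that $\ord_x u(x,\varphi(x)) = 0$ and hence
\[
\ord f(x,\varphi(x)) = \sum_{\psi \in \mathbf{P}(f)} \mult_f(\psi)\, \ord_x\bigl(\varphi(x) - \psi(x)\bigr).
\]
Thus it suffices to bound each summand $\ord_x(\varphi - \psi)$. The key point is that since $\varphi \bmod x^N \notin \mathbf{P}_N(f)$, for \emph{every} root $\psi \in \mathbf{P}(f)$ we have $\varphi \bmod x^N \ne \psi \bmod x^N$; that is, $\varphi$ and $\psi$ must differ in some term of exponent $< N$, which forces $\ord_x(\varphi - \psi) < N$.

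Next I would bound the total multiplicity $\sum_{\psi} \mult_f(\psi)$. Since each $\psi$ is a Puiseux series of positive order (because $f$ is $y$-regular of order $m > 0$, every Newton–Puiseux root has order $> 0$) and the product $\prod_\psi (y - \psi)^{\mult_f(\psi)}$ has $y$-degree equal to $\ord f = m$ after absorbing the unit — more precisely the associated Weierstrass polynomial has degree $m$ — we get $\sum_{\psi} \mult_f(\psi) = m$. Combining,
\[
\ord f(x,\varphi(x)) = \sum_{\psi \in \mathbf{P}(f)} \mult_f(\psi)\, \ord_x(\varphi - \psi) < \sum_{\psi \in \mathbf{P}(f)} \mult_f(\psi)\cdot N = mN,
\]
which is the claimed bound. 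One should also handle the degenerate case $\mathbf{P}(f) = \emptyset$, i.e. $f$ a unit times a power of... actually $y$-regular of order $m>0$ guarantees at least one root, so the product is nonempty; if $\varphi \equiv 0$ were an issue one checks $0 \bmod x^N \notin \mathbf{P}_N(f)$ still gives the bound.

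The main obstacle I anticipate is being careful about two things: (i) justifying that $\ord_x(\varphi(x) - \psi(x)) < N$ follows rigorously from $\varphi \bmod x^N \ne \psi \bmod x^N$ — one must argue that if the two truncations differ then their difference has a nonzero term of exponent strictly below $N$, so the difference $\varphi - \psi$ has order $< N$ (here using that $\varphi \bmod x^N$ consists exactly of the terms of exponent $<N$); and (ii) the bookkeeping that $\sum_\psi \mult_f(\psi) = m$, which requires invoking that the Weierstrass polynomial of a $y$-regular series of order $m$ has $y$-degree $m$ and that its full factorization into linear factors over the Puiseux field accounts for all the multiplicity. Neither is deep, but the inequality must be strict, so I would make sure every root contributes a factor $<N$ and that the empty-product edge case cannot occur.
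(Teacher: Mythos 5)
Your proposal is correct and follows essentially the same route as the paper: factor $f$ via Weierstrass preparation and Puiseux's theorem into a unit times $m$ linear factors $(y-\varphi_k)$ (counted with multiplicity), observe that the hypothesis $\varphi \bmod x^N \notin \mathbf{P}_N(f)$ forces $\ord(\varphi-\varphi_k)<N$ for every $k$, and sum the orders to get the strict bound $mN$. The paper states this more tersely (listing the $m$ roots with repetition rather than writing multiplicities explicitly), but the content, including the key contrapositive step and the count $\sum_\psi \mult_f(\psi)=m$, is identical.
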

\begin{proof}
By Puiseux's theorem, we can write 
$$f(x,y) = u(x,y) \prod_{k = 1}^m (y - \varphi_k(x)),$$
where $u \in \mathbb{C}\{x, y\}$ with $u(0, 0)\ne 0$ and $\varphi_k,\ k = 1,\dots,m,$ are (not necessarily distinct) Newton--Puiseux roots of $f.$ 
By assumption, $\ord(\varphi - \varphi_k) < N$ for all $k.$ Thus
\begin{eqnarray*}
\ord f(x, \varphi(x)) &=& \sum_{k = 1}^m \ord  (\varphi - \varphi_k) \ < \ m N,
\end{eqnarray*}
which completes the proof.
\end{proof}

\begin{proof} [Proof of Theorem~\ref{Theorem32}]
In view of Lemma~\ref{Lemma33}, the first conclusion is clear.
Let us show that $\mathbf{P}_N(f) \subseteq \mathbf{P}_N(\widetilde f).$ The inverse inclusion is proved analogously.

By contradiction, assume that there exists a Newton--Puiseux root $\varphi$ of $f$ such that $\varphi \mod x^N \not \in \mathbf{P}_N(\widetilde f).$
By definition, $\ord \widetilde{f} = \ord f = m.$ Applying Lemma~\ref{Lemma34} for $\widetilde f,$ we get $\ord \widetilde f(x,\varphi(x)) < mN.$
On the other hand, we have
\begin{eqnarray*}
\widetilde f(x, \varphi(x)) &=& f(x,\varphi(x)) + c \left( x^{mN}+(\varphi(x))^{mN} \right) \ = \ c \left(x^{mN} + (\varphi(x))^{mN} \right).
\end{eqnarray*}
Therefore
\begin{eqnarray*}
mN &>& \ord \widetilde f(x,\varphi(x)) \ = \ \ord \left (x^{mN} + (\varphi(x))^{mN} \right) \ \geqslant \ mN,
\end{eqnarray*}
which is impossible. The theorem is proved.
\end{proof}

In order to work with truncated Newton--Puiseux roots, we need to determine the order from which the roots should be truncated so that the following requirements are fulfilled:
\begin{itemize}
\item The truncations of two distinct roots must be different.
\item It is possible to determine all real roots.
\item The order of a given polynomial $f$ on a Newton--Puiseux root of another polynomial not being a Newton--Puiseux root of $f$ is the same as the order of $f$ on the truncation of this root.
\item It is possible to determine the multiplicity of all roots.
\item It is possible to determine the common roots of two polynomials.
\end{itemize}
We will show, in Theorems~\ref{Theorem36},~\ref{Theorem37}~and~\ref{Theorem39} below, that the truncation order needed is bounded by a number $\mathcal N({\cdot})$ depending only on the degree of the polynomials considered.
Furthermore, according to~\cite{Hoeij1994}, the truncation order needed can be lower than this number since it depends on each polynomial function.

We begin with the following observation, which is inspired by~\cite[Theorem 4.3]{NguyenHD2019}.

\begin{lemma}\label{Lemma35}
Let $f \in \mathbb{R}[x, y]$ be $y$-regular. If $y = \gamma(x)$ is a Newton--Puiseux root of $\frac{\partial f}{\partial y}$ but not of $f,$ then 
$$\ord f(x,\gamma(x))\leqslant (\deg f - 1)^2 + 1.$$
\end{lemma}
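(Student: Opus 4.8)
The plan is to exploit the factorization of $f$ into Newton--Puiseux roots and compare it with the corresponding factorization of $\partial f/\partial y$. Write, by Puiseux's theorem and Weierstrass preparation,
\begin{equation*}
f(x,y) = u(x,y)\prod_{\varphi \in \mathbf{P}(f)}(y - \varphi(x))^{\mult_f(\varphi)},
\end{equation*}
with $u(0,0)\neq 0$; since $f$ is $y$-regular of some order $d \le \deg f$, the total multiplicity $\sum_{\varphi}\mult_f(\varphi)$ equals $d$. Taking $\partial/\partial y$, each factor $(y-\varphi)^{\mult_f(\varphi)}$ contributes a factor $(y-\varphi)^{\mult_f(\varphi)-1}$ to $\partial f/\partial y$, so the ``repeated part'' $\prod_{\varphi}(y-\varphi)^{\mult_f(\varphi)-1}$ divides $\partial f/\partial y$, and the remaining Newton--Puiseux roots of $\partial f/\partial y$ — the ones that are \emph{not} roots of $f$ — are roots of the complementary factor, which has $y$-degree at most $d - 1 \le \deg f - 1$. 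Call $\gamma$ such a root; our goal is to bound $\ord f(x,\gamma(x))$.

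Next I would estimate $\ord f(x,\gamma(x)) = \sum_{\varphi\in\mathbf{P}(f)}\mult_f(\varphi)\,\ord(\gamma - \varphi)$ by controlling each $\ord(\gamma-\varphi)$. The point is that $\gamma$ is an ordinary (non-repeated, or ``new'') root of $\partial f/\partial y$, and the key quantitative input — this is the content of the cited \cite[Theorem 4.3]{NguyenHD2019} that the lemma says it is inspired by — is a bound, in terms of degrees, on how closely a Newton--Puiseux root of $\partial f/\partial y$ can approximate a Newton--Puiseux root of $f$ without actually being one. Concretely, one shows $\ord(\gamma - \varphi) \le \deg f - 1$ for each $\varphi$ (a contact-order estimate coming from the fact that $\partial f/\partial y$ has $y$-degree $d-1$, so its roots cannot be ``tangent to infinite order'' to the roots of $f$ along the locus where $f$ itself does not vanish). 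Summing, $\ord f(x,\gamma(x)) \le (\deg f - 1)\sum_{\varphi}\mult_f(\varphi) = (\deg f -1)\cdot d \le (\deg f - 1)\deg f$. A slightly more careful bookkeeping, separating the single root $\varphi$ that $\gamma$ is closest to (contributing at most $(\deg f - 1)$ per unit of multiplicity, with total multiplicity $\le d$) from the rest (each contributing $\le 1$, of which there are $\le d - 1$), yields $\ord f(x,\gamma(x)) \le (\deg f - 1)\cdot 1 \cdot \big((d-1) + \ord(\gamma-\varphi_0)\big)$-type terms that collapse to $(\deg f - 1)^2 + 1$; getting exactly this constant is a matter of tracking which root absorbs the large contact order and using $d \le \deg f$.

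The main obstacle is the sharp contact-order estimate $\ord(\gamma - \varphi) \le \deg f - 1$ for a root $\gamma$ of $\partial f / \partial y$ that is not a root of $f$: this is where one must actually use that $\gamma$ is ``new,'' i.e., use the interplay between $f$ and $\partial f/\partial y$ rather than just degree counting. The cleanest route is probably to argue by contradiction: if $\gamma$ had contact $> \deg f - 1$ with some root $\varphi$ of $f$, then by a resultant or Newton-polygon argument $\gamma$ would be forced to coincide with $\varphi$ to such high order that, combined with the bound on the $y$-degree of the complementary factor of $\partial f/\partial y$, it would have to \emph{be} a root of $f$ — contradiction. Alternatively one invokes \cite[Theorem 4.3]{NguyenHD2019} essentially verbatim. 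The remaining steps (the factorization, the additivity of order under products, the final arithmetic to reach $(\deg f - 1)^2 + 1$) are routine once this estimate is in hand.
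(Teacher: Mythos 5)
Your proposal does not close. The two load-bearing steps are both missing. First, the contact-order estimate $\ord(\gamma-\varphi)\leqslant \deg f-1$ is asserted, flagged by you as ``the main obstacle,'' and then never established; the contradiction sketch (``by a resultant or Newton-polygon argument $\gamma$ would be forced to coincide with $\varphi$'') is not an argument, and the fallback of citing \cite[Theorem 4.3]{NguyenHD2019} ``essentially verbatim'' amounts to citing a statement of the same nature as the one you are trying to prove. Second, even granting that estimate, your own summation gives
$\ord f(x,\gamma(x))\leqslant(\deg f-1)\sum_{\varphi}\mult_f(\varphi)\leqslant(\deg f-1)\deg f=(\deg f-1)^2+(\deg f-1),$
which overshoots the claimed bound $(\deg f-1)^2+1$ as soon as $\deg f>2$; the ``more careful bookkeeping'' that is supposed to recover the exact constant is described only as terms that ``collapse,'' with no identifiable mechanism. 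So the gap sits precisely at the quantitative heart of the lemma.

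The paper's route avoids per-root contact orders entirely. Since $\gamma$ is a root of $\frac{\partial f}{\partial y}$, the chain rule gives $\frac{d}{dx}f(x,\gamma(x))=\frac{\partial f}{\partial x}(x,\gamma(x))$, whence $\ord f(x,\gamma(x))=\ord \frac{\partial f}{\partial x}(x,\gamma(x))+1$. The right-hand side is at most $i\bigl(\frac{\partial f}{\partial x},h\bigr)+1$, where $h$ is the irreducible polynomial component of $\frac{\partial f}{\partial y}$ whose zero set carries the branch $\gamma$ (the hypothesis that $\gamma$ is not a root of $f$ guarantees that $h$ does not divide $\frac{\partial f}{\partial x}$, so this intersection number is finite), and Bezout's theorem bounds it by $(\deg f-1)\deg h\leqslant(\deg f-1)^2$. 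If you want to salvage your approach you would need an actual proof of a polar-quotient bound; the derivative-plus-Bezout argument is the shorter and standard way to reach the stated constant.
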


Before proving the lemma we recall the notion of intersection multiplicity of two plane curve germs (see, for example, \cite{Greuel2007}). Let $f \in \mathbb{C}\{x, y\}$ be irreducible. Then the {\em intersection multiplicity} of any $g \in \mathbb{C}\{x, y\}$ with $f$ is given by
$$i(f, g) := \mathrm{ord}\, g(x(t),y(t)),$$
where $t \mapsto (x(t), y(t))$ is a parametrization for the curve germ defined by $f.$ Here by a {\em parametrization} of the curve germ $f^{-1}(0),$ we mean an analytic map germ
$$\varphi \colon (\mathbb{C}, 0) \rightarrow (\mathbb{C}^2, 0), \quad t \mapsto (x(t), y(t)),$$
with $f \circ \varphi \equiv 0$ and satisfying the following {\em universal factorization property}:  
for each analytic mapping germ $\psi \colon (\mathbb C, 0) \to (\mathbb C^2, 0)$ with $f \circ\psi  \equiv 0,$ there exists a unique analytic mapping germ $\widetilde\psi\colon (\mathbb C, 0) \to (\mathbb C, 0)$ such that $\psi = \varphi \circ \widetilde\psi.$
In general, let $f \in \mathbb{C}\{x, y\}$ be a convergent power series and let $f = f_1^{\alpha_1} \cdots f_r^{\alpha_r}$  be a factorization of $f$ in the ring $\mathbb{C}\{x, y\}$ with $f_i$ being irreducible and pairwise co-prime.
Then the intersection multiplicity of $g$ with $f$ is defined to be the sum
$$i(f_1^{\alpha_1} \cdots f_r^{\alpha_r}, g) := \alpha_1 i(f_1, g) + \cdots + \alpha_r i(f_r, g).$$

\begin{proof}[Proof of Lemma~\ref{Lemma35}]
Let $g$ be the irreducible factor of $\frac{\partial f}{\partial y}$ in $\mathbb{C}\{x, y\}$ having $\gamma$ as a Newton--Puiseux root. 
Then $t \mapsto (t^n, \gamma(t^n))$ is a parametrization of the curve germ $g^{-1}(0),$ where $n$ denotes the order of $g.$ 
Note that $i(\frac{\partial f}{\partial x}, g)$ is finite because $\gamma$ is a Newton--Puiseux root of $\frac{\partial f}{\partial y}$ but not of $f.$ Hence
\begin{eqnarray*}
\ord f(x, \gamma(x)) &=& \ord \frac{\partial f}{\partial x}(x, \gamma(x)) + 1\\
&\leqslant& n \cdot \ord \frac{\partial f}{\partial x}(x,\gamma(x)) + 1\\
&=& \ord \frac{\partial f}{\partial x}(t^n, \gamma(t^n)) + 1 = i\left(\frac{\partial f}{\partial x}, g\right) + 1.
\end{eqnarray*}
Let $h \in \mathbb C[x, y]$ be the irreducible component of the polynomial $\frac{\partial f}{\partial y}$ which is divisible by $g$ in $\mathbb{C}\{x, y\}.$ Then $\deg h \leqslant \deg f - 1$ and $h$ does not divide $\frac{\partial f}{\partial x}$ (because $i( \frac{\partial f}{\partial x} , g)$ is finite). 
It follows from Bezout's theorem (see \cite[p. 232]{Brieskorn1986}) that
$$i\left(\frac{\partial f}{\partial x},h\right) \leqslant (\deg f - 1)\deg h \leqslant (\deg f - 1)^2.$$
Therefore,
\begin{eqnarray*}
\ord f(x, \gamma(x)) &\leqslant& i \left(\frac{\partial f}{\partial x}, g\right) + 1 \ \leqslant \ i \left(\frac{\partial f}{\partial x}, h \right) + 1 \ \leqslant \ (\deg f - 1)^2 + 1,
\end{eqnarray*}
which completes the proof.
\end{proof}

For $f \in \mathbb{R}[x, y],$ let
\begin{eqnarray*}
\mathbf{P}^{+}(f) &:=& \{\varphi \in \mathbf{P}(f) :\ \varphi \text{ is real}\}, \\
\mathbf{P}^{-}(f) &:=& \mathbf{P}^{+}(f(-x, y)).
\end{eqnarray*}
Moreover for $*\in\{\pm\}$ and $N > 0$ we set
\begin{eqnarray*}
\mathbf{P}^{*}_N(f) &:=& \{\varphi \mod x^N:\ \varphi \in \mathbf{P}^{*}(f)\}.
\end{eqnarray*}
For each integer $d > 0,$ let
\begin{eqnarray*}
\mathcal N(d) &:=& \frac{(d - 1)^2 + 1}{2}.
\end{eqnarray*}

\begin{theorem}\label{Theorem36}
Let $f \in \mathbb{R}[x, y]$ be $y$-regular and $N > \mathcal N(\deg f).$ Then the following statements hold:
\begin{enumerate}[{\rm (i)}]
\item If $y = \varphi_1(x)$ and $y = \varphi_2(x)$ are distinct Newton--Puiseux roots of $f,$ then 
\begin{eqnarray*}
\ord(\varphi_1 - \varphi_2) & \leqslant & \mathcal N(\deg f).
\end{eqnarray*}

\item If $y = \varphi(x)$ is a Newton--Puiseux root of $f$  such that all the coefficients of $\varphi\mod x^N$ are real, then $\varphi$ is real.

\item The mapping 
$$\mathbf{P}(f) \to \mathbf{P}_N(f), \quad \varphi \mapsto \varphi \mod x^N,$$ 
is one-to-one. Furthermore, for $*\in\{\pm\}$, the restriction of this mapping on $\mathbf{P}^{*}(f)$, is a bijection onto $\mathbf{P}_N^{*}(f).$

\item 
Let $\nu(\cdot)$ be the term of lowest order of the considered series.
If $\varphi$ is a Puiseux series such that $\varphi_N := \varphi\mod x^N\not\in \mathbf{P}_N(f),$ then $\nu(f(x, \varphi(x)))=\nu(f(x, \varphi_N(x)).$
In particular,
\begin{eqnarray*}
\ord f(x, \varphi(x)) &=& \ord f(x, \varphi_N(x)).
\end{eqnarray*}

\end{enumerate}
\end{theorem}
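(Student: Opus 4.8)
The plan is to prove (i) first, since it is the crux; (ii) and (iii) then follow by soft arguments, and (iv) follows directly from the Weierstrass--Puiseux factorization $f=u(x,y)\prod_{k=1}^{m}(y-\varphi_{k}(x))$ with $u(0,0)\neq0$, where $\varphi_{1},\dots,\varphi_{m}$ are the Newton--Puiseux roots of $f$ listed with multiplicity.

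For (i), the idea is to feed Lemma~\ref{Lemma35} a carefully chosen Newton--Puiseux root of $\partial f/\partial y$, produced by a Rolle-type argument. Fix distinct roots $\varphi_{1},\varphi_{2}$ of $f$ and put $d:=\ord(\varphi_{1}-\varphi_{2})$. Working over the field of Puiseux series in $x$ (equivalently, after a substitution $x\mapsto x^{n}$ clearing denominators), I would change coordinates by $y\mapsto\varphi_{1}(x)+x^{d}z$. Under this substitution those roots $\varphi_{k}$ with $\ord(\varphi_{k}-\varphi_{1})\geq d$ (there are $p\geq 2$ of them, since $\varphi_{1}$ contributes the value $z=0$ and $\varphi_{2}$ contributes a nonzero value) become, after dividing out the appropriate power of $x$, the roots of an explicit polynomial $A(z)$ of degree $p$, while $\partial f/\partial y$ transforms so that its roots of the corresponding order are governed by $A'(z)$. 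Since $A$ has at least two distinct roots, $A'$ has a root $\zeta_{0}$ that is not a root of $A$, and in particular $\zeta_{0}\neq 0$; lifting $\zeta_{0}$ to a Puiseux-series root of the transform of $\partial f/\partial y$ produces a Newton--Puiseux root $\gamma$ of $\partial f/\partial y$ which is \emph{not} a root of $f$ and satisfies $\ord(\gamma-\varphi_{1})=d$. Because $\ord(\varphi_{1}-\varphi_{2})=d$, this already forces $\ord(\gamma-\varphi_{2})=d$. Now Lemma~\ref{Lemma35} gives $\ord f(x,\gamma(x))\leq(\deg f-1)^{2}+1$, whereas $\ord f(x,\gamma(x))=\sum_{k=1}^{m}\ord(\gamma-\varphi_{k})\geq\ord(\gamma-\varphi_{1})+\ord(\gamma-\varphi_{2})=2d$; hence $2d\leq(\deg f-1)^{2}+1$, i.e.\ $d\leq\mathcal N(\deg f)$, which is (i).

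Granting (i), the remaining parts are routine. For (ii): if a Newton--Puiseux root $\varphi$ of $f$ were not real, then, since $f\in\mathbb R[x,y]$, conjugating all coefficients would yield a second Newton--Puiseux root $\overline{\varphi}\neq\varphi$; by (i) these differ in a term of exponent $\leq\mathcal N(\deg f)<N$, so $\varphi\bmod x^{N}$ would have a non-real coefficient, contradicting the hypothesis. For (iii): injectivity of $\varphi\mapsto\varphi\bmod x^{N}$ on $\mathbf{P}(f)$ is immediate from (i), since two distinct roots differ in a term of exponent $\leq\mathcal N(\deg f)<N$; surjectivity onto $\mathbf{P}_{N}(f)$ holds by definition, and the identical argument applies to the restrictions to $\mathbf{P}^{\pm}(f)$ and $\mathbf{P}^{\pm}_{N}(f)$. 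For (iv): evaluate the factorization at $y=\varphi(x)$ and at $y=\varphi_{N}(x)$; the hypothesis $\varphi_{N}\notin\mathbf{P}_{N}(f)$ says $\varphi\bmod x^{N}\neq\varphi_{k}\bmod x^{N}$ for every $k$, so $\varphi-\varphi_{k}$ has a nonzero term of exponent $<N$, and since $\varphi-\varphi_{k}$ and $\varphi_{N}-\varphi_{k}$ agree up to exponent $<N$ they share the same lowest-order term, $\nu(\varphi-\varphi_{k})=\nu(\varphi_{N}-\varphi_{k})$. Because every Puiseux series here has order $\geq 1$, also $\varphi(0)=\varphi_{N}(0)=0$ and $\nu(u(x,\varphi(x)))=\nu(u(x,\varphi_{N}(x)))=u(0,0)$; multiplying the lowest-order terms through the factorization gives $\nu(f(x,\varphi(x)))=\nu(f(x,\varphi_{N}(x)))$, whence the two orders coincide.

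The step I expect to be the main obstacle is the Rolle-type construction in (i): making precise the passage from $f$ to the initial polynomials $A(z)$ and $A'(z)$ under the coordinate change $y\mapsto\varphi_{1}+x^{d}z$, and verifying both that the lifted root $\gamma$ of $\partial f/\partial y$ is genuinely not a root of $f$ and that it keeps contact exactly $d$ with $\varphi_{1}$. Once that is in place, the bound $\mathcal N(\deg f)$ is forced by Lemma~\ref{Lemma35}, and (ii)--(iv) are bookkeeping.
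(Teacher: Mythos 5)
Your proposal is correct and follows the paper's skeleton exactly for parts (ii)--(iv) and for the reduction of (i) to Lemma~\ref{Lemma35}; the one genuine divergence is how the auxiliary root $\gamma$ of $\frac{\partial f}{\partial y}$ is produced in (i). The paper simply invokes the Kuo--Lu lemma (\cite[Lemma 3.3]{Kuo1977}, restated as \cite[Corollary 3.1]{NguyenHD2019}), which hands over a Newton--Puiseux root $\gamma$ of $\frac{\partial f}{\partial y}$, not of $f$, with $\ord(\gamma-\varphi_1)=\ord(\gamma-\varphi_2)=\ord(\varphi_1-\varphi_2)$; everything after that ($2\,\ord(\varphi_1-\varphi_2)\leqslant \ord f(x,\gamma(x))\leqslant (\deg f-1)^2+1$) is identical to yours. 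Your Rolle-type construction --- substituting $y=\varphi_1(x)+x^d z$, reading off the initial polynomial $A(z)$ whose roots are the $x^d$-coefficients of the $\varphi_k-\varphi_1$ with contact $\geqslant d$, taking a root $\zeta_0$ of $A'$ that is not a root of $A$, and lifting --- is essentially a from-scratch proof of that cited lemma, and it is sound: since $0$ and the coefficient $c$ of $x^d$ in $\varphi_2-\varphi_1$ are two distinct roots of $A$, such a $\zeta_0$ exists and is nonzero; $\gamma$ cannot be a root of $f$ because any root of $f$ with contact exactly $d$ to $\varphi_1$ would have its $x^d$-coefficient among the roots of $A$. What this buys you is self-containedness; what it costs is that the "lifting" step (every root of the initial form $A'$ of the transformed $\frac{\partial f}{\partial y}$ extends to an actual Puiseux root with that prescribed leading coefficient, and differentiating in $z$ really does send initial form to derivative of initial form) is precisely the nontrivial content of the Kuo--Lu lemma and still requires the Newton--Puiseux algorithm to finish --- you correctly flag this as the main obstacle, but it is left as a sketch. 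Two harmless imprecisions: the claim that $\ord(\varphi_1-\varphi_2)=d$ ``already forces'' $\ord(\gamma-\varphi_2)=d$ only gives $\geqslant d$ a priori (equality needs $\zeta_0\neq c$, which does hold since $\zeta_0$ is not a root of $A$); and only the inequality $\geqslant d$ is actually needed for the final bound $2d\leqslant \ord f(x,\gamma(x))$. Parts (ii)--(iv) are exactly the paper's bookkeeping.
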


\begin{proof} 
(i) By~\cite[Lemma 3.3]{Kuo1977} (see also~\cite[Corollary 3.1]{NguyenHD2019}), there is a Newton--Puiseux root $\gamma$ of $\frac{\partial f}{\partial y}$ but not of $f$ such that
\begin{eqnarray*}
 \ord(\varphi_1 - \varphi_2) &=& \ord(\gamma - \varphi_1) \ = \ \ord(\gamma-\varphi_2).
\end{eqnarray*}
Consequently,
\begin{eqnarray*}
2\, \ord(\varphi_1 - \varphi_2) &=&  \ord(\gamma - \varphi_1) + \ord(\gamma - \varphi_2) \ \leqslant \ \ord f(x,\gamma(x)),
\end{eqnarray*}
which, together with Lemma~\ref{Lemma35}, yields the desired conclusion.

(ii) Assume for contradiction that $\varphi$ is not real.
Since the polynomial $f$ has real coefficients, the complex conjugate $\overline{\varphi}$ of $\varphi$ is also a Newton--Puiseux root of $f$ and 
$\overline{\varphi} \ne \varphi.$ Hence 
\begin{eqnarray*}
\ord (\overline{\varphi} - \varphi) \geqslant N > \mathcal N(d),
\end{eqnarray*} 
which contradicts Item~(i).

(iii) If there are distinct Newton--Puiseux roots $\varphi_1$ and $\varphi_2$ of $f$ such that 
\begin{eqnarray*}
\varphi_1 \mod x^N &=& \varphi_2 \mod x^N,
\end{eqnarray*}
then 
\begin{eqnarray*}
\ord(\varphi_1 - \varphi_2) &\geqslant& N \ > \ \mathcal N(d),
\end{eqnarray*}
which contradicts Item~(i). Hence the mapping
$$\mathbf{P}(f) \to \mathbf{P}_N(f), \quad \varphi \mapsto \varphi \mod x^N,$$ 
is injective, and so is bijective. This, together with Items (i) and (ii), implies that the restriction of this mapping on $\mathbf{P}^{*}(f)$ is a bijection onto $\mathbf{P}_N^{*}(f).$

(iv) 
Since $\varphi_N\not\in \mathbf{P}_N(f)$, we have for all $\psi \in \mathbf{P}(f),$
\begin{eqnarray*}
\nu(\varphi - \psi) &=& \nu(\varphi_N - \psi).
\end{eqnarray*}
On the other hand, by Puiseux's theorem, we have 
$$f(x,y) = u(x,y) \prod_{\psi\in \mathbf{P}(f)} (y - \psi(x))^{\mult_f(\psi)},$$
where $u \in \mathbb{C}\{x, y\}$ with $u(0, 0)\ne 0$.
Therefore
\begin{eqnarray*}
\nu\left(f(x,\varphi(x))\right) &=& \nu\left(u(x,\varphi(x)) \prod_{\psi\in \mathbf{P}(f)} (\varphi(x) - \psi(x))^{\mult_f(\psi)}\right)\\
&=& u(0,0) \prod_{\psi\in \mathbf{P}(f)} \left(\nu(\varphi-\psi)\right)^{\mult_f(\psi)}\\
&=& u(0,0) \prod_{\psi\in \mathbf{P}(f)} \left(\nu(\varphi_N-\psi)\right)^{\mult_f(\psi)}\\
&=& \nu\left(u(x,\varphi_N(x)) \prod_{\psi\in \mathbf{P}(f)} (\varphi_N(x) - \psi(x))^{\mult_f(\psi)}\right)\\
&=&\nu\left(f(x,\varphi_N(x))\right).
\end{eqnarray*}
This ends the proof of the theorem.
\end{proof}

\begin{theorem}\label{Theorem37}
Let $f, g \in \mathbb{R}[x, y]$ be $y$-regular. For all $N > \mathcal N(\deg f + \deg g),$ the mapping 
$$\mathbf{P}(f) \to \mathbf{P}_N(f), \quad \varphi \mapsto \varphi \mod x^N,$$ 
induces a bijection between $\mathbf{P}(f)\setminus \mathbf{P}(g)$ and $\mathbf{P}_N(f)\setminus \mathbf{P}_N(g).$
Furthermore, for $*\in\{\pm\}$, the restriction of this mapping on $\mathbf{P}^{*}(f) \setminus \mathbf{P}^{*}(g)$ is a bijection onto $\mathbf{P}_N^{*}(f)\setminus \mathbf{P}_N^{*}(g).$
\end{theorem}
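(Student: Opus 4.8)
The plan is to deduce both assertions from part~(i) of Theorem~\ref{Theorem36} applied to the single polynomial $fg$. First I would record two preliminary facts. The product $fg$ is again $y$-regular, with $\deg(fg) = \deg f + \deg g$: its homogeneous component of lowest degree is the product of those of $f$ and $g$, and the value of this product at $(0,1)$ is the product of two nonzero numbers, hence nonzero. Moreover, for any Puiseux series $\varphi$, if $f(x,\varphi(x)) \equiv 0$ then $(fg)(x,\varphi(x)) \equiv 0$, and similarly with $g$ in place of $f$; hence $\mathbf{P}(f) \cup \mathbf{P}(g) \subseteq \mathbf{P}(fg)$ and, for $* \in \{\pm\}$, $\mathbf{P}^{*}(f) \cup \mathbf{P}^{*}(g) \subseteq \mathbf{P}^{*}(fg) \subseteq \mathbf{P}(fg)$. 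Since $N > \mathcal N(\deg f + \deg g) = \mathcal N(\deg(fg))$, Theorem~\ref{Theorem36}(i) applies to $fg$, giving that any two distinct Newton--Puiseux roots of $fg$ differ in order by at most $\mathcal N(\deg f + \deg g) < N$. I will also use the trivial remark that if two Puiseux series have equal truncations modulo $x^N$ but are not equal, then the order of their difference is at least $N$.

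Next I would establish the first assertion by checking the three required properties of the truncation map $\varphi \mapsto \varphi \mod x^N$ on $\mathbf{P}(f) \setminus \mathbf{P}(g)$. For \emph{injectivity}: if $\varphi_1, \varphi_2 \in \mathbf{P}(f) \setminus \mathbf{P}(g)$ have the same truncation and $\varphi_1 \ne \varphi_2$, then they are distinct Newton--Puiseux roots of $fg$ with $\ord(\varphi_1 - \varphi_2) \ge N > \mathcal N(\deg(fg))$, contradicting Theorem~\ref{Theorem36}(i). To see that the \emph{image lies in $\mathbf{P}_N(f) \setminus \mathbf{P}_N(g)$}: if $\varphi \in \mathbf{P}(f) \setminus \mathbf{P}(g)$ had $\varphi \mod x^N \in \mathbf{P}_N(g)$, pick $\psi \in \mathbf{P}(g)$ with $\psi \mod x^N = \varphi \mod x^N$; then $\psi \ne \varphi$ since $\varphi \notin \mathbf{P}(g)$, both $\varphi$ and $\psi$ lie in $\mathbf{P}(fg)$, and $\ord(\varphi - \psi) \ge N > \mathcal N(\deg(fg))$, again a contradiction. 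For \emph{surjectivity}: any $\eta \in \mathbf{P}_N(f) \setminus \mathbf{P}_N(g)$ equals $\varphi \mod x^N$ for some $\varphi \in \mathbf{P}(f)$ by the very definition of $\mathbf{P}_N(f)$, and then $\varphi \notin \mathbf{P}(g)$, since otherwise $\eta$ would lie in $\mathbf{P}_N(g)$.

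For the starred version I would repeat these three steps verbatim, with $\mathbf{P}$ and $\mathbf{P}_N$ replaced by $\mathbf{P}^{*}$ and $\mathbf{P}^{*}_N$: every root that occurs still lies in $\mathbf{P}(fg)$ by the inclusions recorded above, so Theorem~\ref{Theorem36}(i) for $fg$ is applied exactly as before; in the image step one uses that $\psi \in \mathbf{P}^{*}(g)$ together with $\varphi \notin \mathbf{P}^{*}(g)$ forces $\psi \ne \varphi$, and in the surjectivity step that $\eta \in \mathbf{P}^{*}_N(f)$ means $\eta = \varphi \mod x^N$ with $\varphi \in \mathbf{P}^{*}(f)$. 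Alternatively, the case $* = -$ can be obtained from the case $* = +$ applied to $f(-x,y)$ and $g(-x,y)$, but this detour is unnecessary.

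I do not anticipate a genuine obstacle. The argument is in essence a bookkeeping verification, and the only substantive ingredient is the order-separation bound of Theorem~\ref{Theorem36}(i); this is precisely why the truncation order is taken above $\mathcal N(\deg f + \deg g)$ rather than $\mathcal N(\deg f)$ --- so that Newton--Puiseux roots of $f$ and of $g$ are separated \emph{jointly}, by regarding all of them as roots of $fg$. The one point to be careful about is the identification of $fg$ (not $f$ or $g$ separately) as the polynomial to which Theorem~\ref{Theorem36} should be applied; the rest is routine.
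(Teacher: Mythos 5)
Your proof is correct and follows essentially the same route as the paper's: both rest on viewing the Newton--Puiseux roots of $f$ and $g$ jointly as roots of the product $fg$, which is $y$-regular of degree $\deg f+\deg g$, so that the separation bound of Theorem~\ref{Theorem36} at level $\mathcal N(\deg f+\deg g)$ applies (the paper packages this via Theorem~\ref{Theorem36}(iii) and the identity $\mathbf{P}(f)\setminus\mathbf{P}(g)=\mathbf{P}(f)\setminus(\mathbf{P}(f)\cap\mathbf{P}(g))$, while you check injectivity, image and surjectivity directly from Theorem~\ref{Theorem36}(i)). One small correction: for $*=-$ the inclusion $\mathbf{P}^{-}(fg)\subseteq\mathbf{P}(fg)$ is not literally true, since $\mathbf{P}^{-}$ consists of roots of the reflected polynomial $(fg)(-x,y)$; so in that case you must apply Theorem~\ref{Theorem36}(i) to $(fg)(-x,y)$ (which has the same degree and is still $y$-regular) --- i.e.\ take precisely the ``detour'' you dismiss as unnecessary.
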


\begin{proof} 
By definition, if $\varphi \in \mathbf{P}_N(f) \cap \mathbf{P}_N(g)$ then $\varphi \mod x^N \in \mathbf{P}_N(f) \cap \mathbf{P}_N(g).$
Similarly, if $\varphi \in \mathbf{P}^{*}_N(f) \cap \mathbf{P}^{*}_N(g)$ then $\varphi\mod x^N\in \mathbf{P}_N^{*}(f) \cap \mathbf{P}_N^{*}(g).$

Conversely, assume that $\varphi_N \in \mathbf{P}_N(f)\cap \mathbf{P}_N(g).$ Then by the choice of $N$ and by Theorem~\ref{Theorem36}(iii), there is a unique Newton--Puiseux root $\phi$ of $f$ and a unique Newton--Puiseux root $\psi$ of $g$ such that 
\begin{eqnarray*}
\varphi_N &=& \phi \mod x^N \ = \ \psi \mod x^N.
\end{eqnarray*}
Note that $\phi$ and $\psi$ are Newton--Puiseux roots of the product $f \cdot g,$ so again by the choice of $N$ and by Theorem~\ref{Theorem36}(iii), we must have $\phi = \psi.$
In addition, if $\varphi_N\in \mathbf{P}_N^{*}(f) \cap \mathbf{P}_N^{*}(g),$ then $\phi$ and $\psi$ are real in light of Theorem~\ref{Theorem36}(ii).
Therefore the corresponding $\varphi \mapsto \varphi \mod x^N$ induces a bijection between $\mathbf{P}(f) \cap \mathbf{P}(g)$ and $\mathbf{P}_N(f) \cap \mathbf{P}_N(g)$ and a bijection between $\mathbf{P}^{*}(f) \cap \mathbf{P}^{*}(g)$ and $\mathbf{P}_N^{*}(f) \cap \mathbf{P}_N^{*}(g).$

Finally, observe that
\begin{eqnarray*}
\mathbf{P}(f)\setminus \mathbf{P}(g) &=& \mathbf{P}(f) \setminus (\mathbf{P}(f) \cap \mathbf{P}(g)),\\
\mathbf{P}_N(f) \setminus \mathbf{P}_N(g) &=&  \mathbf{P}_N(f) \setminus (\mathbf{P}_N(f)\cap \mathbf{P}_N(g)),\\
\mathbf{P}^{*}(f) \setminus \mathbf{P}^{*}(g) &=&  \mathbf{P}^{*}(f) \setminus (\mathbf{P}^{*}(f)\cap \mathbf{P}^{*}(g)), \\
\mathbf{P}_N^{*}(f) \setminus \mathbf{P}_N^{*}(g) &=& \mathbf{P}_N^{*}(f)\setminus (\mathbf{P}_N^{*}(f)\cap \mathbf{P}_N^{*}(g)).
\end{eqnarray*}
Hence, the desired conclusion follows from the choice of $N$ and Theorem~\ref{Theorem36}(iii).
\end{proof}

\begin{lemma}\label{Lemma38}
Let $f \in \mathbb{R}[x, y]$ be $y$-regular and $\varphi$ be a Newton--Puiseux root of $f.$ For all $N > \mathcal N(\deg f),$ the following conditions are equivalent:
\begin{enumerate}[{\rm (i)}]
\item $\varphi$ is of multiplicity greater than $1$ (as a Newton--Puiseux root of $f$).
\item $\varphi$ is a Newton--Puiseux root of $\frac{\partial f}{\partial y}.$ 
\item $\varphi \mod x^N \in \mathbf{P}_N(\frac{\partial f}{\partial y}).$
\end{enumerate}
\end{lemma}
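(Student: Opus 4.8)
The plan is to establish the two equivalences $(i)\Leftrightarrow(ii)$ and $(ii)\Leftrightarrow(iii)$; of these, $(ii)\Rightarrow(iii)$ is immediate from the definition of $\mathbf{P}_N(\cdot)$, and $(i)\Leftrightarrow(ii)$ is classical and requires no restriction on $N$. First I would observe that we may assume $\ord f\geqslant 2$ (otherwise $\mathbf{P}(f)$ is empty or a single root of multiplicity one, and there is nothing to prove), so that $\frac{\partial f}{\partial y}$ is again $y$-regular: its lowest-order homogeneous part, $\frac{\partial f_m}{\partial y}$ with $m=\ord f$, satisfies $\frac{\partial f_m}{\partial y}(0,1)=m\,f_m(0,1)\neq 0$; hence $\mathbf{P}(\frac{\partial f}{\partial y})$ and $\mathbf{P}_N(\frac{\partial f}{\partial y})$ are well defined. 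Writing $f=u(x,y)\prod_{\eta\in\mathbf{P}(f)}(y-\eta(x))^{\mult_f(\eta)}$ with $u(0,0)\neq 0$ (Puiseux's theorem) and differentiating with respect to $y$, one sees that if $\mult_f(\varphi)\geqslant 2$ then every summand of $\frac{\partial f}{\partial y}$ is divisible by $y-\varphi(x)$, so $\varphi\in\mathbf{P}(\frac{\partial f}{\partial y})$; while if $\mult_f(\varphi)=1$ then $f=(y-\varphi(x))\,w(x,y)$ with $w(x,\varphi(x))\not\equiv 0$, whence $\frac{\partial f}{\partial y}(x,\varphi(x))=w(x,\varphi(x))\not\equiv 0$ and $\varphi\notin\mathbf{P}(\frac{\partial f}{\partial y})$.

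The substance of the lemma is the implication $(iii)\Rightarrow(ii)$, which is the only place where the hypothesis $N>\mathcal{N}(\deg f)$ is used in an essential way. Assuming $\varphi\mod x^N\in\mathbf{P}_N(\frac{\partial f}{\partial y})$, I would pick a Newton--Puiseux root $\psi$ of $\frac{\partial f}{\partial y}$ with $\ord(\psi-\varphi)\geqslant N$ and show that necessarily $\psi=\varphi$, which is exactly $(ii)$. Suppose $\psi\neq\varphi$. If $\psi\in\mathbf{P}(f)$, then $\psi$ and $\varphi$ are distinct Newton--Puiseux roots of $f$ and Theorem~\ref{Theorem36}(i) gives $\ord(\psi-\varphi)\leqslant\mathcal{N}(\deg f)<N$, a contradiction; so we may assume $\psi\notin\mathbf{P}(f)$. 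Then $f(x,\psi(x))\not\equiv 0$ and the factors $\psi-\eta$ ($\eta\in\mathbf{P}(f)$) are nonzero Puiseux series, so evaluating the logarithmic-derivative identity $\dfrac{\partial f/\partial y}{f}=\dfrac{\partial u/\partial y}{u}+\sum_{\eta\in\mathbf{P}(f)}\dfrac{\mult_f(\eta)}{y-\eta}$ at $y=\psi(x)$ and using $\frac{\partial f}{\partial y}(x,\psi(x))\equiv 0$ yields
$$\sum_{\eta\in\mathbf{P}(f)}\frac{\mult_f(\eta)}{\psi-\eta}=-\frac{\partial u/\partial y(x,\psi(x))}{u(x,\psi(x))},$$
whose right-hand side has order $\geqslant 0$ since $u(x,\psi(x))$ has order $0$.

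If $\mathbf{P}(f)=\{\varphi\}$ this identity forces $\ord(\psi-\varphi)\leqslant 0$, which is impossible; so $\mathbf{P}(f)$ has at least two elements and $h_0:=\max\{\ord(\varphi-\eta):\eta\in\mathbf{P}(f),\ \eta\neq\varphi\}$ is a positive rational. Were $\ord(\psi-\varphi)>h_0$, the ultrametric inequality would give $\ord(\psi-\eta)=\ord(\varphi-\eta)\leqslant h_0$ for every $\eta\neq\varphi$, so in the displayed sum the single term $\frac{\mult_f(\varphi)}{\psi-\varphi}$ would have order $-\ord(\psi-\varphi)<-h_0$, strictly below the order $\geqslant-h_0$ of every other term; the sum would then have order $<0$, contradicting its right-hand side. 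Hence $\ord(\psi-\varphi)\leqslant h_0\leqslant\mathcal{N}(\deg f)<N$ (the middle inequality by Theorem~\ref{Theorem36}(i) once more), contradicting $\ord(\psi-\varphi)\geqslant N$. Thus $\psi=\varphi$ and $\varphi\in\mathbf{P}(\frac{\partial f}{\partial y})$.

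I expect the last estimate to be the main obstacle: one must rule out that a Newton--Puiseux root of $\frac{\partial f}{\partial y}$ has contact order with a root $\varphi$ of $f$ exceeding the contact order of $\varphi$ with the remaining roots of $f$. The coarser bound available from Lemma~\ref{Lemma35} is not sharp enough for this, since it only yields $\ord f(x,\psi(x))\leqslant(\deg f-1)^2+1=2\mathcal{N}(\deg f)$, hence $\ord(\psi-\varphi)\leqslant 2\mathcal{N}(\deg f)$, which need not be $<N$; it is the logarithmic-derivative computation that produces the sharp bound $\mathcal{N}(\deg f)$.
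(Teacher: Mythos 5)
Your proposal is correct, and for the one substantive implication, (iii)~$\Rightarrow$~(ii), it takes a genuinely different (more self-contained) route than the paper. The paper's proof is three lines: given $\gamma\in\mathbf{P}(\frac{\partial f}{\partial y})$ with $\ord(\gamma-\varphi)\geqslant N$, it invokes the Kuo--Lu lemma \cite[Lemma~3.3]{Kuo1977} as a black box to produce a root $\psi$ of $f$ with $\ord(\psi-\varphi)=\ord(\gamma-\varphi)>\mathcal N(\deg f)$, and then Theorem~\ref{Theorem36}(i) forces $\psi=\varphi$ and hence $\gamma=\varphi$. You avoid the citation and instead re-derive precisely the consequence of Kuo--Lu that is needed: the case $\psi\in\mathbf{P}(f)$ is killed directly by Theorem~\ref{Theorem36}(i), and in the case $\psi\notin\mathbf{P}(f)$ your logarithmic-derivative identity combined with the ultrametric inequality shows that $\ord(\psi-\varphi)$ cannot exceed the maximal contact of $\varphi$ with the remaining roots of $f$, which is at most $\mathcal N(\deg f)$ again by Theorem~\ref{Theorem36}(i). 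This computation is essentially the standard proof of the Kuo--Lu lemma specialized to the situation at hand, so the two arguments rest on the same mechanism and the same quantitative input; what yours buys is self-containedness (no tree-model machinery, no external lemma), at the cost of length. Your closing observation --- that Lemma~\ref{Lemma35} alone only yields $\ord(\psi-\varphi)\leqslant 2\mathcal N(\deg f)$, which is not below $N$ --- is accurate and correctly identifies why some form of the Kuo--Lu contact-transfer is indispensable here. Your handling of (i)~$\Leftrightarrow$~(ii) and (ii)~$\Rightarrow$~(iii), which the paper dismisses as clear, is also sound, including the preliminary check that $\frac{\partial f}{\partial y}$ is again $y$-regular.
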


\begin{proof} 
The implications (i) $\Rightarrow $ (ii) $\Rightarrow $ (iii) are clear.

(iii) $\Rightarrow $ (i): By assumption, there is a Newton--Puiseux root $\gamma$ of $\frac{\partial f}{\partial y}$ such that 
\begin{eqnarray*}
\ord (\gamma - \varphi) &\geqslant& N.
\end{eqnarray*}
In view of \cite[Lemma 3.3]{Kuo1977} (see also~\cite[Corollary 3.1]{NguyenHD2019}), there is a Newton--Puiseux root $\psi$ of $f$ such that 
\begin{eqnarray*}
\ord (\psi - \varphi) &=& \ord (\gamma - \varphi) \ \geqslant \ N \ > \ \mathcal{N}(\deg f),
\end{eqnarray*}
which, together with Theorem~\ref{Theorem36}(i), yields $\psi = \varphi,$ and so $\gamma = \varphi.$
\end{proof}

\begin{theorem}\label{Theorem39}
Let $f \in \mathbb{R}[x, y]$ be $y$-regular, $\varphi$ be a Newton--Puiseux root of $f$ and $p \geqslant 1.$
For all $N > \mathcal N(\deg f),$ the following conditions are equivalent:
\begin{enumerate}[{\rm (i)}]
\item $\varphi$ is of multiplicity $p$ (as a Newton--Puiseux root of $f$).

\item $\varphi$ is a Newton--Puiseux root of $\frac{\partial^{k} f}{\partial y^{k}}$ for all non negative integers $k \leqslant p - 1$ but not of $\frac{\partial^{p} f}{\partial y^{p}}.$

\item $\varphi\mod x^N\in \mathbf{P}_N(\frac{\partial^{k} f}{\partial y^{k}})$ for all non negative integers $k \leqslant p - 1$ and $\varphi \mod x^N \not \in \mathbf{P}_N(\frac{\partial^{p} f}{\partial y^{p}}).$
\end{enumerate}
\end{theorem}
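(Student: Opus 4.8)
The plan is to split the three-way equivalence into the two pieces (i)$\Leftrightarrow$(ii) and (ii)$\Leftrightarrow$(iii), proved by different arguments. The first rests on the classical fact that one differentiation in $y$ lowers the multiplicity of a Newton--Puiseux root by exactly one; the second is proved by induction on $p$, the base case being Lemma~\ref{Lemma38} and the inductive step passing from the pair $(f,p)$ to $(\partial f/\partial y,\,p-1)$.

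The preparatory observations are as follows. Since $f$ is $y$-regular of order $m:=\ord f$, writing $f=f_m+f_{m+1}+\cdots$ with $f_m(0,1)\neq 0$ one reads off at once that for $0\leqslant k\leqslant m$ the polynomial $\partial^k f/\partial y^k$ is again $y$-regular, of order $m-k$, and clearly $\deg(\partial^k f/\partial y^k)\leqslant\deg f$; since $\mathcal N(\cdot)$ is non-decreasing on the positive integers, the hypothesis $N>\mathcal N(\deg f)$ stays in force for every $y$-derivative of $f$ of positive degree. For the multiplicity fact, let $h$ be $y$-regular with Newton--Puiseux root $\psi$ and expand $h(x,\psi(x)+z)$ as a power series in $z$ over the Puiseux series in $x$: by the factorization $h=u\prod_\phi(y-\phi)^{\mult_h(\phi)}$ with $u(0,0)\neq 0$ this equals $z^{q}G(x,z)$ with $q=\mult_h(\psi)$ and $G(x,0)\not\equiv 0$, so, since $\frac{\partial}{\partial z}h(x,\psi(x)+z)=\frac{\partial h}{\partial y}(x,\psi(x)+z)$, differentiation yields $z^{q-1}(qG+z\,\partial G/\partial z)$, of order $q-1$ in $z$; hence $\mult_{\partial h/\partial y}(\psi)=\mult_h(\psi)-1$, with the convention that ``multiplicity $0$'' means ``not a Newton--Puiseux root''. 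Applying this successively to $f,\partial f/\partial y,\partial^2 f/\partial y^2,\dots$ — legitimate as long as $\varphi$ survives as a root, i.e.\ for $k<\mult_f(\varphi)$, and $\mult_f(\varphi)\leqslant m$ by the Puiseux factorization — shows that $\mult_{\partial^k f/\partial y^k}(\varphi)=\mult_f(\varphi)-k$ for $k\leqslant\mult_f(\varphi)$, so that $\varphi\in\mathbf{P}(\partial^k f/\partial y^k)$ exactly for $k<\mult_f(\varphi)$. Reading this off for $k=0,\dots,p$ shows that (ii) forces $\mult_f(\varphi)\geqslant p$ (a smaller value would make $\varphi$ fail to be a root of $\partial^{\mult_f(\varphi)}f/\partial y^{\mult_f(\varphi)}$ with $\mult_f(\varphi)\leqslant p-1$) and then $\mult_f(\varphi)\leqslant p$ (a larger value would make $\varphi$ a root of $\partial^p f/\partial y^p$), and conversely; thus (i)$\Leftrightarrow$(ii).

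For (ii)$\Leftrightarrow$(iii) I would induct on $p$. When $p=1$, after discarding the clauses that hold automatically because $\varphi\in\mathbf{P}(f)$, condition (ii) reduces to $\varphi\notin\mathbf{P}(\partial f/\partial y)$ and condition (iii) to $\varphi\mod x^N\notin\mathbf{P}_N(\partial f/\partial y)$, and these are equivalent by Lemma~\ref{Lemma38} (they are the negations of its conditions (ii) and (iii)). For the inductive step let $p\geqslant 2$ and put $h:=\partial f/\partial y$, which is $y$-regular with $\deg h\leqslant\deg f$. If $\varphi\notin\mathbf{P}(h)$, then the clause $k=1$ of (ii) for $(f,p)$ fails, so (ii) is false; and the clause $k=1$ of (iii) for $(f,p)$, namely $\varphi\mod x^N\in\mathbf{P}_N(h)$, is by Lemma~\ref{Lemma38} equivalent to $\varphi\in\mathbf{P}(h)$ and hence also fails, so (iii) is false too and the equivalence holds vacuously. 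If $\varphi\in\mathbf{P}(h)$, then $h$ is $y$-regular, $\varphi$ is a Newton--Puiseux root of $h$, $p-1\geqslant 1$ and $N>\mathcal N(\deg f)\geqslant\mathcal N(\deg h)$, so the inductive hypothesis applies to $(h,\varphi,p-1)$; the substitution $k=j+1$ identifies condition (ii) for $(h,p-1)$, augmented by the true clause $\varphi\in\mathbf{P}(f)$, with condition (ii) for $(f,p)$, and likewise condition (iii) for $(h,p-1)$, augmented by $\varphi\mod x^N\in\mathbf{P}_N(f)$ (true since $\varphi\in\mathbf{P}(f)$), with condition (iii) for $(f,p)$, whence the inductive hypothesis transports (ii)$\Leftrightarrow$(iii) to $(f,p)$.

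The step I expect to be the real obstacle — and the one carrying the arithmetic content — is certifying the \emph{non-membership} ``$\varphi\mod x^N\notin\mathbf{P}_N(\partial^p f/\partial y^p)$'' from ``$\varphi\notin\mathbf{P}(\partial^p f/\partial y^p)$'' at the fixed truncation order $N>\mathcal N(\deg f)$. This is precisely what Lemma~\ref{Lemma38} supplies at the bottom of the induction, and the only reason it can be propagated upward without enlarging $N$ is that differentiation does not increase the degree, so $\mathcal N(\deg f)$ never has to grow; everything else is bookkeeping. One should only remember to dispose of the innocuous degenerate case in which $\partial^k f/\partial y^k$ has order $0$ and therefore no Newton--Puiseux roots, which can occur only once $k\geqslant\mult_f(\varphi)$ and is absorbed into the ``$\varphi\notin\mathbf{P}(h)$'' branch above.
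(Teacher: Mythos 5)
Your proof is correct and follows essentially the same route as the paper's: both arguments reduce everything to Lemma~\ref{Lemma38} applied to the successive $y$-derivatives of $f$, the only point of substance being that differentiation does not increase the degree, so the single threshold $N>\mathcal N(\deg f)$ remains valid at every level. The paper runs its induction on $k$ inside the fixed $f$ (and dismisses (i)$\Leftrightarrow$(ii) as clear), whereas you induct on $p$ by passing to $\partial f/\partial y$ and spell out the multiplicity-drops-by-one argument, but these are cosmetic reorganizations of the same proof.
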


\begin{proof} 
(i) $\Leftrightarrow $ (ii): This is clear.

(ii) $\Rightarrow $ (iii): Obviously, $\varphi \mod x^N \in \mathbf{P}_N(\frac{\partial^{k} f}{\partial y^{k}})$ for $k\leqslant p - 1.$
Furthermore, $\varphi \mod x^N \not \in \mathbf{P}_N(\frac{\partial^{p} f}{\partial y^{p}})$ because otherwise it follows from 
Lemma~\ref{Lemma38} that $\varphi$ is a Newton--Puiseux root of $\frac{\partial^{p} f}{\partial y^{p}},$ which is a contradiction.
Therefore (iii) holds.

(iii) $\Rightarrow $ (ii):
Let us show that $\varphi$ is a Newton--Puiseux root of $\frac{\partial^{k} f}{\partial y^{k}}$ for all non negative integer $k\leqslant p-1.$
We proceed by induction on $k.$
Clearly the statement holds for $k = 0.$
Assume that the statement holds for $k = l \geqslant 0.$
We need to show that it also holds for $k = l + 1 \leqslant p - 1.$
But, in view of Lemma~\ref{Lemma38}, this is a direct consequence of the induction assumption, the assumption $\varphi \mod x^N \in \mathbf{P}_N(\frac{\partial^{l+1} f}{\partial y^{l+1}})$ and the inequalities 
$$N > \mathcal N(\deg f) \geqslant \mathcal N\left(\deg \frac{\partial^{l} f}{\partial y^{l}}\right).$$
Finally, since $\varphi\mod x^N\not\in \mathbf{P}_N(\frac{\partial^{p} f}{\partial y^{p}}),$ it follows from Lemma~\ref{Lemma38} that $\varphi$ is not a Newton--Puiseux root of $\frac{\partial^{p} f}{\partial y^{p}}.$
\end{proof}

The next corollary shows that multiplicities of Newton--Puiseux roots can be computed based on their truncations.

\begin{corollary}
Let $f \in \mathbb{R}[x, y]$ be $y$-regular, $\varphi$ be a Newton--Puiseux root of $f$ and $\varphi_N := \varphi \textrm{ mod } x^N.$ Then for any $N > \mathcal{N}(\deg {f})$ we have
\begin{eqnarray*}
\mult_f(\varphi) &=& \mathrm{ord} f(x, \varphi_{N + 1} + cx^{N + 1}) - \mathrm{ord} f(x, \varphi_{N} + cx^{N}),
\end{eqnarray*}
where $c$ is a generic constant.
\end{corollary}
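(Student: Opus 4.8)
The plan is to reduce the computation of the order of $f$ along a Puiseux series to a sum of orders of differences of Newton--Puiseux roots. Write $\varphi_{N+1} := \varphi \bmod x^{N+1}$, and set $\zeta := \varphi_N + c x^N$ and $\zeta' := \varphi_{N+1} + c x^{N+1}$, so that the asserted identity becomes $\mult_f(\varphi) = \ord f(x, \zeta'(x)) - \ord f(x, \zeta(x))$. By Weierstrass's preparation theorem together with Puiseux's theorem,
\[
f(x,y) \ = \ u(x,y) \prod_{\psi \in \mathbf{P}(f)} (y - \psi(x))^{\mult_f(\psi)}, \qquad u \in \mathbb{C}\{x,y\}, \quad u(0,0) \ne 0.
\]
By the normalization $n \leqslant n_1$ in the definition of Puiseux series, every $\psi \in \mathbf{P}(f)$ has $\ord \psi \geqslant 1$; hence $\ord \zeta \geqslant 1$ and $\ord \zeta' \geqslant 1$ (this is clear too in the degenerate case $\varphi_N = 0$, where $\zeta = c x^N$), so that $\ord u(x,\zeta(x)) = \ord u(x,\zeta'(x)) = 0$ and
\[
\ord f(x,\eta(x)) \ = \ \sum_{\psi \in \mathbf{P}(f)} \mult_f(\psi) \cdot \ord(\eta - \psi) \qquad \text{for } \eta \in \{\zeta, \zeta'\}.
\]

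Next I would evaluate each summand, separating the index $\psi = \varphi$ from $\psi \ne \varphi$. For $\psi \in \mathbf{P}(f)$ with $\psi \ne \varphi$, Theorem~\ref{Theorem36}(i) gives $\ord(\varphi - \psi) \leqslant \mathcal{N}(\deg f) < N$. Since $\varphi_N$ (resp.\ $\varphi_{N+1}$) retains every term of $\varphi$ of exponent $< N$ (resp.\ $< N+1$) and $\ord(\varphi - \psi) < N$, the series $\varphi_N - \psi$ and $\varphi_{N+1} - \psi$ still have order $\ord(\varphi - \psi)$, and adding the monomial $c x^N$ (resp.\ $c x^{N+1}$), of strictly larger order, leaves the order unchanged; thus $\ord(\zeta - \psi) = \ord(\zeta' - \psi) = \ord(\varphi - \psi)$. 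For $\psi = \varphi$ we have $\zeta - \varphi = c x^N - (\varphi - \varphi_N)$, where the tail $\varphi - \varphi_N$ has order $\geqslant N$; taking $c$ different from $0$ and from the coefficient of $x^N$ in $\varphi$ rules out cancellation of the $x^N$-term, so $\ord(\zeta - \varphi) = N$, and likewise $\ord(\zeta' - \varphi) = N+1$ as long as $c$ also differs from the coefficient of $x^{N+1}$ in $\varphi$. Hence it suffices to pick $c$ outside a fixed finite set, i.e.\ $c$ generic.

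The last step is to subtract. In the two expansions $\ord f(x,\zeta(x)) = \sum_{\psi} \mult_f(\psi)\,\ord(\zeta - \psi)$ and $\ord f(x,\zeta'(x)) = \sum_{\psi} \mult_f(\psi)\,\ord(\zeta' - \psi)$, all terms indexed by $\psi \ne \varphi$ coincide, while the term indexed by $\varphi$ equals $\mult_f(\varphi)\cdot N$ in the first and $\mult_f(\varphi)\cdot(N+1)$ in the second. Therefore
\[
\ord f(x,\zeta'(x)) - \ord f(x,\zeta(x)) \ = \ \mult_f(\varphi)\bigl((N+1)-N\bigr) \ = \ \mult_f(\varphi),
\]
which is the claim.

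The whole argument is short, being essentially the bookkeeping already performed in the proof of Theorem~\ref{Theorem36}(iv). The only points that call for a little care --- and the closest thing here to an obstacle --- are pinning down what ``generic $c$'' means (it is enough to avoid $0$ together with the coefficients of $x^N$ and $x^{N+1}$ in $\varphi$, which also guarantees $\zeta,\zeta'\notin\mathbf{P}(f)$, so the orders above are finite) and checking the degenerate situations in which $\varphi$ is a finite Puiseux expansion or $\varphi_N = 0$; in every such case the leading term of $\zeta - \varphi$, resp.\ $\zeta' - \varphi$, is simply $c x^N$, resp.\ $c x^{N+1}$, and the case analysis above goes through verbatim.
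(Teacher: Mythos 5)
Your proof is correct and follows essentially the same route as the paper's: both expand $f$ via its Puiseux factorization, use Theorem~\ref{Theorem36}(i) to see that the contributions of the roots $\psi \ne \varphi$ are identical for the two test series $\varphi_N + cx^N$ and $\varphi_{N+1} + cx^{N+1}$, and read off the difference $\mult_f(\varphi)\bigl((N+1)-N\bigr)$ from the factor $(y-\varphi)^{\mult_f(\varphi)}$. Your write-up is merely more explicit than the paper's about which values of $c$ must be avoided and why the orders $\ord(\zeta-\psi)$ are unchanged by truncation.
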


\begin{proof}
By definition, for generic $c,$ the following two equalities are satisfied:
\begin{eqnarray*}
\ord (\varphi_N + c x^N - \varphi) & = & N, \\
\ord (\varphi_{N + 1} + c x^{N + 1} - \varphi) & = & N + 1.
\end{eqnarray*}
In light of Theorem~\ref{Theorem36}(i), we have for any $\psi \in \mathbf{P}(f) \setminus \{\varphi\},$
\begin{eqnarray*}
 \ord (\varphi - \psi) &=& \ord (\varphi_N + c x^N - \psi) \ = \  \ord (\varphi_{N + 1} + c x^{N + 1} - \psi).
\end{eqnarray*}
On the other hand, it follows from Puiseux's theorem that
$$f(x, y) = f_1(x, y) (y - \varphi(x))^{\mult_f(\varphi)},$$
where 
\begin{eqnarray*}
f_1(x, y) &=& u(x, y) \prod_{\psi \in \mathbf{P}(f) \setminus \{\varphi\}} \big(y - \psi(x) \big)^{\mult_f(\psi)}
\end{eqnarray*}
with $u \in \mathbb{C}\{x, y\}$ and $u(0, 0) \ne 0.$ Hence
\begin{eqnarray*}
\ord f_1(x, \varphi_N + c x^N) 
&=& \sum_{\psi \in \mathbf{P}(f) \setminus \{\varphi\}} \ord (\varphi_N + c x^N - \psi) \, {\mult_f(\psi)} \\
&=& \sum_{\psi \in \mathbf{P}(f) \setminus \{\varphi\}} \ord (\varphi_{N + 1} + c x^{N + 1} - \psi) \, {\mult_f(\psi)} \\
&=& \ord f_1(x, \varphi_{N + 1} + c x^{N + 1}). 
\end{eqnarray*}
Therefore,
\begin{eqnarray*}
\mult_f(\varphi) &=& \ord f(x, \varphi_{N + 1} + c x^{N + 1}) - \ord f(x, \varphi_N + c x^N),
\end{eqnarray*}
which is the desired conclusion.
\end{proof}

\begin{remark}{\rm
Let $f \in \mathbb{R}[x, y]$ be a non-constant polynomial such that the origin is a non isolated zero of $f.$
Then for small $r > 0,$ the intersection of the real plane algebraic curve $f^{-1}(0)$ with the punctured disk $\mathbb{B}_r \setminus \{(0, 0)\}$ has a fixed number of connected components, each one homeomorphic to a line. The germ at the origin of such a connected component will be called {\em half-branch} at the origin of $f^{-1}(0).$ 

Assume that $f$ is $y$-regular and let $C$ be a half-branch at the origin of the curve $f^{-1}(0)$ in $\mathbb{R}^2.$ Then $C$ is not tangent to the $y$-axis. Hence, the first coordinate $x$ tends to $0^+$ or $0^{-}$ along $C.$ We say that $C$ is a {\em right half-branch} in the first case and a {\em left half-branch} in the second case. If $C$ is a right (resp., left) half-branch, there exist $\delta > 0$ and a semi-algebraic continuous function $\varphi \colon [0, \delta) \to \mathbb{R}$ with $\varphi(0) = 0$ such that $C$ is the germ of the curve $(x = t, y = \varphi(t))$ (resp., $(x = -t, y = \varphi(t))$) as $t \to 0^{+};$ moreover we have $\varphi(t)=\sum_{k\geqslant 0}c_k t^{\frac{n_k}{n}} \in \mathbf{P}^{+}(f)$ (resp., $\varphi \in \mathbf{P}^{-}(f)$). 
 
Conversely, each Puiseux series $\varphi \in \mathbf{P}^{+}(f)$ (resp., $\varphi \in \mathbf{P}^{-}(f)$) determines a right (resp., left) half-branch at the origin of the curve $f^{-1}(0).$
}\end{remark}

\section{The existence of limits of real bivariate rational functions} \label{Section4}

In this section, we present necessary and sufficient conditions for the existence of limits of real bivariate rational functions. 

Let $f, g \in \mathbb{R}[x, y]$ be nonzero polynomials and $(a, b) \in \mathbb{R}^2.$ 
The existence of the limit $\lim_{(x, y) \to (a, b)}\frac{f(x, y)}{g(x, y)}$ does not depend on the particular choice of local coordinates. Hence, after a suitable translation we may assume that the given point $(a, b)$ is the origin $(0, 0) \in \mathbb{R}^2.$ If $g(0, 0)  \ne 0$ then the limit exists and equals to $\frac{f(0, 0)}{g(0, 0)}.$ 
If $f(0, 0)  \ne 0$ and $g(0, 0) = 0,$ then the limit does not exist. Therefore without loss of generality we will assume that $f(0, 0) = g(0, 0) = 0.$ 
For simplicity, let $m := \ord f, n := \ord g,$ and let $f_m, g_n$ be the homogeneous components of degree $m$ and $n$ of $f$ and $g,$ respectively.
Note that $m$ and $n$ are positive integers.

Let us start with the following simple observation.
\begin{lemma} \label{Lemma41}
Assume that the limit $\lim_{(x, y) \to (0, 0)} \frac{f(x, y)}{g(x, y)}$ exists and equals to $L \in \mathbb{R}.$ Then one of the following conditions holds
\begin{enumerate}[{\rm (i)}]
\item $m > n$ and $L = 0.$ 
\item $m  = n,$ $L \ne 0$ and $f_m \equiv L g_n.$
\end{enumerate}
Moreover, if $f$ and $g$ have no non-constant common divisor in $\mathbb{R}[x, y]$, then $g$ must have an isolated zero at the origin.
\end{lemma}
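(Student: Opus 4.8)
The strategy is to work in polar-like coordinates adapted to Puiseux expansions, exploiting the homogeneity of the lowest-order parts $f_m$ and $g_n$. First I would test the limit along straight lines $y = cx$ (or $x = cy$ for the vertical direction). Along such a line, $f(x, cx) = f_m(1, c) x^m + (\text{higher order})$ and similarly $g(x, cx) = g_n(1, c) x^n + (\text{higher order})$ (using $f_m(x, cx) = x^m f_m(1, c)$ by homogeneity), so the quotient behaves like $\frac{f_m(1, c)}{g_n(1, c)} x^{m-n}$ to leading order whenever $g_n(1, c) \neq 0$.

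**Key steps.** The argument splits according to whether $m > n$, $m = n$, or $m < n$; the fourth case $m < n$ must be shown impossible. If $m < n$: pick $c$ with $g_n(1, c) \neq 0$ (possible since $g_n \not\equiv 0$; if $g_n$ vanishes identically on all lines through the origin it is the zero polynomial, contradiction — and if $g_n(1,c)\equiv 0$ we instead use $g_n(c,1)$ and lines $x = cy$). Along $y = cx$ with $f_m(1,c) \neq 0$ the quotient has order $m - n < 0$, so it blows up, contradicting existence of a finite limit $L$; if instead $f_m(1,c) = 0$ for every admissible $c$ then $f_m \equiv 0$, impossible. Hence $m \geq n$. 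If $m = n$: comparing the limit along two generic lines $y = c_1 x$ and $y = c_2 x$ forces $\frac{f_m(1, c_1)}{g_n(1, c_1)} = \frac{f_m(1, c_2)}{g_n(1, c_2)} = L$; by a density/polynomial-identity argument this gives $f_m(1, t) = L \, g_n(1, t)$ as polynomials in $t$, and a symmetric argument in the other chart yields $f_m \equiv L g_n$ as homogeneous polynomials; moreover $L \neq 0$ since $f_m \not\equiv 0$. If $m > n$: along any line $y = cx$ with $g_n(1,c) \neq 0$ the order is $m - n > 0$, giving limit $0$ along that line, so $L = 0$; one still must rule out that the limit could fail to exist, but that is not needed here — we are given it exists and equals $L$, and having pinned $L = 0$ along a line suffices. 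This establishes (i) and (ii).

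**The isolated-zero claim.** For the final assertion, suppose $f, g$ have no non-constant common divisor in $\mathbb{R}[x, y]$ but $g$ has a non-isolated zero at the origin. Then $g^{-1}(0)$ contains a real half-branch through the origin, parametrized (after a linear change making $g$ be $y$-regular, which does not affect any hypothesis) by a real Puiseux series $\varphi \in \mathbf{P}^{+}(g)$ or $\mathbf{P}^{-}(g)$ with $\varphi(0) = 0$. Along this half-branch $g(x, \varphi(x)) \equiv 0$. Since $f$ and $g$ share no non-constant factor, $f(x, \varphi(x)) \not\equiv 0$: otherwise $(y - \varphi(x))$ — equivalently the irreducible polynomial factor of $g$ it comes from — would divide $f$ in $\mathbb{C}\{x, y\}$ and hence, by comparing with the real factorization, $f$ and $g$ would have a common non-constant real divisor. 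Therefore $\frac{f(x, \varphi(x))}{g(x, \varphi(x))}$ has a nonzero numerator order and zero denominator, so it tends to $\pm\infty$ as $x \to 0^{\pm}$, contradicting the existence of a finite limit $L$. Hence $g$ has an isolated zero at the origin.

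**Main obstacle.** The delicate point is the transition from "equality of the two homogeneous-degree-one slices $f_m(1, t)$ and $L\, g_n(1, t)$ as univariate polynomials" to "$f_m \equiv L g_n$ as bivariate homogeneous forms", handling the directions where $g_n(1, c) = 0$ (including the vertical direction $x = 0$) — one must cover these using the second chart $x = cy$ and check the two chart computations are compatible, or simply invoke that a homogeneous polynomial of degree $m$ is determined by its dehomogenization together with its value at $(0,1)$. A secondary technical nuisance is justifying that choosing a generic $c$ simultaneously achieves $g_n(1,c) \neq 0$ and the desired order count; this is routine since each condition excludes only finitely many $c$.
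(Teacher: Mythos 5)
Your argument for parts (i)--(ii) is essentially the paper's: the paper tests the limit along rays $t \mapsto (t\bar x, t\bar y)$ through a generic direction $(\bar x,\bar y) \notin f_m^{-1}(0) \cup g_n^{-1}(0)$ and reads off $m \geqslant n$, $L = 0$ when $m > n$, and $f_m \equiv L g_n$ (hence $L \ne 0$) when $m = n$. Your lines $y = cx$ together with the second chart do the same job, and the homogenization point you flag is handled exactly as you suggest, via $f_m(x,y) - Lg_n(x,y) = x^m\bigl(f_m(1,y/x) - Lg_n(1,y/x)\bigr)$ for $x \ne 0$ and continuity.

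For the isolated-zero claim you take a genuinely different route. The paper argues algebraically: from the no-common-factor hypothesis it extracts $h = pf + qg$ with $h \in \mathbb{R}[x]\setminus\{0\}$ (a resultant), notes that the finite limit forces $g^{-1}(0) \cap \mathbb{B}_\delta \subset f^{-1}(0)$ and hence $g^{-1}(0) \cap \mathbb{B}_\delta \subset h^{-1}(0)$, and since $\{x=0\}$ is the only component of $h^{-1}(0)$ through the origin, a non-isolated zero would force $x$ to divide both $g$ and $f$. Your route via a real Puiseux half-branch of $g^{-1}(0)$ is viable and more in the spirit of the rest of the paper, but your final step has a genuine, if small, gap: you conclude that $f(x,\varphi(x))/g(x,\varphi(x))$ ``tends to $\pm\infty$,'' yet $g(x,\varphi(x)) \equiv 0$, so this quotient is undefined, and the points $(x,\varphi(x))$ lie in $g^{-1}(0)$ --- precisely the set excluded from the limit defining $L$. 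Behavior \emph{on} $g^{-1}(0)$ cannot by itself contradict the existence of the limit. The repair is one line: existence of the finite limit gives $|f - Lg| \leqslant \epsilon |g|$ on $\mathbb{B}_\delta \setminus g^{-1}(0)$ for some $\delta > 0$, and by continuity on all of $\mathbb{B}_\delta$; restricting to the half-branch yields $f(x,\varphi(x)) \equiv 0$, contradicting the non-vanishing you correctly deduced from the no-common-factor hypothesis (equivalently, approach the half-branch from nearby points where $g \ne 0$ and show the quotient is unbounded there). With that step made explicit, your proof is complete.
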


\begin{proof}
Let $L := \lim_{(x, y) \to (0, 0)} \frac{f(x, y)}{g(x, y)} \in \mathbb{R}.$ 
Take any $(\bar{x}, \bar{y}) \in \mathbb{R}^2 \setminus (f_m^{-1}(0) \cup g_n^{-1}(0)).$ Then for all $t > 0$ small enough, we have
$g(t \bar{x}, t \bar{y}) \ne 0$ and so
\begin{eqnarray*}
L \ = \ \lim_{t \to 0^+} \frac{f(t \bar{x}, t \bar{y})}{g(t \bar{x}, t \bar {y})}  
 &=&\lim_{t \to 0^+} \frac{t^m f_m(\bar{x}, \bar{y}) + \textrm{ higher order terms in } t}{t^n g_n(\bar{x}, \bar{y}) + \textrm{ higher order terms in } t} \\
 &=&
\begin{cases}
0 & \textrm{ if } m > n, \\
\frac{f_m(\bar{x}, \bar{y})}{g_n(\bar{x}, \bar{y})} & \textrm{ if } m = n, \\
\infty & \textrm{ otherwise.}
\end{cases}
\end{eqnarray*}
Hence $m \geqslant n$ and the strict inequality occurs only if $L = 0.$ 

Assume $m = n.$ Then $L = \frac{f_m(\bar{x}, \bar{y})}{g_n(\bar{x}, \bar{y})} \ne 0.$ Since this holds for any $(\bar{x}, \bar{y}) \in \mathbb{R}^2 \setminus (f_m^{-1}(0) \cup g_n^{-1}(0)),$ it is easy to see that $f_m(x, y)  = L g_n (x, y)$ for all $(x, y) \in \mathbb{R}^2.$

Now, suppose for contradiction that $g$ has a non isolated zero at the origin.
Since $f$ and $g$ have no non-constant common divisor in $\mathbb{R}[x, y]$, in view of \cite[Proposition 4, page~179]{Brieskorn1986} (see also \cite[Algebraic Corollary. 3]{Puente2002}), there exist $h \in \mathbb{R}[x] \backslash\{0\}$ and $p, q \in \mathbb{R}[x,y]$ such that $h=p f+q g$.
Fix $\epsilon > 0.$ There exists $\delta > 0$ such that for all $(x, y) \in \mathbb{B}_{\delta},$ 
$$|f(x, y) - L g(x, y) | \leqslant \epsilon |g(x, y)|.$$
In particular, $g^{-1}(0) \cap \mathbb{B}_{\delta} \subset f^{-1}(0).$ 
Consider $h$ as a polynomial in $x$ and $y$ but does not depend on $y$. 
Then 
$$g^{-1}(0)\cap \mathbb{B}_{\delta}\subset h^{-1}(0).$$
Since $g$ has a non isolated zero at the origin and $\{x=0\}$ is the only component of $h^{-1}(0)$ passing through the origin, it follows that $g^{-1}(0)\cap \mathbb{B}_{\delta}$ contains infinitely many points of $\{x=0\}$.
Thus $\{x=0\}\subset g^{-1}(0)$ and so $\{x=0\}\subset f^{-1}(0).$
Therefore $f$ and $g$ must have the common divisor $x$ which contradicts the assumption.
Consequently $g$ has an isolated zero at the origin.
\end{proof}

\begin{remark}[see also \cite{Zeng2020}]{\rm
If $f$ (or $g$) is $y$-regular, then $f$ and $g$ have no non-constant common divisor in $\mathbb{R}[x, y]$ if and only if 
$f$ and $g$ have no non-constant common divisor in $\mathbb{R}[x, y] \setminus \mathbb{R}[x].$ 
}\end{remark}

In light of Lemma~\ref{Lemma41}, replacing $f$ by $f - \frac{f_m(\bar{x}, \bar{y})}{g_n(\bar{x}, \bar{y})} g$ with  $(\bar{x}, \bar{y}) \in \mathbb{R}^2 \setminus (f_m^{-1}(0) \cup g_n^{-1}(0)),$ if $m = n,$ our problem can be reduced to that of determining if the limit $\lim_{(x, y) \to (0, 0)} \frac{f(x, y)}{g(x, y)}$ exists and equals to $0.$

\subsection{Case of isolated zeros of the denominator}

In this subsection, we will consider the case where the polynomial $g$ has {\em an isolated zero at the origin.}

\begin{lemma}\label{Lemma43}
Let $g$ have an isolated zero at the origin and assume that $g \geqslant 0$ in some neighborhood of the origin. There exist positive constants $r$ and $\delta$ such that for all $t \in (0, \delta),$ the following statements hold:
\begin{enumerate} [{\rm (i)}]
\item $t$ is a regular value of $g.$ 

\item $g^{-1}(t)\cap \mathbb B_r$ is a non-singular connected closed curve bounding a region containing the origin.
\end{enumerate}
\end{lemma}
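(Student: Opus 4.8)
The plan is to pick a ball $\mathbb{B}_r$ and a threshold $\delta$ so small that, for every $0<t<\delta$, the level curve $g^{-1}(t)$ meets no critical point of $g$ and is trapped inside $\mathbb{B}_r$, and then to read off its topology from the sublevel set $\{g\le t\}$ by a gradient-flow (Morse-theoretic) argument.

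First I would fix the constants. Since $g\ge 0$ on a neighborhood of the origin and the origin is an isolated zero of $g$, there is $r>0$ with $g>0$ on $\mathbb{B}_r\setminus\{0\}$; set $\delta_0:=\min_{\mathbb{S}_r}g>0$. A polynomial $g$ has only finitely many critical values (the set of critical values is semi-algebraic and, by Sard's theorem, of measure zero), so we may choose $\delta\in(0,\delta_0]$ such that $(0,\delta)$ contains no critical value of $g$. Fix $t\in(0,\delta)$; then (i) is immediate. Moreover $g\ge\delta_0>t$ on $\mathbb{S}_r$, so $g^{-1}(t)\cap\mathbb{B}_r$ lies in the open ball $\Int\mathbb{B}_r$; being the level set of a regular value of $g$, it is a compact $1$-dimensional $C^\infty$ submanifold of $\mathbb{R}^2$ without boundary, i.e.\ a finite disjoint union of smoothly embedded circles. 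This already yields everything in (ii) except connectedness and the claim about the bounded region.

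Next I would study the compact sublevel set $K_t:=\{x\in\mathbb{B}_r : g(x)\le t\}$. Since $g>t$ on $\mathbb{S}_r$ we have $K_t\subset\Int\mathbb{B}_r$, and $0\in K_t$ is an interior point because $g(0)=0<t$; hence $K_t$ is a compact smooth surface whose boundary $\partial K_t=g^{-1}(t)\cap\mathbb{B}_r$ is nonempty (as $K_t$ is a proper nonempty compact subset of $\mathbb{R}^2$). For $0<s<t$ the region $\{x\in\mathbb{B}_r:s\le g(x)\le t\}$ contains no critical point of $g$ (all its $g$-values lie in $(0,\delta)$), so on it the flow of $-\nabla g/\|\nabla g\|^{2}$ is well defined and decreases $g$ at unit rate; consequently a trajectory issued from a point of $K_t$ never reaches $\mathbb{S}_r$ (where $g\ge\delta_0>t$) and meets the level $\{g=s\}$ after a finite time depending continuously on the initial point. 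This gives a deformation retraction of $K_t$ onto $K_s$, so every inclusion $K_s\hookrightarrow K_t$ with $0<s<t<\delta$ is a homotopy equivalence. Because $\bigcap_{0<s<t}K_s=\{x\in\mathbb{B}_r:g(x)=0\}=\{0\}$, the continuity of \v{C}ech cohomology for this decreasing family of compacta forces $K_t$ to have the cohomology of a point; being a connected compact surface with nonempty boundary, $K_t$ is therefore homeomorphic to the closed $2$-disk. Hence $g^{-1}(t)\cap\mathbb{B}_r=\partial K_t$ is a single smoothly embedded circle, and by the Jordan curve theorem it bounds the bounded region $\Int K_t=\{x\in\mathbb{B}_r:g(x)<t\}$, which contains the origin; this establishes (ii).

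The hard part will be the last topological step: passing from ``$K_t$ is homotopy equivalent to the arbitrarily small sublevel sets $K_s$, whose common intersection is $\{0\}$'' to ``$K_t$ is contractible, hence a $2$-disk.'' This requires an honest limiting principle — the continuity of \v{C}ech cohomology for a decreasing sequence of compacta (or an ad hoc ``peeling of collars'' argument) — together with the classification of compact surfaces with boundary. Alternatively one could first deduce, from semi-algebraic (Hardt) triviality, that the homeomorphism type of $K_t$ is the same for all $t\in(0,\delta)$ and then run the same shrinking argument, or simply cite the known local structure of level sets of a nonnegative (semi-)analytic function near an isolated zero. Everything else is routine: the hypotheses $g\ge 0$ and ``isolated zero'' are precisely what keep the relevant level and sublevel sets compact and confined to $\mathbb{B}_r$, after which the standard ``no critical points $\Rightarrow$ product/retraction structure'' of Morse theory does the rest.
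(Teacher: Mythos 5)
Your proof is correct, but for the connectedness claim in (ii) you take a genuinely different route from the paper. You pass to the sublevel sets $K_t=\{x\in\mathbb{B}_r : g(x)\leqslant t\}$, use the normalized gradient flow to show that every inclusion $K_s\hookrightarrow K_t$ with $0<s<t<\delta$ is a homotopy equivalence, and then combine $\bigcap_{0<s<t}K_s=\{0\}$ with the continuity of \v{C}ech cohomology and the classification of compact surfaces with boundary to conclude that $K_t$ is a disk, whence $g^{-1}(t)\cap\mathbb{B}_r=\partial K_t$ is a single circle enclosing the origin. The paper stays entirely elementary: after observing, exactly as you do, that $g^{-1}(t)\cap\mathbb{B}_r$ is a finite disjoint union of circles, it rules out (a) a circle whose bounded Jordan region misses the origin and (b) two nested circles both enclosing the origin, by noting that in either case $g$ equals $t$ on the whole boundary of the intermediate region (a disk in case (a), an annulus in case (b)) and hence attains an interior local extremum there; this produces a critical point with critical value in the forbidden interval $(0,\delta_0)$. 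That argument needs only the Jordan curve theorem and the extreme value theorem, which is why the paper also arranges $\sup_{z\in\mathbb{B}_r}g(z)<\delta_0$ (so that \emph{any} critical point inside $\mathbb{B}_r$ with positive value gives a contradiction), a normalization you do not need. What each approach buys: yours is more systematic, isolates the Morse-theoretic content, and would survive in higher dimensions where the nested-curves picture is unavailable; the paper's disposes of exactly the step you flag as ``the hard part'' without invoking \v{C}ech continuity or surface classification. If you keep your version, do cite the continuity theorem for \v{C}ech cohomology of a decreasing family of compacta explicitly; alternatively, the interior-extremum trick replaces that entire limiting argument in two lines.
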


\begin{proof} 
By Sard's theorem, the set of critical values of $g$ is finite. In particular, there is $\delta_0 > 0$ such that $(0, \delta_0)$ does not contain a critical value of $g.$ Furthermore, by assumption, there exists $r > 0$ such that $g^{-1}(0)\cap{\mathbb B}_r = \{(0, 0)\}$ and
\begin{eqnarray*}
0 & < & \sup_{z\in \mathbb B_{r}} g(z) \ < \ \delta_0.
\end{eqnarray*}
Let $\delta > 0$ be such that $\displaystyle \inf_{z\in\mathbb S_r} g(z)  > \delta.$ By definition, $[0, \delta) \subset g(\mathbb B_{r}).$ Take any $t \in (0, \delta).$ Clearly, $t$ is a regular value of $g$ and satisfies
\begin{eqnarray*}
g^{-1}(t) \cap{\mathbb B}_r &=& g^{-1}(t) \cap \mathrm{int}( \mathbb{B}_r)  \ \ne \ \emptyset.
\end{eqnarray*}
Consequently, the set $g^{-1}(t) \cap{\mathbb B}_r$ is a compact one-dimensional manifold and so it is a disjoint union of  non-singular closed curves.

If there is a closed curve $\alpha$ in the union bounding an open and bounded region $M$ not containing the origin, then the polynomial $g$ must have a local extremum $z_1$ in $M$ and hence $z_1$ is a critical point of $g.$ We have
$$0 < g(z_1) \leqslant \sup_{z \in M} g(z) \leqslant \sup_{z \in \mathbb B_{r}} g(z) < \delta_0.$$ 
This contradicts the fact that $(0, \delta_0)$ does not contain a critical value of $g.$

Thus $g^{-1}(t)\cap \mathbb B_r$ is a disjoint union of closed curves, each bounds an open and bounded region containing $0.$
Let $\alpha_1,\alpha_2 \subset g^{-1}(t)\cap \mathbb B_r$ be such two curves and let $M_1,M_2$ be the open and bounded regions bounded by them, respectively. We must have either $M_1\subsetneq M_2$ or $M_2\subsetneq M_1.$
Without loss of generality, assume that $M_1\subsetneq M_2$.
It is clear that the polynomial $g$ must have a local extremum $z_2$ in $M_2\setminus (M_1\cup\alpha_2)$ and hence $z_2$ is a critical point of $g.$
Observe that
$$0 < g(z_2) \leqslant \sup_{z \in M_2} g(z) \leqslant \sup_{z \in \mathbb B_{r}} g(z) < \delta_0.$$ 
This also contradicts the fact that $(0, \delta_0)$ does not contain a critical value of $g.$ Therefore $g^{-1}(t)\cap\mathbb B_r$ must be a (non-singular) connected closed curve bounding a region containing $0.$ The lemma is proved.
\end{proof}

As in the introduction, let $F_{f, g}$ be the determinant of the Jacobian matrix of the mapping 
$$\mathbb{R}^2 \to \mathbb{R}^2, \quad (x, y) \mapsto (f(x, y), g(x, y)),$$
 i.e., 
$$F_{f, g} :=\frac{\partial f}{\partial x}\frac{\partial g}{\partial y}-\frac{\partial f}{\partial y}\frac{\partial g}{\partial x}.$$

\begin{remark}{\rm
(i) By definition, $F_{f,  g}(x, y) = 0$ if and only if the gradient vectors $\nabla f(x, y) $ and $\nabla g(x, y) $ are linearly dependent. 

(ii) If the origin is an isolated zero of $g,$ then it is a local extremum of $g;$ in particular, $\nabla g(0, 0) = 0$ and so $F_{f, g}(0, 0) = 0.$
}\end{remark}

\begin{lemma}\label{Lemma45}
Assume that $F_{f, g} \equiv 0.$ If $t \in \mathbb{R}$ is a regular value of $g$ then the restriction of $f$ on each connected component of $g^{-1}(t)$ is constant.
\end{lemma}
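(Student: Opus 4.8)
The plan is to combine the regular value theorem with the observation that $F_{f,g}\equiv 0$ forces $\nabla f$ and $\nabla g$ to be everywhere linearly dependent: on a level set $g^{-1}(t)$ with $t$ a regular value, $\nabla g$ is orthogonal to the curve, hence so is $\nabla f$, so $f$ has vanishing derivative along the curve and is therefore locally constant, hence constant on each connected component.

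First I would fix a regular value $t$ of $g$ and a connected component $C$ of $g^{-1}(t)$. Since $\nabla g(p)\neq 0$ for every $p\in C$, the implicit function theorem shows that $C$ is a one-dimensional $C^\infty$ submanifold of $\mathbb{R}^2$ whose tangent line at $p$ is $T_pC=\{v\in\mathbb{R}^2:\langle\nabla g(p),v\rangle=0\}$. Next I would invoke the hypothesis: the equation $F_{f,g}(p)=0$ says precisely that the $2\times 2$ matrix with rows $\nabla f(p)$ and $\nabla g(p)$ is singular, i.e.\ these two gradient vectors are linearly dependent; because $\nabla g(p)\neq 0$, this yields $\nabla f(p)=\lambda(p)\,\nabla g(p)$ for a scalar $\lambda(p)\in\mathbb{R}$ (with $\lambda(p)=0$ in the degenerate case $\nabla f(p)=0$). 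In particular $\nabla f(p)$ is orthogonal to $T_pC$ at every $p\in C$.

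It then follows that the smooth function $f|_C$ has identically vanishing differential on the connected manifold $C$, hence is locally constant and therefore constant; equivalently, for any $C^1$ path $\gamma\colon I\to C$ one computes $\frac{d}{ds}f(\gamma(s))=\langle\nabla f(\gamma(s)),\gamma'(s)\rangle=\lambda(\gamma(s))\,\langle\nabla g(\gamma(s)),\gamma'(s)\rangle=\lambda(\gamma(s))\,\frac{d}{ds}g(\gamma(s))=0$ since $g\circ\gamma\equiv t$. I do not anticipate a genuine obstacle here: the only steps requiring a word of care are deducing the scalar-multiple relation from linear dependence together with $\nabla g(p)\neq 0$, and recalling that a function with vanishing differential on a connected manifold is constant.
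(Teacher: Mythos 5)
Your proof is correct and follows essentially the same route as the paper: both arguments use that $F_{f,g}\equiv 0$ together with $\nabla g\neq 0$ on the level set gives $\nabla f=\lambda\,\nabla g$, and then the chain rule computation $\frac{d}{ds}f(\gamma(s))=\lambda(\gamma(s))\frac{d}{ds}g(\gamma(s))=0$ along curves in the level set. The only cosmetic difference is how connectedness is exploited — the paper joins two points by a piecewise smooth semi-algebraic path (via semi-algebraic connectivity), while you invoke the regular value theorem and the fact that a function with vanishing differential on a connected manifold is constant; both are valid.
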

\begin{proof}
Let $S$ be a connected component of $g^{-1}(t)$ and fix $z_1, z_2 \in S.$ We need to show $f(z_1) = f(z_2).$
Indeed, since $S$ is connected and semi-algebraic, there exists a piecewise smooth and continuous semi-algebraic curve $\alpha \colon [0, 1] \to \mathbb R^2$ lying in $S$ such that $\alpha(0) = z_1$ and $\alpha(1) = z_2.$ Breaking the curve into smooth pieces, it suffices to consider the case where the curve is smooth. Observe that for all $s \in [0, 1],$ we have $g(\alpha(s)) = t,$ $\nabla g(\alpha(s)) \ne 0$ and $F_{f, g}(\alpha(s)) = 0.$ Hence, there exists $\lambda(s) \in \mathbb{R}$ such that
\begin{eqnarray*}
\nabla f(\alpha({s}))  &=& \lambda (s) \nabla g(\alpha({s})).
\end{eqnarray*}
It follows that
\begin{eqnarray*}
\frac{d }{d{s}}(f \circ \alpha)({s})
&=& \left\langle \nabla  f(\alpha({s})), \frac{d \alpha({s})}{d{s}} \right\rangle \\
&=& \lambda(s) \left\langle \nabla  g(\alpha({s})), \frac{d \alpha({s})}{d{s}} \right\rangle \\
&=& \lambda({s}) \frac{d}{d{s}}(g \circ \alpha)({s}) \ = \ 0.
\end{eqnarray*}
So ${f}$ is constant on the curve $\alpha;$ in particular, $f(z_1) = f(z_2).$
\end{proof}

\begin{theorem}
Assume that $g$ has an isolated zero at the origin and that $F_{f, g} \equiv 0.$ We have
$$\displaystyle\lim_{(x, y) \to (0, 0)}\frac{f(x, y)}{g(x, y)} = 0 \quad \textrm{ if and only if } \quad \mathrm{ord} f > \mathrm{ord} g.$$
\end{theorem}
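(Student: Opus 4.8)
The plan is to reduce, via Lemmas~\ref{Lemma41},~\ref{Lemma43}~and~\ref{Lemma45}, the existence of the limit to a one‑variable limit along a single ray. First I would normalise the sign of $g$: since the origin is an isolated zero of $g,$ the polynomial $g$ is nowhere zero on some punctured ball, which is connected, so $g$ has constant sign there; replacing $g$ by $-g$ if necessary changes the sign of both the limit and of $F_{f, g},$ hence alters neither the hypotheses nor the conclusion, and we may assume $g \geqslant 0$ near the origin so that Lemma~\ref{Lemma43} applies. The implication ``$\Rightarrow$'' is then immediate from Lemma~\ref{Lemma41}: if the limit exists and equals $0 \in \mathbb{R},$ case (ii) of that lemma (which forces a nonzero limit) is excluded, so case (i) must hold, that is, $\ord f > \ord g.$

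For ``$\Leftarrow$,'' write $m := \ord f,$ $n := \ord g$ and assume $m > n.$ The key step is to show that, near the origin, $\frac{f}{g}$ depends only on the value of $g.$ Let $r, \delta > 0$ be as in Lemma~\ref{Lemma43}. For each $t \in (0, \delta),$ $t$ is a regular value of $g$ by Lemma~\ref{Lemma43}(i), so by Lemma~\ref{Lemma45} --- the single place where $F_{f, g} \equiv 0$ enters --- $f$ is constant on every connected component of $g^{-1}(t);$ since $g^{-1}(t) \cap \mathbb{B}_r$ is connected by Lemma~\ref{Lemma43}(ii), $f$ takes a single value there, which I denote $c(t).$ Hence, for every $z$ close enough to the origin (so that $z \in \mathbb{B}_r$ and $0 < g(z) < \delta$),
\begin{equation*}
\frac{f(z)}{g(z)} \ = \ \frac{c(g(z))}{g(z)}.
\end{equation*}

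It then remains to compute $\lim_{t \to 0^+} c(t)/t.$ I would pick $(\bar x, \bar y)$ with $g_n(\bar x, \bar y) \neq 0$ --- hence $g_n(\bar x, \bar y) > 0,$ since $g \geqslant 0$ near the origin --- and look along the ray $s \mapsto (s\bar x, s\bar y)$: for small $s > 0,$
\begin{equation*}
\frac{f(s\bar x, s\bar y)}{g(s\bar x, s\bar y)} \ = \ s^{\,m-n}\,\frac{f_m(\bar x, \bar y) + s\,f_{m+1}(\bar x, \bar y) + \cdots}{g_n(\bar x, \bar y) + s\,g_{n+1}(\bar x, \bar y) + \cdots} \ \longrightarrow \ 0 \quad \text{as } s \to 0^+,
\end{equation*}
because $m > n$ and the denominator tends to $g_n(\bar x, \bar y) \neq 0.$ Since moreover $s \mapsto g(s\bar x, s\bar y)$ is continuous, positive for small $s > 0,$ and tends to $0$ as $s \to 0^+,$ its image over a small interval $(0, s_0)$ contains some interval $(0, \delta'')$ with $0 < \delta'' \leqslant \delta$ (intermediate value theorem); since every $t \in (0, \delta'')$ equals $g(s\bar x, s\bar y)$ for some $s = s(t) \in (0, s_0)$ that can be taken to tend to $0^+$ as $t \to 0^+,$ the displayed identity gives $c(t)/t = \frac{f(s(t)\bar x, s(t)\bar y)}{g(s(t)\bar x, s(t)\bar y)} \to 0$ as $t \to 0^+.$ Finally, for arbitrary $z \to (0, 0)$ one has $g(z) \to 0^+,$ so $\frac{f(z)}{g(z)} = \frac{c(g(z))}{g(z)} \to 0,$ which is the claim.

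The main obstacle is the ``factorisation through $g$'' step, i.e.\ proving that $f$ is constant on each small level curve $g^{-1}(t) \cap \mathbb{B}_r$; this is where both hypotheses ($F_{f, g} \equiv 0$ and the isolated zero of $g$) are genuinely used, through Lemmas~\ref{Lemma43}~and~\ref{Lemma45}. Once that is in hand, the bivariate limit collapses to a one‑variable limit that any single ray with $g_n(\bar x, \bar y) \neq 0$ already evaluates, and the remaining verifications (constant sign of $g$ near the origin, the interval‑covering argument, and that $s \mapsto g(s\bar x, s\bar y)$ surjects onto a right neighbourhood of $0$) are routine.
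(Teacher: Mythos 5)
Your proof is correct and follows essentially the same route as the paper: necessity via Lemma~\ref{Lemma41}, and sufficiency by combining Lemma~\ref{Lemma43} with Lemma~\ref{Lemma45} to conclude that $f$ is constant on the connected level curves $g^{-1}(t)\cap\mathbb{B}_r$, thereby collapsing the problem to a one-variable limit. The only (harmless) difference is in the final evaluation: the paper first makes $f$ and $g$ $y$-regular and tests along the $y$-axis (using that each level curve bounds a region containing the origin and hence meets $\{x=0\}$), whereas you test along a generic ray with $g_n(\bar x,\bar y)\neq 0,$ which lets you skip the $y$-regularity reduction.
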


\begin{proof}
By Lemma~\ref{Lemma41}, it suffices to prove the sufficient condition. To this end, assume that $\mathrm{ord} f > \mathrm{ord} g$ and let $(x_k, y_k) \in \mathbb{R}^2 \setminus g^{-1}(0)$ be any sequence tending to $(0, 0).$ We need to show 
$$\displaystyle\lim_{k \to \infty} \frac{f(x_k, y_k)}{g(x_k, y_k)} = 0.$$

There is no loss of generality in assuming that $f$ and $g$ are $y$-regular. 
Indeed, by Lemma~\ref{Lemma31}, there is $c\in\mathbb R$ such that $\widetilde f(x,y):=f(x+cy,y)$ and $\widetilde g(x,y):=g(x+cy,y)$ are $y$-regular. 
We note the following facts:
\begin{itemize}
\item $\mathrm{ord} \widetilde f=\mathrm{ord} f;$
\item $\mathrm{ord} \widetilde g=\mathrm{ord} g;$
\item $\displaystyle\lim_{(x, y) \to (0, 0)}\frac{f(x, y)}{g(x, y)} = 0$ if and only if $\displaystyle\lim_{(x, y) \to (0, 0)}\frac{\widetilde f(x, y)}{\widetilde g(x, y)} = 0;$ and
\item $
F_{\widetilde f,\widetilde g}(x,y)\ =\ \left[\frac{\partial f}{\partial x}\left(c\frac{\partial g}{\partial x}+\frac{\partial g}{\partial y}\right)-\left(c\frac{\partial f}{\partial x}+\frac{\partial f}{\partial y}\right)\frac{\partial g}{\partial x}\right](x+cy,y)\ =\ F_{f,g}(x+cy,y)\ = \ 0
$
for all $(x,y)\in\mathbb R^2.$
\end{itemize}
Hence, if $f$ or $g$ is not $y$-regular, we replace $f$ and $g$ by $\widetilde f$ and $\widetilde g$ respectively.
Furthermore, replacing $g$ by $-g$ if necessary, we may assume that $g \geqslant 0$ in some neighborhood of the origin $(0, 0) \in \mathbb{R}^2.$
Let $r > 0$ and $\delta> 0$ be the constants determined by Lemma~\ref{Lemma43}. Then for all sufficiently large $k,$ we have $t_k := g(x_k, y_k) \in (0, \delta),$ and so $t_k$ is a regular value of $g$ and the set $g^{-1}(t_k)\cap \mathbb B_r$ is a non-singular connected closed curve bounding a region containing $(0, 0).$ The latter fact implies that $\big(g^{-1}(t_k) \cap \mathbb B_r \big) \cap \{x = 0\}$ contains at least a point $(0, \bar y_k).$
Obviously 
$$g(0, \bar y_k) = t_k \to 0 \quad \text{ as } \quad k\to\infty.$$
As the origin $(0, 0)$ is an isolated zero of $g$, we must have $\bar y_k \to 0.$
On the other hand, by Lemma~\ref{Lemma45}, $f(0, \bar y_k) = f(x_k, y_k).$ Therefore,
\begin{eqnarray*}
\lim_{k \to \infty} \frac{f(x_k, y_k)}{g(x_k, y_k)}   &=&  \lim_{k \to \infty} \frac{f(0, \bar y_k)}{g(0, \bar y_k)} \ = \ 0,
\end{eqnarray*}
where the second equality follows from the facts that $f$ and $g$ are $y$-regular of order $\mathrm{ord} f$ and $\mathrm{ord} g$ respectively with
$\mathrm{ord} f > \mathrm{ord} g.$
\end{proof}

We next consider the case $F_{f, g} \not \equiv 0.$ To this end, we need the following fact.

\begin{lemma}
For all but a finite number of $c \in \mathbb{R},$ the polynomials $\widetilde{f}(x,y) := f(x + cy, y),$ $\widetilde{g}(x, y) := g(x + cy, y),$ and  $F_{\widetilde{f}, \widetilde{g}}$ are $y$-regular.
\end{lemma}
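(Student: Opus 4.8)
The plan is to analyze, for each of the three polynomials $\widetilde f$, $\widetilde g$ and $F_{\widetilde f,\widetilde g}$, the homogeneous component of lowest order as a function of the shift parameter $c$, and to show that each of these lowest-degree forms is a nonzero polynomial in $c$ (hence vanishes for only finitely many $c$); the finite set of exceptional $c$ is then the union of these three finite sets. For $\widetilde f$ and $\widetilde g$ this is immediate from Lemma~\ref{Lemma31}: $\widetilde f$ is $y$-regular iff $f_m(c,1)\ne 0$, and since $f_m\not\equiv 0$ is a nonzero homogeneous polynomial, $c\mapsto f_m(c,1)$ is a nonzero univariate polynomial with finitely many roots; likewise for $g_n(c,1)$. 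So the only real work is the third polynomial.

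First I would compute $F_{\widetilde f,\widetilde g}$ in terms of $f,g$. Using the chain rule, as already recorded in the proof of the preceding theorem,
$$F_{\widetilde f,\widetilde g}(x,y)\ =\ F_{f,g}(x+cy,y).$$
Hence $\ord F_{\widetilde f,\widetilde g}=\ord F_{f,g}=:\ell$, and the lowest-order form of $F_{\widetilde f,\widetilde g}$ is $(F_{f,g})_\ell(x+cy,y)$, where $(F_{f,g})_\ell$ is the lowest-degree homogeneous component of $F_{f,g}$. By Lemma~\ref{Lemma31} applied to $F_{f,g}$, the polynomial $F_{\widetilde f,\widetilde g}$ is $y$-regular iff $(F_{f,g})_\ell(c,1)\ne 0$, which again fails for only finitely many $c$ — \emph{provided} $F_{f,g}\not\equiv 0$. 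So the key point to settle is: is $F_{f,g}\not\equiv 0$ in the situation at hand? This is exactly the standing hypothesis of this part of the paper ("We next consider the case $F_{f,g}\not\equiv 0$"), so I would simply invoke it; no hidden difficulty here.

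There is one subtlety I would flag, and I expect it to be the main point requiring care: the three conditions must hold \emph{simultaneously} for a common $c$, and moreover one wants $\widetilde f,\widetilde g$ to be $y$-regular so that the identity $F_{\widetilde f,\widetilde g}(x,y)=F_{f,g}(x+cy,y)$ and the formula $\ord F_{\widetilde f,\widetilde g}=\ord F_{f,g}$ are being used consistently. Since the exceptional sets $\{c: f_m(c,1)=0\}$, $\{c: g_n(c,1)=0\}$, $\{c: (F_{f,g})_\ell(c,1)=0\}$ are each finite (each being the zero set of a nonzero univariate polynomial), their union is finite, and for every $c$ outside this union all three of $\widetilde f$, $\widetilde g$, $F_{\widetilde f,\widetilde g}$ are $y$-regular. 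I would also remark that one may equivalently phrase the argument by noting that $y$-regularity of $f(x+cy,y)$ is equivalent to the line $\{x=cy\}$ not being contained in (indeed, not tangent at the origin to) the tangent cone $f_m^{-1}(0)$, and finitely many lines through the origin are excluded by each of the three tangent cones $f_m^{-1}(0)$, $g_n^{-1}(0)$, $(F_{f,g})_\ell^{-1}(0)$; this is the geometric content of the lemma. That completes the proof.
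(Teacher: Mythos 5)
Your proposal is correct and follows essentially the same route as the paper: establish the identity $F_{\widetilde f,\widetilde g}(x,y)=F_{f,g}(x+cy,y)$ by the chain rule and then apply Lemma~\ref{Lemma31} to each of $f$, $g$, $F_{f,g}$, excluding the finitely many $c$ at which one of the lowest-degree forms vanishes at $(c,1)$. Your explicit remark that $F_{f,g}\not\equiv 0$ (the standing hypothesis of this subsection) is needed for the third exceptional set to be finite is a point the paper leaves implicit, and is worth making.
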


\begin{proof}
By definition, we have
$$F_{\widetilde{f}, \widetilde{g}}(x, y) 
\ =\ \left[\frac{\partial f}{\partial x}\left(c \frac{\partial g}{\partial x} + \frac{\partial g}{\partial y}\right) - \left(c \frac{\partial f}{\partial x} + \frac{\partial f}{\partial y}\right)\frac{\partial g}{\partial x}\right](x + cy, y)
\ =\ F_{f,g}(x + cy, y).
$$
This, together with Lemma~\ref{Lemma31}, implies the required conclusion. 
\end{proof}

\begin{theorem}\label{Theorem48}
Assume that $g$ has an isolated zero at the origin, $F_{f, g} \not \equiv 0$ and the polynomials $f, g$ and $F_{f, g}$ are $y$-regular. 
Then the following conditions are equivalent:
\begin{enumerate} [{\rm (i)}]
\item $\displaystyle\lim_{(x, y) \to (0, 0)}\frac{f(x, y)}{g(x, y)} = 0.$ 

\item For $*\in\{\pm\}$ and for any $\gamma\in \mathbf{P}^{*}\left(F_{f,g}\right)\setminus \mathbf{P}^{*}(f),$ we have 
\begin{eqnarray*}
\ord{f(* x, \gamma(x))} &>& \ord{g(* x, \gamma(x))}.
\end{eqnarray*}
\end{enumerate}
\end{theorem}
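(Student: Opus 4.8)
The plan is to follow the standard strategy of reducing the problem to the critical curve $F_{f,g}^{-1}(0)$: show that along an arbitrary sequence approaching the origin, $f/g$ is squeezed between its values at the extrema of $f$ on the level curves of $g$; observe (Lagrange multipliers) that those extrema lie on $F_{f,g}^{-1}(0)$; and then read everything off the Puiseux expansions of its half-branches. As a preliminary normalization, replacing $g$ by $-g$ multiplies the limit by $-1$ and, since $F_{f,-g}=-F_{f,g}$, leaves $\mathbf{P}^{*}(F_{f,g})$ and hence condition (ii) unchanged; so I may assume $g\geqslant 0$ near the origin. Fix $r,\delta>0$ as in Lemma~\ref{Lemma43}, shrinking $r$ (and re-choosing $\delta$ accordingly) so that in addition $g^{-1}(0)\cap\mathbb{B}_r=\{(0,0)\}$ and the half-branch description of Section~\ref{Section3} applies to $F_{f,g}^{-1}(0)$ inside $\mathbb{B}_r$; here the hypothesis that $F_{f,g}$ is $y$-regular is essential, as it guarantees that these half-branches are exactly the ones parametrized by $\mathbf{P}^{\pm}(F_{f,g})$.

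For the implication (ii) $\Rightarrow$ (i), take any sequence $(x_k,y_k)\to(0,0)$ with $g(x_k,y_k)\neq 0$; for large $k$ the value $t_k:=g(x_k,y_k)$ lies in $(0,\delta)$, so by Lemma~\ref{Lemma43} the point $(x_k,y_k)$ lies on the non-singular compact connected curve $C_k:=g^{-1}(t_k)\cap\mathbb{B}_r\subset\Int\mathbb{B}_r$ encircling the origin. Let $p_k^{+}$ and $p_k^{-}$ be points of $C_k$ where $f|_{C_k}$ attains its maximum and minimum. Since $t_k$ is a regular value of $g$ and $p_k^{\pm}$ is interior, the Lagrange multiplier rule makes $\nabla f(p_k^{\pm})$ and $\nabla g(p_k^{\pm})$ linearly dependent, that is, $F_{f,g}(p_k^{\pm})=0$. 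From $f(p_k^{-})\leqslant f(x_k,y_k)\leqslant f(p_k^{+})$ and $t_k>0$,
$$\frac{f(p_k^{-})}{t_k} \ \leqslant \ \frac{f(x_k,y_k)}{g(x_k,y_k)} \ \leqslant \ \frac{f(p_k^{+})}{t_k},$$
so it suffices to show $f(p_k^{\pm})/t_k\to 0$. Since $p_k^{\pm}\in\mathbb{B}_r$ and $g(p_k^{\pm})=t_k\to 0$ while $g^{-1}(0)\cap\mathbb{B}_r=\{(0,0)\}$, every accumulation point of $(p_k^{\pm})_k$ is the origin, so $p_k^{\pm}\to(0,0)$. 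After passing to a subsequence, each family $(p_k^{\pm})_k$ lies on a single half-branch of $F_{f,g}^{-1}(0)$, of the form $x=*t$, $y=\gamma(t)$ ($t\to 0^{+}$) with $*\in\{\pm\}$ and $\gamma\in\mathbf{P}^{*}(F_{f,g})$; write $p_k^{+}=(*s_k,\gamma(s_k))$. If $\gamma\in\mathbf{P}^{*}(f)$ then $f$ vanishes identically along the half-branch, so $f(p_k^{+})=0$. Otherwise $\gamma\in\mathbf{P}^{*}(F_{f,g})\setminus\mathbf{P}^{*}(f)$, and moreover $\gamma\notin\mathbf{P}^{*}(g)$ --- for otherwise $g$ would vanish along a half-branch, producing zeros of $g$ near but distinct from the origin, contradicting the isolated-zero hypothesis --- so $f(*x,\gamma(x))=ax^{\alpha}+\cdots$ and $g(*x,\gamma(x))=bx^{\beta}+\cdots$ with $a,b\neq 0$, $\alpha=\ord f(*x,\gamma(x))>\ord g(*x,\gamma(x))=\beta$ by (ii), whence $f(p_k^{+})/t_k=(a/b)s_k^{\alpha-\beta}(1+o(1))\to 0$. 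The same applies to $p_k^{-}$, and a routine subsequence argument then shows $\lim_{(x,y)\to(0,0)}f/g=0$.

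For the implication (i) $\Rightarrow$ (ii), fix $*\in\{\pm\}$ and $\gamma\in\mathbf{P}^{*}(F_{f,g})\setminus\mathbf{P}^{*}(f)$ and substitute the half-branch $t\mapsto(*t,\gamma(t))$, $t\to 0^{+}$, into (i). Because $\gamma\notin\mathbf{P}^{*}(f)$ we have $f(*x,\gamma(x))\not\equiv 0$, and because the origin is an isolated zero of $g$ we have $g(*t,\gamma(t))\neq 0$ for all small $t>0$; hence by (i), $f(*t,\gamma(t))/g(*t,\gamma(t))=(a/b)t^{\alpha-\beta}(1+o(1))\to 0$ with $a,b\neq 0$, $\alpha=\ord f(*x,\gamma(x))$, $\beta=\ord g(*x,\gamma(x))$, which forces $\alpha>\beta$; this is exactly (ii).

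I expect the main obstacle to be the first half of (ii) $\Rightarrow$ (i): carefully passing from an arbitrary approaching sequence to the extrema of $f$ on a level curve of $g$ via Lemma~\ref{Lemma43}, then onto a single Puiseux-parametrized half-branch of $F_{f,g}^{-1}(0)$ via Lagrange multipliers and the $y$-regularity of $F_{f,g}$, and correctly disposing of the degenerate cases $\gamma\in\mathbf{P}^{*}(f)$ and (the excluded) $\gamma\in\mathbf{P}^{*}(g)$. The reverse implication and all the order computations are routine.
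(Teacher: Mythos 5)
Your proof is correct and follows essentially the same route as the paper's: squeeze $f/g$ between its extrema on the level curves $g^{-1}(t)\cap\mathbb{B}_r$ supplied by Lemma~\ref{Lemma43}, locate those extrema on $F_{f,g}^{-1}(0)$ by the Lagrange multiplier rule, and convert condition (ii) into decay along the Puiseux-parametrized half-branches (with the degenerate cases $\gamma\in\mathbf{P}^{*}(f)$ and $\gamma\in\mathbf{P}^{*}(g)$ handled exactly as needed). The only difference is technical: where you pass to subsequences of extremizers landing on one of the finitely many half-branches, the paper extracts a single semi-algebraic curve of minimizers via the semi-algebraic choice lemma and identifies its germ as a half-branch using the monotonicity lemma and the $y$-regularity of $F_{f,g}$; both arguments work.
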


\begin{proof}
(i) $\Rightarrow$ (ii): This is clear.

(ii) $\Rightarrow$ (i): 
Replacing $g$ by $-g$ if necessary, we may assume that $g \geqslant 0$ in some neighborhood of the origin $(0, 0) \in \mathbb{R}^2.$
By Lemma~\ref{Lemma43}, there exist $r > 0$ and $\delta> 0$ such that each $t$ in $(0, \delta)$ is a regular value of $g$ and satisfies
\begin{eqnarray*}
g^{-1}(t) \cap \mathbb{B}_r &=& g^{-1}(t) \cap \mathrm{int} (\mathbb{B}_r) \ \ne \ \emptyset.
\end{eqnarray*}
Thus, for such $t,$ the following equalities hold:
\begin{eqnarray*}
\min_{(x, y) \in g^{-1}(t) \cap \mathbb{B}_r} f(x, y) & = & \min_{(x, y) \in g^{-1}(t) \cap \mathrm{int}(  \mathbb{B}_r)} f(x, y), \\
\max_{(x, y) \in g^{-1}(t) \cap \mathbb{B}_r} f(x, y) & = & \max_{(x, y) \in g^{-1}(t) \cap \mathrm{int}(  \mathbb{B}_r)} f(x, y).
\end{eqnarray*}
Applying Lemma~\ref{LemmaSAChoice} for the (nonempty) semi-algebraic set
\begin{eqnarray*}
S & := &  \Big  \{(t, (x, y)) \in (0, \delta)  \times \mathbb{B}_r \ \Big | \  (x, y) \in \mathrm{argmin}_{(x, y) \in g^{-1}(t) \cap \mathbb{B}_r} f(x, y) \Big \}
\end{eqnarray*}
and the projection $S \to (0, \delta), ({t}, (x, y)) \mapsto {t},$ we get a semi-algebraic curve 
$$\gamma \colon (0, \delta) \to \mathbb{R}^2, \quad t \mapsto (x(t), y(t)),$$ 
such that for all $t \in (0, \delta)$ we have that $\gamma(t) \in g^{-1}(t) \cap \mathrm{int}  (\mathbb{B}_r)$ is an optimal solution of the problem $\min_{(x, y) \in g^{-1}(t) \cap \mathbb{B}_r} f(x, y).$ 
By the Lagrange multiplier theorem, the gradient vectors $\nabla f(\gamma(t))$ and $\nabla g(\gamma(t))$ are linearly dependent, i.e., $F_{f, g}(\gamma(t)) \equiv 0.$ Since $F_{f, g}$ is $y$-regular, $x(t) \ne 0$ for small $t > 0.$ 
By Lemma~\ref{MonotonicityLemma} and by shrinking $\delta$ (if necessary), we may assume that for all $t \in (0, \delta),$ either $x(t) > 0$ or $x(t) < 0.$ Then the germ of the curve $t \mapsto \gamma(t)$ is either a right half-branch or a left half-branch of $F_{f, g}^{-1}(0).$ This, together with the condition~(ii), yields 
\begin{eqnarray*}
\min_{(x, y) \in g^{-1}(t) \cap \mathbb{B}_r} \frac{f(x, y)}{g(x, y)}  &=& \frac{f(\gamma(t))}{g(\gamma(t))} \ \to \ 0
\quad \textrm{ as } \quad t \to 0^+.
\end{eqnarray*}
Similarly, we also have 
\begin{eqnarray*}
\max_{(x, y) \in g^{-1}(t) \cap \mathbb{B}_r} \frac{f(x, y)}{g(x, y)}  &\to & 0 \quad \textrm{ as } \quad t \to 0^+.
\end{eqnarray*}

Let $(x_k, y_k) \in \mathbb{R}^2 \setminus g^{-1}(0)$ be any sequence tending to $(0, 0).$ 
Then for all sufficiently large $k,$ we have $(x_k, y_k) \in \mathbb{B}_r$ and $t_k := g(x_k, y_k) \in (0, \delta).$ For such $k$, the following inequalities hold:
\begin{eqnarray*}
\min_{(x, y) \in g^{-1}(t_k) \cap \mathbb{B}_r} \frac{f(x, y)}{g(x, y)}  &\leqslant& 
\frac{f (x_k, y_k) }{g (x_k, y_k) } \ \leqslant \ \max_{(x, y) \in g^{-1}(t_k) \cap \mathbb{B}_r} \frac{f(x, y)}{g(x, y)}.
\end{eqnarray*}
Therefore, $\displaystyle\lim_{k \to \infty} \frac{f(x_k, y_k)}{g(x_k, y_k)} = 0,$ which completes the proof.
\end{proof}

A truncated version of Theorem~\ref{Theorem48}, which will be used in the design of our algorithms, reads as follows.

\begin{corollary}\label{Corollary49}
Under the assumptions of Theorem~\ref{Theorem48}, let $N$ be an integer such that
\begin{eqnarray*}
N &>& \max\{\mathcal N(\deg  f+\deg F_{f,g}), \mathcal N(\deg  g)\}.
\end{eqnarray*}
Then the following conditions are equivalent:
\begin{enumerate} [{\rm (i)}]
\item $\displaystyle\lim_{(x, y) \to (0, 0)}\frac{f(x, y)}{g(x, y)} = 0.$ 

\item For $*\in\{\pm\}$ and for any $\gamma_N\in \mathbf{P}_N^{*}\left(F_{f,g}\right)\setminus \mathbf{P}_N^{*}(f),$ we have 
\begin{eqnarray*}
\ord f(* x, \gamma_N(x)) &>& \ord g(* x, \gamma_N(x)).
\end{eqnarray*}
\end{enumerate}
\end{corollary}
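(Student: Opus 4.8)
The plan is to reduce Corollary~\ref{Corollary49} to Theorem~\ref{Theorem48} by showing that the truncated condition (ii) here is equivalent to the analytic condition (ii) of Theorem~\ref{Theorem48}. The key input is the dictionary between Newton--Puiseux roots and their truncations developed in Section~\ref{Section3}, applied with $N$ large enough.

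\medskip

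First, I would unpack the choice of $N.$ Since $N > \mathcal N(\deg f + \deg F_{f,g}),$ Theorem~\ref{Theorem37} applied to the pair $(F_{f,g}, f)$ gives, for each $*\in\{\pm\},$ a bijection
$$\mathbf{P}^{*}\!\left(F_{f,g}\right)\setminus \mathbf{P}^{*}(f) \ \longrightarrow \ \mathbf{P}_N^{*}\!\left(F_{f,g}\right)\setminus \mathbf{P}_N^{*}(f), \qquad \gamma \mapsto \gamma \bmod x^N.$$
(Here I also use that $f$ and $F_{f,g}$ are $y$-regular, which is part of the standing hypotheses of Theorem~\ref{Theorem48}.) So the half-branches tested in the two conditions are in natural correspondence; it remains to check that the inequality between orders is preserved under truncation, for a fixed $\gamma$ and its truncation $\gamma_N := \gamma \bmod x^N.$

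\medskip

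For the numerator: since $\gamma_N \notin \mathbf{P}_N(f),$ in particular $\gamma$ is not a Newton--Puiseux root of $f,$ and Theorem~\ref{Theorem36}(iv) (applied to $f,$ legitimate because $N > \mathcal N(\deg f)$) gives $\ord f(x, \gamma(x)) = \ord f(x, \gamma_N(x)),$ and likewise after the substitution $x \mapsto -x$ to handle the $*=-$ case (note $f(-x,y)$ is still $y$-regular of the same degree). For the denominator: here I need a small extra observation, namely that $\gamma_N \notin \mathbf{P}_N(g)$ as well, so that Theorem~\ref{Theorem36}(iv) applies to $g$ (legitimate because $N > \mathcal N(\deg g)$) and yields $\ord g(x,\gamma(x)) = \ord g(x,\gamma_N(x)).$ To see $\gamma_N \notin \mathbf P_N(g)$: if $\gamma$ were a Newton--Puiseux root of $g,$ then since $g$ has an isolated zero at the origin, $g^{-1}(0)$ is just the origin near $(0,0),$ so $g$ has no real half-branches and $\gamma$ would be non-real; but $\gamma \in \mathbf P^{*}(F_{f,g})$ is real by definition, a contradiction. (Alternatively, $g(0,0)=0$ together with $g$ having an isolated zero at the origin forces $g$ to have no nonconstant factor vanishing along a real curve through the origin, and one argues $\ord g(x,\gamma(x))$ is finite directly.) Combining, $\ord f(*x,\gamma(x)) > \ord g(*x,\gamma(x))$ if and only if $\ord f(*x,\gamma_N(x)) > \ord g(*x,\gamma_N(x)),$ and running this equivalence over the bijection above shows (ii) of this corollary is equivalent to (ii) of Theorem~\ref{Theorem48}; Theorem~\ref{Theorem48} then finishes the proof.

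\medskip

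The main obstacle I anticipate is the denominator bookkeeping: one must be careful that $\ord g(x,\gamma_N(x))$ is actually finite and equals $\ord g(x,\gamma(x)),$ which is exactly where the hypothesis $N > \mathcal N(\deg g)$ and the isolatedness of the zero of $g$ enter; the numerator side is immediate from Theorem~\ref{Theorem36}(iv). Everything else is a routine transcription through the bijections of Theorems~\ref{Theorem36} and~\ref{Theorem37}, together with the symmetry $x \mapsto -x$ that reduces the left half-branch case ($*=-$) to the right one.
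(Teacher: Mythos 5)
Your proof is correct and follows essentially the same route as the paper: reduce to Theorem~\ref{Theorem48} via the bijection of Theorem~\ref{Theorem37} between $\mathbf{P}^{*}(F_{f,g})\setminus\mathbf{P}^{*}(f)$ and its truncation, then use Theorem~\ref{Theorem36}(iv) to see that the orders of $f$ and $g$ along a root and along its truncation agree, the denominator case resting on the fact that $\mathbf{P}^{\pm}(g)=\emptyset$ because $g$ has an isolated real zero. The only point worth tightening (in both your write-up and the paper's) is that passing from ``$\gamma\notin\mathbf{P}(g)$'' to ``$\gamma_N\notin\mathbf{P}_N(g)$'' uses Theorem~\ref{Theorem36}(ii): a complex root of $g$ whose truncation equals the real series $\gamma_N$ would itself have to be real.
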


\begin{proof}
By Theorem~\ref{Theorem37} and the choice of $N,$ the mapping
$$\mathbf{P}^{*}(F_{f,g}) \setminus \mathbf{P}^{*}(f) \to \mathbf{P}_N^{*}(F_{f,g})\setminus \mathbf{P}_N^{*}(f), \quad \gamma \mapsto \gamma \mod x^N,$$ 
is bijective. Therefore, in view of Theorem~\ref{Theorem48}, it suffices to show that for any $\gamma \in \mathbf{P}^{*}\left(F_{f, g}\right)\setminus \mathbf{P}^{*}(f),$ the following equalities hold:
\begin{eqnarray*}
\ord{f(* x, \gamma(x))} &=& \ord{f(* x, \gamma(x)\mod x^N)}, \\
\ord{g(* x, \gamma(x))} &=& \ord{g(* x, \gamma(x)\mod x^N)}.
\end{eqnarray*}

In fact, the first equality follows directly from Theorem~\ref{Theorem36}(iv).
The second inequality follows similarly by remarking that $\mathbf{P}^{*}(g) = \emptyset,$ and so $\gamma \not \in \mathbf{P}^{* }(g)$ for any $\gamma \in \mathbf{P}^{*}\left(F_{f, g}\right).$
\end{proof}

\begin{remark}{\rm
\begin{enumerate}[{\rm (i)}]
\item As we have seen, in the case where $g$ has an isolated zero at the origin, the limit $\displaystyle\lim_{(x, y) \to (0, 0)}\frac{f(x, y)}{g(x, y)}$ exists if and only if it exists along the half-branches of the curve $F_{f, g} = 0.$ 
Similar approach has appeared in \cite{Cadavid2013} for this case. 
However, instead of $F_{f, g},$ whose degree is bounded by $\deg f+\deg g-2$, the polynomial $G_{f,g}$ given by~\eqref{Gfg}, whose degree is bounded by $\deg f+\deg g,$ is used there. So in general, it has greater degree than that of $F_{f, g}.$
\item 
Let $N$ be an integer such that $N > \mathcal N(\deg  g)$.
If $\mathbf{P}^{+}_N({g})\cup\mathbf{P}^{-}_N({g})=\emptyset,$ then it not hard to see that $g$ has a local extremum at the origin $(0,0)$. 
Furthermore, let $c$ be the coefficient of the term of lowest degree in $g(0,y)$. 
Clearly $(0,0)$ is a local maximum of $g$ if $c>0$; otherwise it is a local minimum of $g$.
Assume that $f(0,0)\ne 0.$ 
Then 
$$\displaystyle\lim_{(x, y) \to (0, 0)}\frac{f(x, y)}{g(x, y)}=\left\{
\begin{array}{lll}
+\infty & \text{if} & c f(0,0)>0\\
-\infty & \text{if} & c f(0,0)<0.
\end{array}\right.$$
\end{enumerate}

}\end{remark}

\subsection{The general case}

We now consider the general case: the polynomial $g$ has {\em not necessarily} an isolated zero at the origin. To this end, 
as in the introduction, define the polynomial $G_{f, g} \in \mathbb{R}[x, y]$ by
\begin{eqnarray}\label{Gfg}
G_{f, g} 
&:=& y\left(g\frac{\partial f}{\partial x}- f\frac{\partial g}{\partial x}\right) - x\left(g\frac{\partial f}{\partial y} - f\frac{\partial g}{\partial y}\right).
\end{eqnarray}
Observe that if $(x, y) \not \in g^{-1}(0),$ then $G_{f,  g}(x, y) = 0$ if and only if the vectors $\nabla \big( \frac{f}{g}\big)(x, y)$  and $(x, y)$ are linearly dependent. Also note that $G_{f,  g}$ vanishes at the origin.

Let us start with the case $G_{f, g}  \equiv 0.$

\begin{theorem}
Assume that $G_{f, g} \equiv 0.$ Then
$$\displaystyle\lim_{(x, y) \to (0, 0)}\frac{f(x, y)}{g(x, y)} = 0 \quad \textrm{ if and only if } \quad \mathrm{ord} f > \mathrm{ord} g.$$
\end{theorem}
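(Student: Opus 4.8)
The plan is to exploit the meaning of the condition $G_{f,g}\equiv 0$: away from $g^{-1}(0)$, it says that the gradient of $f/g$ is everywhere parallel to the radial vector $(x,y)$. Geometrically, this forces $f/g$ to be constant on (connected pieces of) circles centered at the origin, in the region where $g$ does not vanish. So the first step is to make this precise: working in polar coordinates $x=\rho\cos\theta$, $y=\rho\sin\theta$, the identity $G_{f,g}=0$ becomes $\frac{\partial}{\partial\theta}\bigl(\tfrac{f}{g}\bigr)=0$ on any arc of a circle $\{\rho=\text{const}\}$ avoiding $g^{-1}(0)$. As in the proof of Lemma~\ref{Lemma45}, one breaks such an arc into smooth pieces and integrates; the main point is that the function $h(\rho):=(f/g)(\rho\cos\theta,\rho\sin\theta)$ is, for each fixed $\rho$ small enough, independent of $\theta$ as $\theta$ ranges over values for which $g(\rho\cos\theta,\rho\sin\theta)\neq 0$.

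Next I would reduce to computing the limit along a single convenient curve, namely along a radial direction $(\bar x,\bar y)$ with $g_n(\bar x,\bar y)\neq 0$ and $f_m(\bar x,\bar y)\neq 0$; along $t\mapsto(t\bar x,t\bar y)$ we get exactly the computation in the proof of Lemma~\ref{Lemma41}, so the limit along that ray is $0$ if $\ord f>\ord g$ and is nonzero (or $\infty$) otherwise. This already proves the \emph{only if} direction via Lemma~\ref{Lemma41}. For the \emph{if} direction, suppose $\ord f>\ord g$ and take an arbitrary sequence $(x_k,y_k)\to(0,0)$ with $g(x_k,y_k)\neq 0$. Write $(x_k,y_k)=\rho_k(\cos\theta_k,\sin\theta_k)$ with $\rho_k\to 0^+$. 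The idea is to connect, on the circle of radius $\rho_k$, the point $(x_k,y_k)$ to a point in a fixed "good" angular direction where the asymptotics are controlled, using the $\theta$-invariance of $f/g$ on the complement of $g^{-1}(0)$. The obstacle is that the circle $\{\rho=\rho_k\}$ may meet $g^{-1}(0)$, so the angular arc through $(x_k,y_k)$ on which $f/g$ is constant need not reach the good direction.

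To handle this, I would argue that for each small $\rho$ the circle $\mathbb{S}_\rho$ meets $g^{-1}(0)$ in only finitely many points (since $g\not\equiv 0$), hence splits into finitely many open arcs on each of which $f/g$ is constant; moreover the number of such arcs is bounded uniformly in $\rho$ by a constant depending only on $\deg g$. Using the monotonicity lemma (Lemma~\ref{MonotonicityLemma}) applied to the semi-algebraic function recording, for each $\rho$, the value of $f/g$ on the arc containing $(x_k,y_k)$—or, more cleanly, applying semi-algebraic choice (Lemma~\ref{LemmaSAChoice}) to pick from each arc a representative point depending semi-algebraically on $\rho$—one obtains finitely many semi-algebraic curves $\rho\mapsto \beta_i(\rho)$ such that every $(x_k,y_k)$ lies on the same circle as some $\beta_i(\rho_k)$ with $f/g$ taking the same value at both. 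Since each $\beta_i$ is a semi-algebraic curve tending to the origin, it can be parametrized by Puiseux expansions, and along it $g\neq 0$; combining $\ord f>\ord g$ with the leading-term analysis (exactly as in Lemma~\ref{Lemma41}, applied to the curve $\beta_i$ rather than a ray, and using that $g$ does not vanish identically on $\beta_i$) gives $(f/g)(\beta_i(\rho))\to 0$. Hence $(f/g)(x_k,y_k)\to 0$. I expect the uniform-boundedness / semi-algebraic-selection step — producing the finitely many curves $\beta_i$ from the $\theta$-invariance and controlling their behavior near the origin — to be the technical heart; the rest is the by-now-routine leading-homogeneous-part computation together with the integration argument already used for Lemma~\ref{Lemma45}.
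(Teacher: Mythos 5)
Your ``only if'' direction and the reduction to a radial computation are fine, but the ``if'' direction has a genuine gap at its final step. After selecting, for each small $\rho$, a representative point $\beta_i(\rho)$ on each arc of $\mathbb{S}_\rho\setminus g^{-1}(0)$, you claim that $(f/g)(\beta_i(\rho))\to 0$ follows from $\ord f>\ord g$ by ``the leading-term analysis exactly as in Lemma~\ref{Lemma41}, applied to the curve $\beta_i$ rather than a ray.'' That inference is not valid: the computation in Lemma~\ref{Lemma41} is specific to rays $t\mapsto(t\bar x,t\bar y)$ in directions where $f_m$ and $g_n$ do not vanish. Along a general semi-algebraic curve tending to the origin, the orders of $f$ and $g$ are governed by the contact of the curve with the Newton--Puiseux roots of $f$ and $g$, not by $\ord f$ and $\ord g$ alone; for instance $f=x^3$ and $g=x^2-y^3$ satisfy $\ord f>\ord g$, yet $f/g\to\infty$ along the curve $x^2=y^3+y^{100}$, on which $g\ne 0$. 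Since your curves $\beta_i$ come from an abstract semi-algebraic selection, nothing prevents one of them from hugging $g^{-1}(0)$ in exactly this way, and the hypothesis $G_{f,g}\equiv 0$ is not invoked again at this stage to rule that out. So only the arc containing the fixed good radial direction is actually controlled; the values on the other arcs are not.

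The missing idea, which is the heart of the paper's proof, is that under $G_{f,g}\equiv 0$ the constant value of $f/g$ propagates \emph{across} the points of $\mathbb{S}_t\cap g^{-1}(0)$: parametrizing the circle analytically through such a point by $\alpha\colon(-1,1)\to\mathbb{S}_t$, the value of $f/g$ equals some constant $c$ for $s>0$ small, hence the analytic function $s\mapsto f(\alpha(s))-c\,g(\alpha(s))$ vanishes for $s>0$ small and therefore identically near $s=0$, so $f/g$ equals the same $c$ for $s<0$ small as well. Iterating over the finitely many zeros of $g$ on the circle shows that $f/g$ is a single constant on all of $\mathbb{S}_t\setminus g^{-1}(0)$, equal to its value at $(at,bt)$ for a fixed direction with $f_m(a,b)\ne 0$ and $g_n(a,b)\ne 0$, and only there is the leading-term computation needed. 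If you add this crossing argument, your selection machinery becomes unnecessary and the proof reduces to the paper's.
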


\begin{proof}
By Lemma~\ref{Lemma41}, it suffices to prove the sufficient condition.
So, let $\mathrm{ord} f > \mathrm{ord} g.$ 

Fix $t > 0.$ Since $G_{f, g} \equiv 0,$ it is not hard to check that the restriction of $\frac{f}{g}$ on each connected component of $\mathbb{S}_t \setminus g^{-1}(0)$ is constant; confer Lemma~\ref{Lemma45}. Take any $(\bar{x}, \bar{y}) \in \mathbb{S}_t \cap g^{-1}(0).$ Clearly, there exists an injective analytic curve $\alpha \colon (-1, 1) \to \mathbb{R}^2$ lying in $\mathbb{S}_t$ such that $\alpha(0) = (\bar{x}, \bar{y}).$ We have proved that the function $s \mapsto \frac{f (\alpha(s))}{g (\alpha(s))}$ is constant, say $c,$ for $s > 0$ small enough. Since the function $s \mapsto {f (\alpha(s))} - c\, {g (\alpha(s))}$ is analytic, it follows that $s \mapsto \frac{f (\alpha(s))}{g (\alpha(s))}$ also is constant $c$ for $s < 0$ small. Therefore, the restriction of $\frac{f}{g}$ on $\mathbb{S}_t \setminus g^{-1}(0)$ is constant.

Recall that $m := \ord f$ and $n := \ord g.$
Let $(a,b)\in\mathbb R^2\setminus\{(0,0)\}$ such that $a^2+b^2=1$, $f_m(a,b)\ne 0$ and $g_n(a,b)\ne 0.$
We can write
\begin{eqnarray*}
f(at, bt) &=& f_m(a,b)\, t^m + \textrm{ higher order terms in } t, \\
g(at, bt) &=& g_n(a,b)\, t^n + \textrm{ higher order terms in } t,
\end{eqnarray*}
By assumption, $m > n.$ Therefore, as $t := \sqrt{x^2 + y^2} \to 0$ with $(x, y) \not \in g^{-1}(0),$ we have
$$\frac{f(x, y)}{g(x, y)} \ = \ \frac{f(at, bt)}{g(at, bt)} \ \to \ 0,$$
which completes the proof.
\end{proof}

We next consider the case $G_{f, g} \not \equiv 0.$ To this end, we need the following fact.

\begin{lemma}
Let $f$ and $g$ be $y$-regular of order $m$ and $n$, respectively. Denote by $f_m$ and $g_n$ the homogeneous components of $f$ and $g$ of the degrees $m$ and $n,$ respectively. Suppose that $m \neq n,$ then the following statements hold.
\begin{enumerate}[\upshape (i)]
\item If $\left(g_n\frac{\partial f_m}{\partial x} - f_m \frac{g_n}{\partial x}\right)(0,1)\neq 0,$ then $G_{f, g}$ is $y$-regular.
\item If $\left(g_n\frac{\partial f_m}{\partial x} - f_m\frac{g_n}{\partial x}\right)(0,1) =  0,$ then the polynomials 
$\widetilde{f}(x,y) := f(x, x+y),$ $\widetilde{g}(x,y) := g(x, x+y),$ as well as, $G_{\widetilde{f}, \widetilde{g}}$ are $y$-regular.
\end{enumerate}
\end{lemma}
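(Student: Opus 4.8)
The plan is to analyze $G_{f,g}$ through its lowest-degree homogeneous component, reducing the $y$-regularity question to a non-vanishing condition at $(0,1)$, exactly as Lemma~\ref{Lemma31} does for arbitrary power series. First I would compute the lowest-order part of $G_{f,g}$ when $m\neq n$. Writing $f = f_m + (\text{higher order})$ and $g = g_n + (\text{higher order})$, the products $g\frac{\partial f}{\partial x} - f\frac{\partial g}{\partial x}$ and $g\frac{\partial f}{\partial y} - f\frac{\partial g}{\partial y}$ each have lowest-order part $g_n\frac{\partial f_m}{\partial x} - f_m\frac{\partial g_n}{\partial x}$ and $g_n\frac{\partial f_m}{\partial y} - f_m\frac{\partial g_n}{\partial y}$, each homogeneous of degree $m+n-1$. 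Hence the bracketed expressions in \eqref{Gfg}, before multiplying by $y$ and $x$, have order $m+n-1$; multiplying by $y$ and $x$ and subtracting, the candidate lowest-order form of $G_{f,g}$ is the degree-$(m+n)$ homogeneous polynomial
\begin{eqnarray*}
H(x,y) &:=& y\left(g_n\frac{\partial f_m}{\partial x} - f_m\frac{\partial g_n}{\partial x}\right) - x\left(g_n\frac{\partial f_m}{\partial y} - f_m\frac{\partial g_n}{\partial y}\right).
\end{eqnarray*}
The point where $m\neq n$ enters is that this $H$ need not vanish identically: by Euler's relation, $x\frac{\partial f_m}{\partial x} + y\frac{\partial f_m}{\partial y} = m f_m$ and similarly for $g_n$, so $H = (n-m)f_m g_n + (\text{a multiple of } G_{f_m,g_n})$; a short computation shows in fact $H \equiv (n - m) f_m g_n$ up to sign issues I would track carefully, and since $m\neq n$ and $f_m, g_n\not\equiv 0$, we get $H\not\equiv 0$. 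Thus $\ord G_{f,g} = m+n$ and its lowest-order form is $H$.

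For part (i), I would then invoke Lemma~\ref{Lemma31} (in the form: a power series with lowest-order homogeneous part $H$ is $y$-regular iff $H(0,1)\neq 0$): since $H(0,1)$ is, up to a nonzero scalar, equal to $\left(g_n\frac{\partial f_m}{\partial x} - f_m\frac{\partial g_n}{\partial x}\right)(0,1)$ times a power of the second coordinate evaluated at $1$ — more precisely one checks directly from the displayed formula for $H$ that $H(0,1) = \left(g_n\frac{\partial f_m}{\partial x} - f_m\frac{\partial g_n}{\partial x}\right)(0,1)$ because the $x$-factor kills the second term — the hypothesis gives $H(0,1)\neq 0$ and hence $G_{f,g}$ is $y$-regular. (One also needs $f$ and $g$ themselves $y$-regular, which is given, so $\ord f = m$, $\ord g = n$ are the orders used above.)

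For part (ii), when $\left(g_n\frac{\partial f_m}{\partial x} - f_m\frac{\partial g_n}{\partial x}\right)(0,1) = 0$, the substitution $(x,y)\mapsto(x, x+y)$ is designed to move the offending direction. I would first note that $\widetilde f(x,y) = f(x,x+y)$ has lowest-order form $f_m(x, x+y)$, and $f_m(0,1) = f_m(0,1)\neq 0$? — no: one must check $f_m(x,x+y)$ evaluated at $(0,1)$ equals $f_m(0,1)\neq 0$, so $\widetilde f$ stays $y$-regular of order $m$, and likewise $\widetilde g$ of order $n$. Then I would apply part (i) to $\widetilde f, \widetilde g$: it suffices that $\left(\widetilde g_n\frac{\partial \widetilde f_m}{\partial x} - \widetilde f_m\frac{\partial \widetilde g_n}{\partial x}\right)(0,1)\neq 0$, where $\widetilde f_m(x,y) = f_m(x,x+y)$, etc. Expanding the chain rule, $\frac{\partial \widetilde f_m}{\partial x}(x,y) = \frac{\partial f_m}{\partial x}(x,x+y) + \frac{\partial f_m}{\partial y}(x,x+y)$, so the relevant quantity at $(0,1)$ becomes a combination of $\left(g_n\frac{\partial f_m}{\partial x} - f_m\frac{\partial g_n}{\partial x}\right)(0,1)$ (which is zero) and $\left(g_n\frac{\partial f_m}{\partial y} - f_m\frac{\partial g_n}{\partial y}\right)(0,1)$. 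The main obstacle — and the step I would be most careful about — is showing this second term is nonzero: this is where the assumption $m\neq n$ is used again, since by Euler's relation evaluated at $(0,1)$ we have $\frac{\partial f_m}{\partial y}(0,1) = m f_m(0,1)$ and $\frac{\partial g_n}{\partial y}(0,1) = n g_n(0,1)$, whence $\left(g_n\frac{\partial f_m}{\partial y} - f_m\frac{\partial g_n}{\partial y}\right)(0,1) = (m-n) f_m(0,1) g_n(0,1)\neq 0$. That settles $y$-regularity of $G_{\widetilde f,\widetilde g}$ via part (i), and also re-confirms $\widetilde f, \widetilde g$ are $y$-regular, completing the proof. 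Throughout, the only genuinely delicate bookkeeping is keeping the Euler-relation identities and the signs straight; everything else is the direct substitution and the invocation of Lemma~\ref{Lemma31}.
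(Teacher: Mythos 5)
Your proof is correct and follows essentially the same route as the paper: in (i) the lowest-degree form of $G_{f,g}$ evaluated at $(0,1)$ reduces to the hypothesized quantity because the factor $x$ kills the second group of terms, and in (ii) the coefficient of $y^{m+n}$ after the shear $(x,y)\mapsto(x,x+y)$ becomes $\left(g_n\frac{\partial f_m}{\partial x}-f_m\frac{\partial g_n}{\partial x}\right)(0,1)+\left(g_n\frac{\partial f_m}{\partial y}-f_m\frac{\partial g_n}{\partial y}\right)(0,1)=(m-n)f_m(0,1)g_n(0,1)\neq 0$ via Euler's relation at $(0,1)$, exactly as in the paper (your detour through part (i) applied to $\widetilde f,\widetilde g$ is the same computation repackaged). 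One caveat: your side claim that $H\equiv(n-m)f_mg_n$ is false --- $H=g_nW(f_m)-f_mW(g_n)$ with $W=y\partial_x-x\partial_y$ the rotation field, not the Euler field, and e.g.\ $f_m=x^2+y^2$, $g_n=(x^2+y^2)^2$ gives $H\equiv 0$ with $m\neq n$ --- but this claim is never actually used, since in (i) the hypothesis already gives $H(0,1)\neq 0$ directly; you should simply delete that remark.
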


\begin{proof}
(i): This is trivial.

(ii): The polynomials $f$ and $g$ are $y$-regular, so are the polynomials $\widetilde{f}$ and $\widetilde{g}.$ Furthermore, we have 
\[
G_{\widetilde{f}, \widetilde{g}}(x,y)=y\left[\left(g\frac{\partial f}{\partial x}-f\frac{\partial g}{\partial x}\right)+\left(g\frac{\partial f}{\partial y}-f\frac{\partial g}{\partial y}\right)\right](x,x+y)+x\left(g\frac{\partial f}{\partial y}-f\frac{\partial g}{\partial y}\right)(x,x+y).
\]
The coefficient of $y^{m+n}$ in $G_{\widetilde{f}, \widetilde{g}}$ is 
\begin{eqnarray*}
\left(g_n\frac{\partial f_m}{\partial x} - f_m\frac{\partial g_n}{\partial x}\right)(0,1) + \left(g_n\frac{\partial f_m}{\partial y}-f_m\frac{\partial g_n}{\partial y}\right) (0, 1) 
&=& \left(g_n\frac{\partial f_m}{\partial y}-f_m\frac{\partial g_n}{\partial y}\right)(0,1)\\
&=& (m - n) f_m(0, 1) g_n(0, 1)\neq 0,
\end{eqnarray*}
which implies that $G_{\widetilde{f}, \widetilde{g}}$ is $y$-regular of order $m + n.$
\end{proof}

\begin{theorem}\label{Theorem415}
Assume that $f, g$ and $G_{f,g}$ are $y$-regular. Then $\displaystyle\lim_{(x, y) \to (0, 0)}\frac{f(x, y)}{g(x, y)} = 0$ if and only if the following conditions hold:
\begin{enumerate}[{\rm (i)}]
\item $\ord f > \ord g.$
\item For $*\in\{\pm\}$ and  for any $\gamma\in \mathbf{P}^{*}(G_{f,g}) \setminus \big(\mathbf{P}^{*}(f) \cup \mathbf{P}^{*}(g) \big),$ we have
\begin{eqnarray*}
\ord{f(* x, \gamma(x))} &>& \ord{g(* x, \gamma(x))}.
\end{eqnarray*}

\item For $*\in\{\pm\}$ and for any $\gamma \in \mathbf{P}^{*}(g),$ we have $\gamma \in \mathbf{P}^{*}(f)$ and either $\mult_f(\gamma) > \mult_g(\gamma)$ or $\mult_f(\gamma) = \mult_g(\gamma)$ and
$$\sum_{\varphi\in \mathbf{P}(f) \setminus\{\gamma\}}\mult_f(\varphi)\
\ord(\gamma  - \varphi)>\sum_{\psi\in \mathbf{P}(g)\setminus\{\gamma\}}\mult_g(\psi)\ \ord(\gamma  - \psi).$$
\end{enumerate}
\end{theorem}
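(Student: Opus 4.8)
The plan is to reduce the limiting problem to the study of $\frac{f}{g}$ along curves, following the same philosophy as in Theorem~\ref{Theorem48} but now without the isolatedness hypothesis on $g^{-1}(0)$. The necessity of the three conditions is the easy direction: condition (i) is Lemma~\ref{Lemma41}; condition (ii) is immediate because if the limit is $0$ then in particular $\frac{f}{g}$ tends to $0$ along every half-branch of $G_{f,g}^{-1}(0)$ on which $g$ does not vanish; and condition (iii) encodes the requirement that the limit be $0$ when one approaches the origin tangentially to a half-branch $\gamma$ of $g^{-1}(0)$ — here one needs $f$ to vanish at least as fast as $g$ along $\gamma$, which by Puiseux's theorem translates into the stated comparison of multiplicities, respectively of the sums $\sum \mult(\cdot)\,\ord(\gamma-\cdot)$, since $\ord f(x,\gamma(x)+cx^M)$ and $\ord g(x,\gamma(x)+cx^M)$ for generic small perturbations are governed exactly by these quantities. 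I would make this last point precise using the factorizations $f=u\prod(y-\varphi)^{\mult_f(\varphi)}$ and $g=v\prod(y-\psi)^{\mult_g(\psi)}$ and evaluating along a curve $y=\gamma(x)+c\,x^M$ with $M$ large and $c$ generic, exactly as in the proof of the Corollary after Theorem~\ref{Theorem39}.

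For the sufficiency direction I would argue by contradiction: suppose conditions (i)--(iii) hold but the limit is not $0$. Then there is a sequence $(x_k,y_k)\to(0,0)$ with $x_k^2+y_k^2=:t_k^2\to 0$ and $\left|\frac{f(x_k,y_k)}{g(x_k,y_k)}\right|\geqslant\varepsilon>0$ for all $k$. Consider, for each small $t>0$, the restriction of $\frac{f}{g}$ to the circle $\mathbb{S}_t$ minus the finitely many points of $\mathbb{S}_t\cap g^{-1}(0)$; this is a semi-algebraic function of the angular parameter, so by Lemma~\ref{MonotonicityLemma} it attains its extrema (on each arc) either at a boundary point, i.e.\ where $g=0$, or at an interior critical point, i.e.\ where $\nabla(\frac{f}{g})$ is parallel to $(x,y)$, which is precisely where $G_{f,g}=0$. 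Using semi-algebraic choice (Lemma~\ref{LemmaSAChoice}) I would select, for each $t$ in an interval $(0,\delta)$, a point $\gamma(t)\in\mathbb{S}_t$ realizing $\sup_{\mathbb{S}_t\setminus g^{-1}(0)}\frac{f}{g}$ (and similarly for the infimum), obtaining a semi-algebraic curve whose germ, after shrinking $\delta$ and applying the monotonicity lemma to fix the sign of the first coordinate, is either a half-branch of $G_{f,g}^{-1}(0)$ or a half-branch of $g^{-1}(0)$; the $y$-regularity of $f,g,G_{f,g}$ guarantees no branch is tangent to the $y$-axis, so the half-branches are parametrized by Puiseux series in $\mathbf{P}^{*}(\cdot)$. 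Along a $G_{f,g}$-half-branch on which $g\not\equiv 0$, condition (ii) forces $\frac{f}{g}\to 0$; along a half-branch of $g^{-1}(0)$ one must be careful since there $g$ vanishes identically, so I would instead approach such a branch from nearby (replacing $\gamma(t)$ by a point at angular distance $\sim t^{M}$ from it) and invoke condition (iii) together with the Puiseux-factorization estimate to conclude that $\frac{f}{g}\to 0$ there too. Either way both $\sup$ and $\inf$ of $\frac{f}{g}$ on $\mathbb{S}_t$ tend to $0$ as $t\to 0^+$, contradicting $\left|\frac{f(x_k,y_k)}{g(x_k,y_k)}\right|\geqslant\varepsilon$.

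The main obstacle, and the place where this proof genuinely differs from the isolated-zero case, is the analysis near the half-branches of $g^{-1}(0)$: there $\frac{f}{g}$ is not even defined on the curve itself, so one cannot simply evaluate an extremum-attaining curve there. The delicate point is to show that the extremal value on $\mathbb{S}_t$, if realized near such a branch, is still controlled — this is exactly what condition (iii) is designed for, and making the "approach from nearby" rigorous requires the generic-perturbation order computation $\ord f(x,\gamma(x)+c\,x^{M})=\mult_f(\gamma)\cdot M+\sum_{\varphi\neq\gamma}\mult_f(\varphi)\,\ord(\gamma-\varphi)$ for large $M$, and the analogous formula for $g$, so that the inequality in (iii) translates precisely into $\ord f(\cdot)>\ord g(\cdot)$ along the perturbed curve. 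A secondary subtlety is handling $\mathbf{P}^{*}(G_{f,g})\cap(\mathbf{P}^{*}(f)\cup\mathbf{P}^{*}(g))$: common branches of $G_{f,g}$ with $f$ or $g$ are excluded in (ii) because on a common branch with $g$ the function is again undefined (covered by (iii)), and on a common branch with $f$ contained in $g^{-1}(0)$ one reduces to (iii), while a common branch with $f$ not in $g^{-1}(0)$ gives $\frac{f}{g}\equiv 0$ trivially; I would dispatch these cases by a short separate remark before the main contradiction argument.
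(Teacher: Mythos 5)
Your proposal is correct and its skeleton is the same as the paper's: necessity via Lemma~\ref{Lemma41} plus the Puiseux factorization along branches of $g$ (your generic-perturbation order formula $\ord f(x,\gamma(x)+cx^M)=\mult_f(\gamma)M+\sum_{\varphi\neq\gamma}\mult_f(\varphi)\ord(\gamma-\varphi)$ is exactly the paper's comparison of $\ord f_1(x,\gamma(x))$ with $\ord g_1(x,\gamma(x))$), and sufficiency by extremizing over circles, using Lemma~\ref{LemmaSAChoice} and Lemma~\ref{MonotonicityLemma} to land on a half-branch of $G_{f,g}^{-1}(0)$ (via the Lagrange condition) or of $g^{-1}(0)$, then invoking (i)--(iii) branch by branch. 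The one substantive difference is precisely at the point you flag as the main obstacle: since $\sup_{\mathbb{S}_t\setminus g^{-1}(0)}\frac{f}{g}$ need not be attained, semi-algebraic choice cannot be applied to the argmax of $\frac{f}{g}$ itself. The paper sidesteps this by introducing the auxiliary function $h=\frac{f_1}{g_1}\prod_{\varphi\in\mathbf{P}^{+}(f)}(y-\varphi(x))^{\mult_f(\varphi)-\mult_g(\varphi)}$, which is continuous on $\mathbb{B}^{+}_\delta\setminus\{(0,0)\}$ because condition (iii) forces $\mult_f(\varphi)\geqslant\mult_g(\varphi)$ on every branch of $g$; its extrema on the compact half-circle $\mathbb{S}^{+}_t$ are attained and coincide with $\sup/\inf$ of $\frac{f}{g}$ on $\mathbb{S}^{+}_t\setminus g^{-1}(0)$, so a single application of semi-algebraic choice yields the three clean cases ($x(t)\equiv 0$, handled by (i) and $y$-regularity; a half-branch of $g^{-1}(0)$, handled by (iii); a half-branch of $G_{f,g}^{-1}(0)$, handled by (ii)). Your alternative---treating the supremum on each arc as either an attained interior critical value or a one-sided boundary limit at a zero of $g$ on $\mathbb{S}_t$, and controlling the latter by the order count in (iii)---in effect recomputes the values of this $h$ at the branch points, so it is completable; but to make it rigorous you must justify that the supremum over each arc is realized by one of finitely many candidate half-branches and that the boundary limit along $\mathbb{S}_t$ (not along a vertical line or your perturbed curve $y=\gamma(x)+cx^M$) is the quantity governed by (iii). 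The paper's $h$ buys compactness and removes all of that bookkeeping; your route is sound but heavier at exactly the delicate step.
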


\begin{proof} 
First of all, assume that $\displaystyle\lim_{(x, y) \to (0, 0)}\frac{f(x, y)}{g(x, y)} = 0,$ then (i) and (ii) follow immediately. Furthermore, given $\epsilon > 0$ we can find $\delta > 0$ such that
\begin{eqnarray} \label{EqnLimit}
|f(x, y)| &\leqslant & \epsilon |g(x, y)| \quad \textrm{ for all } \quad (x, y) \in \mathbb{B}_\delta.
\end{eqnarray}
This yields $\mathbf{P}^{*}(g) \subset \mathbf{P}^{*}(f).$ Next, take any $\gamma \in \mathbf{P}^{+}(f).$ (The case where $\gamma \in \mathbf{P}^{-}(f)$ is proved similarly.)
By Puiseux's theorem, we can write
\begin{eqnarray*}
f(x, y) &=& f_1(x, y)  \big(y - \gamma(x) \big)^{\mult_f(\gamma)}, \\
g(x, y) &=& g_1(x, y)  \big(y - \gamma(x) \big)^{\mult_g(\gamma)}, 
\end{eqnarray*}
where $f_1, g_1 \in \mathbb{R}\{x^{\frac{1}{d}}\}[y]$ with $f_1(x, \gamma(x)) \ne 0$ and $g_1(x, \gamma(x)) \ne 0$ for $x > 0$ small enough and $d$ is a positive integer.
From \eqref{EqnLimit}, we deduce that either $\mult_f(\gamma) > \mult_g(\gamma)$ or $\mult_f(\gamma) =  \mult_g(\gamma).$ Furthermore, in the second case we have
\begin{eqnarray*}
\lim_{(x, y) \to (0^{+}, 0)}\frac{f_1(x, y)}{g_1(x, y)} &=& \lim_{(x, y) \to (0^{+}, 0)}\frac{f(x, y)}{g(x, y)} \ = \ 0,
\end{eqnarray*}
which yields $\mathrm{ord} f_1(x, \gamma(x)) > \mathrm{ord} g_1(x, \gamma(x)).$ Therefore (iii) holds.

Now let the conditions (i)--(iii) hold. Clearly, it suffices to show that
\begin{eqnarray*}
\lim_{(x, y) \to (0^{+}, 0)}\frac{f(x, y)}{g(x, y)} &=& 0 \quad \textrm{ and } \quad \lim_{(x, y) \to (0^{-}, 0)}\frac{f(x, y)}{g(x, y)} \ = \ 0.
\end{eqnarray*}
We prove only the first equality; the second equality can be proved similarly.

The condition~(iii) gives $\mathbf{P}^{+}(g) \subset \mathbf{P}^{+}(f).$ Hence, there exist $\delta > 0$, a positive integer $d$ and $f_1, g_1 \in \mathbb{R}\{x^{\frac{1}{d}}\}[y]$ with $g_1^{-1}(0) \cap \mathbb{B}_\delta \subset \{(0, 0)\}$ such that for all $(x, y) \in \mathbb{B}_\delta,$ we have
\begin{eqnarray*}
f(x, y) &=& f_1(x, y) \prod_{\varphi \in  \mathbf{P}^{+}(f)}  \big(y - \varphi(x) \big)^{\mult_f(\varphi)}, \\
g(x, y) &=& g_1(x, y)  \prod_{\varphi \in  \mathbf{P}^{+}(f)}  \big(y - \varphi(x) \big)^{\mult_g(\varphi)}.
\end{eqnarray*}
For simplicity, write $\mathbb{B}^{+}_\delta := \mathbb{B}_\delta \cap \{x \geqslant 0\}.$ By assumption, the semi-algebraic function $h \colon \mathbb{B}^{+}_\delta \setminus \{(0, 0)\} \to \mathbb{R}$ defined by
\begin{eqnarray*}
h(x, y) &=& \frac{f_1(x, y)}{g_1(x, y)} \prod_{\varphi \in  \mathbf{P}^{+}(f)}  \big(y - \varphi(x) \big)^{\mult_f(\varphi) -  \mult_g(\varphi)},
\end{eqnarray*}
is well-defined and continuous. Furthermore, a direct calculation shows that for all $t \in (0, \delta),$
\begin{eqnarray*}
\min_{(x, y) \in \mathbb{S}^{+}_t} h(x, y) &=& \inf_{(x, y) \in \mathbb{S}^{+}_t \setminus g^{-1}(0)} \frac{f(x, y)}{g(x, y)}, \\
\max_{(x, y) \in \mathbb{S}^{+}_t} h(x, y) & = &  \sup_{(x, y) \in \mathbb{S}^{+}_t \setminus g^{-1}(0)} \frac{f(x, y)}{g(x, y)},
\end{eqnarray*}
where $\mathbb{S}^{+}_t := \mathbb{S}_t \cap \{x \geqslant 0\}.$ Applying Lemma~\ref{LemmaSAChoice} for the (nonempty) semi-algebraic set
\begin{eqnarray*}
S & := &  \Big  \{(t, (x, y)) \in (0, \delta)  \times \mathbb{B}^{+}_\delta \ \Big | \  (x, y) \in \mathrm{argmin}_{(x', y') \in \mathbb{S}^{+}_t} h(x', y') \Big \}
\end{eqnarray*}
and the projection $S  \to (0, \delta), ({t}, (x, y)) \mapsto {t},$ we get a semi-algebraic curve 
$$\gamma \colon (0, \delta) \to \mathbb{R}^2, \quad t \mapsto ((x(t), y(t)),$$ 
such that for all $t \in (0, \delta)$ we have 
\begin{eqnarray*}
\gamma(t) \ \in \ \mathbb{S}^{+}_t \quad \textrm{ and } \quad  h(\gamma(t)) &=& \min_{(x, y) \in \mathbb{S}^{+}_t} h(x, y).
\end{eqnarray*}
We will show that $\lim_{t \to 0^+} h(\gamma(t)) = 0.$

Indeed, by Lemma~\ref{MonotonicityLemma} and by shrinking $\delta$ (if necessary), we may assume that one of the following conditions hold:
\begin{itemize}
\item $x(t) \equiv 0;$
\item $x(t)  > 0$ and $g(\gamma(t)) \equiv 0;$ 
\item $x(t)  > 0$ and $g(\gamma(t)) \ne  0.$
\end{itemize}
In the first case, it follows from the condition~(i) and the assumption that $f$ and $g$ are $y$-regular that $h(\gamma(t)) \to 0$ as $t \to 0.$ 
In the second case, the germ of the curve $t \mapsto \gamma(t)$ is a right half-branch of $g^{-1}(0),$ which, together with the condition~(iii), yields that $h(\gamma(t)) \to 0$ as $t \to 0.$  Finally, assume that the third case is satisfied. Then
\begin{eqnarray*}
h(\gamma(t)) &=& \frac{f(\gamma(t)) }{g(\gamma(t))},
\end{eqnarray*}
and so $\gamma(t)$ is an optimal solution of the problem $\inf_{(x, y)
	\in \mathbb{S}^{+}_t \setminus g^{-1}(0)} \frac{f(x, y)}{g(x, y)}
	.$ By the Lagrange multiplier theorem, the vectors $\nabla
	(\frac{f}{g})(\gamma(t))$ and $\gamma(t)$ are linearly dependent,
	i.e., $G_{f, g}(\gamma(t)) \equiv 0.$ 
Since $x(t) > 0$ for all $t \in (0, \delta),$ the germ of the curve $t \mapsto \gamma(t)$ is a right half-branch of $G_{f, g}^{-1}(0).$ This, together with the condition~(ii), yields that $h(\gamma(t)) \to 0$ as $t \to 0.$

Summarily, we always have 
\begin{eqnarray*}
\inf_{(x, y) \in \mathbb{S}^{+}_t  \setminus g^{-1}(0)} \frac{f(x, y)}{g(x, y)}  &=& \min_{(x, y) \in \mathbb{S}^{+}_t} h(x, y) \ = \ h(\gamma(t)) \ \to \ 0 \quad \textrm{ as } \quad t \to 0^+.
\end{eqnarray*}
Similarly, we also have 
\begin{eqnarray*}
\sup_{(x, y) \in \mathbb{S}^{+}_t  \setminus g^{-1}(0)} \frac{f(x, y)}{g(x, y)}  &=& \max_{(x, y) \in \mathbb{S}^{+}_t} h(x, y) \ \to \ 0 \quad \textrm{ as } \quad t \to 0^+.
\end{eqnarray*}
Therefore, $\displaystyle\lim_{(x, y) \to (0^{+}, 0)}\frac{f(x, y)}{g(x, y)} = 0,$ which ends the proof of the theorem.
\end{proof}

In what follows, if $\varphi$ is a Newton--Puiseux root of $f$ with multiplicity $\mult_f(\varphi),$ then we let
\begin{eqnarray*}
\mult_f(\varphi\mod x^N) &:=& \mult_f(\varphi).
\end{eqnarray*}
The following corollary is a truncated version of Theorem~\ref{Theorem415}. 

\begin{corollary} \label{Corollary416}
Under the assumptions of Theorem~\ref{Theorem415}, let  $N$ be an integer such that 
\begin{eqnarray*}
N &>& \max\{\mathcal N(\deg  f + \deg G_{f, g}), \mathcal N(\deg  g + \deg G_{f, g})\}.
\end{eqnarray*}
Then $\displaystyle\lim_{(x, y) \to (0, 0)}\frac{f(x, y)}{g(x, y)} = 0$ if and only if the following conditions are satisfied:
\begin{enumerate}[{\rm (i)}]
\item $\ord f > \ord g.$
\item For $*\in\{\pm\}$ and for any $\gamma_N \in \mathbf{P}_N^{*}(G_{f,g}) \setminus \big (\mathbf{P}_N^{*}(f) \cup \mathbf{P}_N^{*}(g) \big) ,$ we have
\begin{eqnarray*}
\ord{f(* x, \gamma_N(x))} &>& \ord{g(* x, \gamma_N(x))}.
\end{eqnarray*}

\item For $*\in\{\pm\}$ and for any $\gamma_N \in \mathbf{P}_N^{*}(g),$ we have $\gamma_N \in \mathbf{P}_N^{*}(f)$ and either $\mult_f(\gamma_N) > \mult_g(\gamma_N)$ or $\mult_f(\gamma_N) = \mult_g(\gamma_N)$ and
\begin{eqnarray*}
\sum_{\varphi_N \in \mathbf{P}_N(f)\setminus\{\gamma_N\}}\mult_f(\varphi_N)\ \ord(\gamma_N - \varphi_N) &>& \sum_{\psi_N\in \mathbf{P}_N(g)\setminus\{\gamma_N\}}\mult_g(\psi_N)\ \ord(\gamma_N - \psi_N).
\end{eqnarray*}
\end{enumerate}
\end{corollary}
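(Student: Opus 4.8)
The plan is to deduce Corollary~\ref{Corollary416} from Theorem~\ref{Theorem415} by rewriting, one by one, the three conditions there in terms of $N$-truncated Newton--Puiseux roots, exactly in the spirit of the proof of Corollary~\ref{Corollary49}. Condition~(i), $\ord f>\ord g$, involves only the polynomials and needs no argument. For the other two the toolkit is fixed: Theorem~\ref{Theorem36}(iii) (to identify roots with their truncations), Theorem~\ref{Theorem37} applied to suitable pairs (to detect membership in a second root set after truncation), Theorem~\ref{Theorem36}(iv) (orders of $f$ and $g$ are unchanged under truncation when the series is not a root), the multiplicity corollary following Theorem~\ref{Theorem39} together with the convention $\mult_f(\varphi\mod x^N):=\mult_f(\varphi)$ (to transfer multiplicity data), and the elementary remark that if $\ord(\gamma-\varphi)<N$ then $\ord\big(\gamma\mod x^N-\varphi\mod x^N\big)=\ord(\gamma-\varphi)$.

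For condition~(ii) I would first show that $\gamma\mapsto\gamma\mod x^N$ is a bijection from $\mathbf P^{*}(G_{f,g})\setminus(\mathbf P^{*}(f)\cup\mathbf P^{*}(g))$ onto $\mathbf P_N^{*}(G_{f,g})\setminus(\mathbf P_N^{*}(f)\cup\mathbf P_N^{*}(g))$. Indeed, by Theorem~\ref{Theorem36}(iii) this map is already a bijection $\mathbf P^{*}(G_{f,g})\to\mathbf P_N^{*}(G_{f,g})$, and applying Theorem~\ref{Theorem37} to the pairs $(G_{f,g},f)$ and $(G_{f,g},g)$ (legitimate since $N>\mathcal N(\deg f+\deg G_{f,g})$ and $N>\mathcal N(\deg g+\deg G_{f,g})$) shows that for $\gamma\in\mathbf P^{*}(G_{f,g})$ one has $\gamma\in\mathbf P^{*}(f)$ if and only if $\gamma\mod x^N\in\mathbf P_N^{*}(f)$, and likewise with $g$ in place of $f$. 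Then, for such a $\gamma$ — which is real and is not a Newton--Puiseux root of $f$ nor of $g$ — Theorem~\ref{Theorem36}(iv) applied to $f(\pm x,y)$ and $g(\pm x,y)$ (whose degrees are $\deg f$ and $\deg g$) yields $\ord f(\pm x,\gamma(x))=\ord f(\pm x,\gamma\mod x^N)$ and $\ord g(\pm x,\gamma(x))=\ord g(\pm x,\gamma\mod x^N)$, so the inequality in Theorem~\ref{Theorem415}(ii) matches the one in condition~(ii) root by root.

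For condition~(iii) the relevant $\gamma$ run over the genuine real roots of $g$, identified with $\mathbf P_N^{*}(g)$ by Theorem~\ref{Theorem36}(iii). The membership ``$\gamma\in\mathbf P^{*}(f)$'' translates to ``$\gamma\mod x^N\in\mathbf P_N^{*}(f)$'': one implication is trivial, and for the other I would argue that if $\varphi\in\mathbf P^{*}(f)$ has the same $N$-truncation as $\gamma\in\mathbf P^{*}(g)$, then $\varphi$ and $\gamma$ are Newton--Puiseux roots of the $y$-regular polynomial $fg$ with $\ord(\varphi-\gamma)\geqslant N$, so by Theorem~\ref{Theorem36}(i) applied to $fg$ we get $\varphi=\gamma$ and hence $\gamma\in\mathbf P^{*}(f)$. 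Once $\gamma\in\mathbf P^{*}(f)\cap\mathbf P^{*}(g)$, the comparison of $\mult_f(\gamma)$ with $\mult_g(\gamma)$ is immediate from the convention, and the two sums in Theorem~\ref{Theorem415}(iii) turn into those in condition~(iii) via the bijections $\mathbf P(f)\to\mathbf P_N(f)$ and $\mathbf P(g)\to\mathbf P_N(g)$, the convention on multiplicities, and the equalities $\ord(\gamma-\varphi)=\ord\big(\gamma\mod x^N-\varphi\mod x^N\big)$ for $\varphi\in\mathbf P(f)\setminus\{\gamma\}$ and $\ord(\gamma-\psi)=\ord\big(\gamma\mod x^N-\psi\mod x^N\big)$ for $\psi\in\mathbf P(g)\setminus\{\gamma\}$, which hold because Theorem~\ref{Theorem36}(i) bounds these orders by $\mathcal N(\deg f)$, resp.\ $\mathcal N(\deg g)$, both strictly less than $N$.

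The step I expect to be the main obstacle is not any single argument above but the bookkeeping of truncation thresholds: one must confirm that the single hypothesis $N>\max\{\mathcal N(\deg f+\deg G_{f,g}),\mathcal N(\deg g+\deg G_{f,g})\}$ dominates every bound used. All the bounds that appear — $\mathcal N(\deg f)$, $\mathcal N(\deg g)$, $\mathcal N(\deg G_{f,g})$, $\mathcal N(\deg f+\deg G_{f,g})$, $\mathcal N(\deg g+\deg G_{f,g})$ — are dominated because $\mathcal N$ is increasing and $\deg G_{f,g}\geqslant 1$ (as $G_{f,g}\not\equiv 0$ vanishes at the origin), \emph{except} for the bound $\mathcal N(\deg f+\deg g)$ entering the analysis of condition~(iii) through Theorem~\ref{Theorem36}(i) applied to $fg$. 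Covering it amounts to the a priori degree inequality $\deg G_{f,g}\geqslant\min\{\deg f,\deg g\}$, which I would establish by inspecting the top-degree homogeneous part of $G_{f,g}=g\,(yf_x-xf_y)-f\,(yg_x-xg_y)$ together with the $y$-regularity normalizations already made; verifying this cleanly is, in my view, the only non-routine point of the proof.
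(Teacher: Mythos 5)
Your treatment of condition~(ii) coincides with the paper's own proof: the same applications of Theorem~\ref{Theorem37} to the pairs $(G_{f,g},f)$ and $(G_{f,g},g)$, the same intersection argument, and the same appeal to Theorem~\ref{Theorem36}(iv). For condition~(iii) the paper only writes that the equivalence ``follows directly from Theorems~\ref{Theorem36} and \ref{Theorem37}'' and leaves the details to the reader; your filling-in is the natural one, and you have correctly isolated the genuine crux: in the direction from the truncated condition back to Theorem~\ref{Theorem415}(iii), one must conclude from $\gamma\mod x^N=\varphi\mod x^N$ (with $\gamma\in \mathbf{P}^{*}(g)$, $\varphi\in\mathbf{P}^{*}(f)$) that $\gamma=\varphi$, and the only tool available (Theorem~\ref{Theorem36}(i) applied to $fg$, i.e.\ Theorem~\ref{Theorem37} for the pair $(g,f)$) requires $N>\mathcal N(\deg f+\deg g)$, a bound not visibly contained in the stated hypothesis. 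The same identification is needed to make $\mult_f(\gamma_N)$ refer to the multiplicity of $\gamma$ itself rather than of some other root of $f$ with the same truncation.

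The problem is that the inequality you propose in order to close this, $\deg G_{f,g}\geqslant\min\{\deg f,\deg g\}$, is false. Writing $w:=x^2+y^2$ and $R:=y\partial_x-x\partial_y$, one has $G_{f,g}=g\,R(f)-f\,R(g)$ and $R(w)=0$, whence $G_{pu+qv,\,ru+sv}=(ps-qr)\,G_{u,v}$ for all $p,q,r,s\in\mathbb R[w]$. Taking $u=x$, $v=y$ (so that $G_{u,v}=w$) and the unimodular matrix $\left(\begin{smallmatrix}1&w\\ w&1+w^2\end{smallmatrix}\right)$ yields $f=x+wy$ of degree $3$ and $g=wx+(1+w^2)y$ of degree $5$ with $G_{f,g}=x^2+y^2$ of degree $2<3=\min\{\deg f,\deg g\}$; these $f,g$ are coprime, vanish at the origin, and (since $R$ commutes with rotations) a generic rotation makes $f$, $g$ and $G_{f,g}$ simultaneously $y$-regular without changing any degree, so the failure is compatible with all standing assumptions. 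For such pairs the stated threshold $N>\max\{\mathcal N(\deg f+\deg G_{f,g}),\mathcal N(\deg g+\deg G_{f,g})\}$ does not dominate $\mathcal N(\deg f+\deg g)$, and your argument for the step ``$\gamma_N\in\mathbf P_N^{*}(f)$ implies $\gamma\in\mathbf P^{*}(f)$'' is not justified. This is a genuine gap, located exactly where you predicted the difficulty would be; the paper's one-line disposal of (iii) does not address it either, and the clean repair is to enlarge the truncation order so that it also exceeds $\mathcal N(\deg f+\deg g)$ (or to invoke a sharper separation bound for roots of $f$ versus roots of $g$ than the one furnished by Theorem~\ref{Theorem36}(i) applied to $fg$).
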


\begin{proof}
It is enough to show that the conditions (ii)--(iii) are equivalent to the conditions (ii)--(iii) in Theorem~\ref{Theorem415}.

(ii) $\Leftrightarrow$ Theorem~\ref{Theorem415}(ii):
By Theorem~\ref{Theorem37} and the choice of $N,$ the truncation mapping $\gamma \mapsto \gamma \mod x^N$ induces 
a bijection between $\mathbf{P}^{*}(G_{f,g}) \setminus \mathbf{P}^{*}(f)$ and $\mathbf{P}_N^{*}(G_{f,g})\setminus \mathbf{P}_N^{*}(f)$ and a bijection between $\mathbf{P}^{*}(G_{f,g}) \setminus \mathbf{P}^{*}(g)$ and $\mathbf{P}_N^{*}(G_{f,g})\setminus \mathbf{P}_N^{*}(g).$
Observe that
\begin{eqnarray*}
\mathbf{P}^{*}(G_{f,g})\setminus \big (\mathbf{P}^{*}(f) \cup \mathbf{P}^{*}(g) \big ) &=& \big (\mathbf{P}^{*}(G_{f,g})\setminus \mathbf{P}^{*}(f) \big ) \cap \big (\mathbf{P}^{*}(G_{f,g})\setminus \mathbf{P}^{*}(g) \big ), \\
\mathbf{P}_N^{*}(G_{f,g})\setminus \big (\mathbf{P}_N^{*}(f) \cup
\mathbf{P}_N^{*}(g) \big ) &=&  \big (\mathbf{P}_N^{*}(G_{f,g})\setminus \mathbf{P}_N^{*}(f) \big ) \cap \big (\mathbf{P}_N^{*}(G_{f,g})\setminus \mathbf{P}_N^{*}(g) \big ).
\end{eqnarray*}
Therefore, the mapping $\gamma \mapsto \gamma \mod x^N$ also induces a bijection between $\mathbf{P}^{*} \big (G_{f,g})\setminus (\mathbf{P}^{*}(f) \cup \mathbf{P}^{*}(g) \big )$ and $\mathbf{P}_N^{*}(G_{f,g}) \setminus \big (\mathbf{P}_N^{*}(f)\cup \mathbf{P}_N^{*}(g) \big ).$
In addition, by Theorem~\ref{Theorem36}(iv) and the choice of $N,$ one has for all $\gamma \in \mathbf{P}^{*} (G_{f,g}) \setminus \big (\mathbf{P}^{*}(f) \cup \mathbf{P}^{*}(g) \big),$
\begin{eqnarray*}
\ord{f(* x, \gamma(x))} &=& \ord{f(* x, \gamma(x)\mod x^N)}, \\ 
\ord{g(* x, \gamma(x))} &=& \ord{g(* x, \gamma(x)\mod x^N)}.
\end{eqnarray*}
Hence (ii) and Theorem~\ref{Theorem415}(ii) are equivalent.

(iii) $\Leftrightarrow$ Theorem~\ref{Theorem415}(iii): This follows directly from Theorems~\ref{Theorem36} and \ref{Theorem37}. The details are left to the reader.
\end{proof}

\begin{remark}[compare \cite{Xiao2015}] \label{Remark417}{\rm
(i) By definition, each truncated Puiseux series $(x, \gamma_N(x))$ gives a polynomial mapping $\mathbb{R} \to \mathbb{R}^2, t \mapsto (at^d, \gamma_N(at^d))$ for some $a \in \mathbb{R}, a \ne 0,$ and $d \in \mathbb{N}.$ In view of Corollary~\ref{Corollary416}, for determining the existence of limits of rational functions, it suffices to study limits along some polynomial curves. 

(ii) By the curve selection lemma (see \cite{Milnor1968}), it is not hard to see that for two {\em multivariate} polynomials $f, g \in\mathbb R[z],$ the (finite) limit $\displaystyle \lim _{z \to 0} \frac{f(z)}{g(z)}$ does not exist if and only if one of the following conditions holds:
\begin{enumerate}[$\bullet$]
\item There exists a polynomial mapping $u \colon \mathbb R\to \mathbb R^n$ such that 
$$u(0) = 0,\ \ g(u(t))\not= 0  \text{ for } 0<|t|\ll 1\ \text{ and }\ \lim _{t \to 0} \frac{f(u(t))}{g(u(t))}=\infty .$$
\item There exist two polynomial mappings $u\colon\mathbb R\to \mathbb R^n$ and $w\colon\mathbb R\to \mathbb R^n$ such that 
$$u(0)=w(0)=0,\ \ g(u(t))g(w(t))\not= 0  \text{ for } 0<|t|\ll 1 \ \text{ and }\ \lim _{t \to 0} \frac{f(u(t))}{g(u(t))}\not=\lim _{t \to 0} \frac{f(w(t))}{g(w(t))} .$$
\end{enumerate}
This fact is just the main result in \cite[Theorem 3.3]{Xiao2015} with a different (long) proof.
}\end{remark}

\section{Ranges, an algorithm and numerical experiments}\label{Section5}
\subsection{Ranges}
For the case when the limit $\displaystyle\lim_{(x, y) \to (0, 0)}\frac{f(x, y)}{g(x, y)}$ does not exist, we still can compute the {\em (numerical) range} of limits, defined to be the following set
$$\mathcal{R}_{f, g} :=\left\{
L\in \overline{\mathbb{R}} : \ \text{there is a sequence } (x_k,y_k)\to(0,0) \ \text{ s.t. } \displaystyle\lim_{(x_k, y_k) \to (0, 0)}\frac{f(x_k, y_k)}{g(x_k, y_k)}=L
\right\}.$$
By Theorem~\ref{TarskiSeidenbergTheorem}, it is not hard to see that $\mathcal{R}_{f, g}$ is a semi-algebraic set and so it is a finite union of points and intervals. In general, the range $\mathcal{R}_{f, g}$ may be not connected. However, as shown below, if  the denominator $g$ has an isolated zero at the origin, then $\mathcal{R}_{f, g}$ is a closed interval. 

\begin{theorem}\label{Proposition410}
Assume that $g$ has an isolated zero at the origin and the polynomials $f, g$ and $F_{f, g}$ are $y$-regular. Let 
$$\mathrm{MIN}:=\min_{\substack{*\in\{\pm\}\\ \gamma\in \mathbf{P}^{*}(F_{f, g})}} \lim_{x\to 0^+}\frac{f(* x, \gamma(x))}{g(* x, \gamma(x))} \in \overline{\mathbb R}$$
and
$$\mathrm{MAX}:=\max_{\substack{*\in\{\pm\}\\ \gamma\in \mathbf{P}^{*}(F_{f, g})}} \lim_{x\to 0^+}\frac{f(* x, \gamma(x))}{g(* x, \gamma(x))} \in \overline{\mathbb R}.$$
Then $$\mathcal{R}_{f, g} = [\mathrm{MIN},\mathrm{MAX}].$$
In particular, if $\mathrm{MIN}=\mathrm{MAX} \in \mathbb{R},$ then the limit $\displaystyle\lim_{(x, y) \to (0, 0)}\frac{f(x, y)}{g(x, y)}$ exists and is equal to this number.
\end{theorem}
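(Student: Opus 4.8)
The plan is to follow the scheme of the proof of Theorem~\ref{Theorem48}, but, instead of merely detecting whether the quotient tends to $0$, to control the whole oscillation of $\frac fg$ on the level curves of $g$ near the origin. Replacing $g$ by $-g$ if necessary --- which replaces $\frac fg$ by $-\frac fg$, hence swaps $\mathrm{MIN}$ and $\mathrm{MAX}$ up to sign and preserves all the hypotheses --- I may assume $g\geqslant 0$ in a neighbourhood of the origin. Let $r>0$ and $\delta>0$ be the constants furnished by Lemma~\ref{Lemma43}, so that for every $t\in(0,\delta)$ the set $g^{-1}(t)\cap\mathbb B_r=g^{-1}(t)\cap\mathrm{int}(\mathbb B_r)$ is a non-singular connected closed curve surrounding the origin; since $g\equiv t\neq 0$ on it, the function $\frac fg$ is continuous there, so I may set
$$m(t):=\min_{(x,y)\in g^{-1}(t)\cap\mathbb B_r}\frac{f(x,y)}{g(x,y)},\qquad M(t):=\max_{(x,y)\in g^{-1}(t)\cap\mathbb B_r}\frac{f(x,y)}{g(x,y)}.$$

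The core of the argument is the claim that $\lim_{t\to 0^+}m(t)=\mathrm{MIN}$ and $\lim_{t\to 0^+}M(t)=\mathrm{MAX}$; I would prove the statement for $m$, that for $M$ being symmetric. For the bound $\liminf_{t\to 0^+}m(t)\geqslant\mathrm{MIN}$ I would apply semi-algebraic choice (Lemma~\ref{LemmaSAChoice}) to the semi-algebraic set of minimisers and then the monotonicity lemma (Lemma~\ref{MonotonicityLemma}), exactly as in the proof of Theorem~\ref{Theorem48}, obtaining a semi-algebraic curve $t\mapsto\gamma(t)\in g^{-1}(t)\cap\mathrm{int}(\mathbb B_r)$ realising $m(t)$; the Lagrange multiplier theorem gives $F_{f,g}(\gamma(t))\equiv 0$, and, $F_{f,g}$ being $y$-regular, the first coordinate of $\gamma(t)$ is eventually nonzero and of constant sign, so the germ of $\gamma$ is a half-branch of $F_{f,g}^{-1}(0)$, corresponding to some $\gamma_0\in\mathbf P^{*_0}(F_{f,g})$ with $*_0\in\{\pm\}$. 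In particular $\mathbf P^{+}(F_{f,g})\cup\mathbf P^{-}(F_{f,g})\neq\emptyset$, so $\mathrm{MIN}$ and $\mathrm{MAX}$ are the extrema of a nonempty finite subset of $\overline{\mathbb R}$; and $m(t)=\frac{f(\gamma(t))}{g(\gamma(t))}\to\lim_{x\to 0^+}\frac{f(*_0x,\gamma_0(x))}{g(*_0x,\gamma_0(x))}\geqslant\mathrm{MIN}$. For the reverse bound $\limsup_{t\to 0^+}m(t)\leqslant\mathrm{MIN}$ I would pick $\gamma_1\in\mathbf P^{*_1}(F_{f,g})$ achieving $\mathrm{MIN}$; since $g$ has an isolated zero at the origin it has no real Newton--Puiseux root, so $g(*_1x,\gamma_1(x))\not\equiv 0$, whence (using $g\geqslant 0$) $g(*_1x,\gamma_1(x))>0$ and strictly monotone in $x$ for $0<x\ll 1$. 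Setting $t=g(*_1x,\gamma_1(x))$ and inverting, one gets points of $g^{-1}(t)\cap\mathbb B_r$ along which $\frac fg\to\mathrm{MIN}$ as $t\to 0^+$, so $m(t)\leqslant\mathrm{MIN}+o(1)$; this proves the claim.

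Granting the claim, I would derive $\mathcal R_{f,g}=[\mathrm{MIN},\mathrm{MAX}]$ as follows. For the inclusion $\mathcal R_{f,g}\subseteq[\mathrm{MIN},\mathrm{MAX}]$: if $(x_k,y_k)\to(0,0)$ with $\frac{f(x_k,y_k)}{g(x_k,y_k)}\to L$, then for large $k$ one has $(x_k,y_k)\in\mathrm{int}(\mathbb B_r)$ and $t_k:=g(x_k,y_k)\in(0,\delta)$ (here $t_k>0$ because $g\geqslant 0$ and $(x_k,y_k)\notin g^{-1}(0)$), hence $m(t_k)\leqslant\frac{f(x_k,y_k)}{g(x_k,y_k)}\leqslant M(t_k)$, and letting $k\to\infty$ yields $L\in[\mathrm{MIN},\mathrm{MAX}]$. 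For the reverse inclusion: the endpoints $\mathrm{MIN}$ and $\mathrm{MAX}$ belong to $\mathcal R_{f,g}$, being limits of $\frac fg$ along half-branches of $F_{f,g}^{-1}(0)$ (note $\frac fg$ is defined along such half-branches since $g$ has no real Newton--Puiseux root); and if $\mathrm{MIN}<L<\mathrm{MAX}$, so $L\in\mathbb R$, then the claim gives $m(t)<L<M(t)$ for all small $t>0$, and the connectedness of $g^{-1}(t)\cap\mathbb B_r$ together with the intermediate value theorem produces $p_t\in g^{-1}(t)\cap\mathbb B_r$ with $\frac{f(p_t)}{g(p_t)}=L$; along a sequence $t_k\to 0^+$, a convergent subsequence of $(p_{t_k})$ has a limit $p_*\in\mathbb B_r$ with $g(p_*)=0$, hence $p_*=(0,0)$ since $r$ is small and the origin is an isolated zero of $g$, so $L\in\mathcal R_{f,g}$. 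Finally, if $\mathrm{MIN}=\mathrm{MAX}\in\mathbb R$, then $\mathcal R_{f,g}=\{\mathrm{MIN}\}$ and therefore the limit exists and equals $\mathrm{MIN}$.

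I expect the main difficulty to be the key claim $\lim_{t\to 0^+}m(t)=\mathrm{MIN}$ (and its counterpart for $M$): converting the constrained optimisation on the level curves $g^{-1}(t)$ into a statement purely about half-branches of $F_{f,g}^{-1}(0)$. This is where the Lagrange multiplier theorem, semi-algebraic choice and the monotonicity lemma must be combined to deal with the possible non-smoothness of the minimising curve and the a priori lack of control on the sign of its first coordinate; the matching lower bound additionally relies on $g$ having no real Newton--Puiseux root, which guarantees that $g$ is positive and strictly monotone along the half-branch realising $\mathrm{MIN}$. The remaining parts --- the sandwich inclusion and the intermediate-value inclusion --- should be routine.
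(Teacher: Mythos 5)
Your proposal is correct and follows essentially the same route as the paper's proof: reduce to $g\geqslant 0$, use Lemma~\ref{Lemma43} together with semi-algebraic choice, the monotonicity lemma and Lagrange multipliers to realise the extrema of $f/g$ on the level sets $g^{-1}(t)\cap\mathbb B_r$ along half-branches of $F_{f,g}^{-1}(0)$, sandwich an arbitrary sequence between them, and use connectedness of the level curves plus the intermediate value theorem for the reverse inclusion. The only difference is that you spell out the two-sided argument for $\lim_{t\to 0^+}m(t)=\mathrm{MIN}$ (in particular the $\limsup$ bound obtained by inverting $t=g(*_1x,\gamma_1(x))$ along the minimising half-branch), a step the paper dismisses with ``Clearly''; this is a welcome clarification but not a different method.
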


\begin{proof}
The last statement is clear from the first one so it is enough to prove the first one.
Similarly to the proof of Theorem~\ref{Theorem48}, there are positive constants $r, \delta$ and two semi-algebraic curves 
$$\alpha,\beta\colon(0,\delta)\to \mathbb R^2$$
with $F_{f, g}(\alpha(t)) \equiv 0$ and $F_{f, g}(\beta(t)) \equiv 0$ such that for all $t \in (0, \delta)$ we have that $\alpha(t), \beta(t) \in g^{-1}(t) \cap   \mathbb{B}_r$ are respectively optimal solutions of the problems $\min_{(x, y) \in g^{-1}(t) \cap \mathbb{B}_r} f(x, y)$ and $\max_{(x, y) \in g^{-1}(t) \cap \mathbb{B}_r} f(x, y)$.
Clearly 
\begin{equation}\label{MINMAX}
\mathrm{MIN}=\lim_{t \to 0^+}\frac{f(\alpha(t))}{g(\alpha(t))} \quad \text{ and } \quad \mathrm{MAX}=\lim_{t \to 0^+}\frac{f(\beta(t))}{g(\beta(t))}.
\end{equation}

Let $(x_k, y_k) \in \mathbb{R}^2 \setminus g^{-1}(0)$ be a sequence tending to $(0, 0)$ with $\displaystyle\lim_{(x_k, y_k) \to (0, 0)}\frac{f(x_k, y_k)}{g(x_k, y_k)}=L.$ 
Then for all sufficiently large $k,$ we have $(x_k, y_k) \in \mathbb{B}_r$ and $t_k := g(x_k, y_k) \in (0, \delta).$ 
For such $k$, the following inequalities hold:
\begin{eqnarray*}
 \frac{f(\alpha(t_k))}{g(\alpha(t_k))}\ = \min_{(x, y) \in g^{-1}(t_k) \cap \mathbb{B}_r} \frac{f(x, y)}{g(x, y)}  &\leqslant& 
\frac{f (x_k, y_k) }{g (x_k, y_k) } \ \leqslant \ \max_{(x, y) \in g^{-1}(t_k) \cap \mathbb{B}_r} \frac{f(x, y)}{g(x, y)}\ = \ \frac{f(\beta(t_k))}{g(\beta(t_k))}.
\end{eqnarray*}
Letting $k\to\infty,$ we get $\mathrm{MIN}\leqslant L\leqslant\mathrm{MAX}.$
Thus $\mathcal{R}_{f, g} \subset [\mathrm{MIN}, \mathrm{MAX}].$

On the other hand, note that $\mathrm{MIN},\mathrm{MAX}\in \mathcal{R}_{f, g}.$ Let $\widetilde L\in (\mathrm{MIN},\mathrm{MAX}).$
Let us prove $\widetilde L\in \mathcal{R}_{f, g}.$ Without loss of generality, assume that $g \geqslant 0.$
By Lemma~\ref{Lemma43} and by shrinking $r$ and $\delta$ if necessary, for each $t$ in $(0, \delta)$, the set $g^{-1}(t)\cap \mathbb B_r$ is a connected closed curve. Hence, by continuity, $f(g^{-1}(t)\cap \mathbb B_r)$ and so $\frac{f}{g}(g^{-1}(t)\cap \mathbb B_r)$ are closed intervals.
Moreover, for all $t>0$ small enough, by~\eqref{MINMAX}, we have 
$$\frac{f(\alpha(t))}{g(\alpha(t))}\leqslant\widetilde L\leqslant \frac{f(\beta(t))}{g(\beta(t))}.$$
Thus $\widetilde L\in \frac{f}{g}(g^{-1}(t)\cap \mathbb B_r).$
This implies  $\widetilde L\in \mathcal{R}_{f, g}$ and so $[\mathrm{MIN},\mathrm{MAX}]\subset \mathcal{R}_{f, g}.$
Consequently, $\mathcal{R}_{f, g} = [\mathrm{MIN},\mathrm{MAX}].$
\end{proof}

\begin{corollary}
Under the assumptions of Theorem~\ref{Proposition410}. Let $N$ be an integer such that
\begin{eqnarray*}
N &>& \max\{\mathcal N(\deg  f+\deg F_{f,g}), \mathcal N(\deg  g)\}.
\end{eqnarray*}
We have
$$\mathrm{MIN}=\left\{
\begin{array}{llll}
\displaystyle \min_{\substack{*\in\{\pm\}\\ \gamma_N\in \mathbf{P}^{*}_N(F_{f, g})}} \lim_{x\to 0^+}\frac{f(* x, \gamma_N(x))}{g(* x, \gamma_N(x))} & \text{ if }\ \mathbf{P}^{+}_N(f)\cup \mathbf{P}^{-}_N(f)=\emptyset\\
\displaystyle \min\left\{0,\min_{\substack{*\in\{\pm\}\\ \gamma_N\in \mathbf{P}^{*}_N(F_{f, g})\setminus\mathbf{P}^{*}_N(f)}} \lim_{x\to 0^+}\frac{f(* x, \gamma_N(x))}{g(* x, \gamma_N(x))}\right\} & \text{ otherwise }
\end{array}
\right.$$
and
$$\mathrm{MAX}=\left\{
\begin{array}{llll}
\displaystyle \max_{\substack{*\in\{\pm\}\\ \gamma_N\in \mathbf{P}^{*}_N(F_{f, g})}} \lim_{x\to 0^+}\frac{f(* x, \gamma_N(x))}{g(* x, \gamma_N(x))} & \text{ if }\ \mathbf{P}^{+}_N(f)\cup \mathbf{P}^{-}_N(f)=\emptyset\\
\displaystyle \max\left\{0,\max_{\substack{*\in\{\pm\}\\ \gamma_N\in \mathbf{P}^{*}_N(F_{f, g})\setminus\mathbf{P}^{*}_N(f)}} \lim_{x\to 0^+}\frac{f(* x, \gamma_N(x))}{g(* x, \gamma_N(x))}\right\} & \text{ otherwise. }
\end{array}
\right.$$

\end{corollary}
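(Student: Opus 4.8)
The plan is to deduce this from Theorem~\ref{Proposition410}, which already gives $\mathcal R_{f,g}=[\mathrm{MIN},\mathrm{MAX}]$ with $\mathrm{MIN}$ and $\mathrm{MAX}$ ranging over the \emph{untruncated} real Newton--Puiseux roots of $F_{f,g}$. Two facts must be matched against the truncated formula: replacing a root $\gamma\in\mathbf{P}^{*}(F_{f,g})$ by its truncation $\gamma_N:=\gamma\bmod x^{N}$ changes neither the membership ``$\gamma\in\mathbf{P}^{*}(f)$'' nor, when $\gamma\notin\mathbf{P}^{*}(f)$, the value $\lim_{x\to0^+}\frac{f(*x,\gamma(x))}{g(*x,\gamma(x))}$; and the value $0$ enters the extrema exactly when $f$ has a real half-branch at the origin. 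I would treat the case $*=-$ by passing to the $y$-regular polynomials $f(-x,y),g(-x,y),F_{f,g}(-x,y)$ (same degrees, same order, same real roots up to $x\mapsto -x$), so it suffices to argue for $*=+$.

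Since $N>\mathcal N(\deg f)$, Theorem~\ref{Theorem36}(iii) makes truncation a bijection $\mathbf{P}^{*}(f)\to\mathbf{P}^{*}_N(f)$ for $*\in\{\pm\}$, so $\mathbf{P}^{+}_N(f)\cup\mathbf{P}^{-}_N(f)=\emptyset$ precisely when $f$ has no real half-branch at the origin; this justifies the case split. Now fix $\gamma\in\mathbf{P}^{+}(F_{f,g})\setminus\mathbf{P}^{+}(f)$ (automatic in the first case, where $\mathbf{P}^{+}(f)=\emptyset$). By Theorem~\ref{Theorem37} applied with ``$f$''$=F_{f,g}$, ``$g$''$=f$ and $N>\mathcal N(\deg f+\deg F_{f,g})$, truncation restricts to a bijection $\mathbf{P}^{*}(F_{f,g})\setminus\mathbf{P}^{*}(f)\to\mathbf{P}^{*}_N(F_{f,g})\setminus\mathbf{P}^{*}_N(f)$ for $*\in\{\pm\}$. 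To see the limit is unchanged, I would verify $\gamma_N\notin\mathbf{P}_N(f)$ and $\gamma_N\notin\mathbf{P}_N(g)$ and then invoke Theorem~\ref{Theorem36}(iv) for $f$ and for $g$, obtaining $\nu(f(x,\gamma(x)))=\nu(f(x,\gamma_N(x)))$ and $\nu(g(x,\gamma(x)))=\nu(g(x,\gamma_N(x)))$; since $f(x,\gamma(x))\not\equiv0$ (as $\gamma\notin\mathbf{P}^{+}(f)$) and $g(x,\gamma(x))\not\equiv0$ (as $g$ has an isolated zero), the two quotients share their leading monomials and hence the same limit in $\overline{\mathbb R}$. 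The claim $\gamma_N\notin\mathbf{P}_N(g)$ uses only $N>\mathcal N(\deg g)$: $g$ has an isolated zero at the origin, so it has no real Newton--Puiseux root and $\mathbf{P}^{\pm}_N(g)=\emptyset$, while by Theorem~\ref{Theorem36}(ii) the truncation of any non-real root of $g$ carries a non-real coefficient and so cannot equal the real series $\gamma_N$. The claim $\gamma_N\notin\mathbf{P}_N(f)$ follows the same way for truncations of non-real roots of $f$; and were $\gamma_N$ the truncation of some $\psi\in\mathbf{P}^{+}(f)$, then $\gamma$ and $\psi$ would be distinct Newton--Puiseux roots of the $y$-regular polynomial $f\cdot F_{f,g}$ with $\mathrm{ord}(\gamma-\psi)\geqslant N>\mathcal N(\deg f+\deg F_{f,g})=\mathcal N(\deg(f\cdot F_{f,g}))$, contradicting Theorem~\ref{Theorem36}(i).

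For the bookkeeping: if $\gamma\in\mathbf{P}^{*}(F_{f,g})\cap\mathbf{P}^{*}(f)$ then $f(*x,\gamma(x))\equiv0$, while $g(*x,\gamma(x))\not\equiv0$ because the real curve traced by $\gamma$ meets $g^{-1}(0)$ only at the origin, so the limit along such a $\gamma$ equals $0$. In the first case of the corollary $\mathbf{P}^{\pm}(f)=\emptyset$, hence $\mathbf{P}^{*}(F_{f,g})=\mathbf{P}^{*}(F_{f,g})\setminus\mathbf{P}^{*}(f)$, and via the bijection and the limit-invariance above Theorem~\ref{Proposition410} becomes the first branch of the formula. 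In the second case $f$ has a real half-branch at the origin, along which $\frac fg\to0$, so $0\in\mathcal R_{f,g}=[\mathrm{MIN},\mathrm{MAX}]$, i.e.\ $\mathrm{MIN}\leqslant0\leqslant\mathrm{MAX}$. Writing $B$ for the set of values $\lim_{x\to0^+}\frac{f(*x,\gamma_N(x))}{g(*x,\gamma_N(x))}$ with $*\in\{\pm\}$ and $\gamma_N\in\mathbf{P}^{*}_N(F_{f,g})\setminus\mathbf{P}^{*}_N(f)$, and splitting each $\mathbf{P}^{*}(F_{f,g})$ into its intersection with $\mathbf{P}^{*}(f)$ (contributing only $0$) and its complement (contributing, bijectively, the values of $B$), Theorem~\ref{Proposition410} gives $\mathrm{MIN}=\min(\{0\}\cup B)$ whenever that intersection is nonempty for some $*$; combined with $\mathrm{MIN}\leqslant0$, which disposes of the remaining case, this is exactly $\mathrm{MIN}=\min\{0,\min B\}$, the second branch. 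The computation for $\mathrm{MAX}$ is symmetric.

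The only genuinely substantive point is the observation that an isolated real zero of $g$ forces every Newton--Puiseux root of $g$ to be non-real, which is what lets $\gamma_N\notin\mathbf{P}_N(g)$ go through with the weak hypothesis $N>\mathcal N(\deg g)$. The remainder, namely reducing the case $*=-$ to $y$-regular polynomials through $x\mapsto-x$ and checking that every appeal to Theorems~\ref{Theorem36} and~\ref{Theorem37} is covered by the stated bound on $N$ (using that $\mathcal N$ is increasing and $\deg F_{f,g}\leqslant\deg f+\deg g-2$), is routine.
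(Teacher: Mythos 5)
Your proof is correct and follows essentially the same route as the paper's: reduce to Theorem~\ref{Proposition410} via the truncation bijections of Theorem~\ref{Theorem37} (and Theorem~\ref{Theorem36}(ii)--(iii)) together with the limit-invariance supplied by Theorem~\ref{Theorem36}(iv), using that $\mathbf{P}^{\pm}(g)=\emptyset$ because the zero of $g$ is isolated. Your extra bookkeeping --- verifying $\gamma_N\notin\mathbf{P}_N(f)\cup\mathbf{P}_N(g)$ against truncations of \emph{complex} roots via Theorem~\ref{Theorem36}(ii), and explaining why adjoining the value $0$ is harmless even when $\mathbf{P}^{*}(F_{f,g})\cap\mathbf{P}^{*}(f)=\emptyset$ (since a real half-branch of $f$ already forces $\mathrm{MIN}\leqslant 0\leqslant\mathrm{MAX}$) --- spells out details the paper's one-paragraph proof leaves implicit.
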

\begin{proof}
In view of Theorems~\ref{Theorem36}(iv), by remaking that $\mathbf{P}^{*}(g)=\emptyset$, for $*\in\{\pm\}$ and $\gamma\in \mathbf{P}^{*}(F_{f, g})\setminus\mathbf{P}^{*}(f),$ we have
$$\nu(f(* x, \gamma(x)))=\nu(f(* x, \gamma_N(x))) \ \text{ and }\ \nu(g(* x, \gamma(x)))=\nu(g(* x, \gamma_N(x))).$$
Hence
$$\lim_{x\to 0^+}\frac{f(* x, \gamma(x))}{g(* x, \gamma(x))}=\lim_{x\to 0^+}\frac{f(* x, \gamma_N(x))}{g(* x, \gamma_N(x))}.$$
Furthermore, by the choice of $N$ and by Theorems~\ref{Theorem36}(ii)--(iii) and~\ref{Theorem37}, for $*\in\{\pm\}$, the truncation mapping $\varphi \mapsto \varphi_N:=\varphi \mod x^N$ induced a bijection between $\mathbf{P}^{*}(F_{f,g})$ and $\mathbf{P}_N^{*}(F_{f,g})$ and a bijection between $\mathbf{P}^{*}(F_{f, g})\setminus\mathbf{P}^{*}(f)$ and $\mathbf{P}_N^{*}(F_{f, g})\setminus\mathbf{P}_N^{*}(f)$. 
Consequently, in light of Theorem~\ref{Proposition410}, the corollary follows.
\end{proof}

We now present an algorithm for determining the limits/ranges of bivariate rational functions, which can be implemented  in a computer algebra system\footnote{The code for Maple, together with a Maple worksheet, is available at the following links: \url{https://drive.google.com/file/d/1Vz6-LLERLG1IaIPn8FBpbQ-WHFvIeWru/view} and \url{https://pan.baidu.com/s/1lG8mbjHs3S7Jgrys0KNPNw?pwd=ex9q}.}.


\subsection*{Algorithm BiLimit} \

INPUT: Two polynomials $f$ and $g$ in $\mathbb{Q}[x,y]$ of positive orders. 

OUTPUT: Decide whether or not the limit $\displaystyle\lim_{(x, y) \to (0, 0)}\frac{f(x, y)}{g(x, y)}$ exists and compute the limit/range.

\begin{enumerate}[{\rm Step 1.}]


\item Compute $p \in \mathbb{Q}[x, y],$ which is the greatest common
	divisor of the polynomials $f$ and $g.$ Replace $f$ and $g$ by the
	new polynomials $\frac{f}{p}$ and $\frac{g}{p},$ respectively. If
	$g(0, 0) \ne 0,$ then the limit $\displaystyle\lim_{(x, y) \to (0, 0)}\frac{f(x, y)}{g(x, y)}$ exists and equals to $\displaystyle\frac{f(0,0)}{g(0,0)}$  and the algorithm stops. Otherwise, proceed to the next step.


\item Set $\displaystyle F_{f,g}:= \frac{\partial f}{\partial x}\frac{\partial g}{\partial y}-\frac{\partial f}{\partial y}\frac{\partial g}{\partial x}$. 
If one of the polynomials $f, g$ and $F_{f, g}$ is not $y$-regular, make a linear transformation, so that the new polynomials $f, g,$ and $F_{f,g}$ are $y$-regular.


\item Let $M$ be the smallest integer\footnote{Maple can determine the best order of truncation to distinguish Newton--Puiseux roots of $g$ by using the command \textsf{puiseux}$(g,x=0,y,0)$.}
such that, for $*\in\{\pm\}$, the truncation mapping $\varphi \mapsto \varphi \mod x^M$ induces a bijection between 
$\mathbf{P}^{*} \left(g\right)$ and $\mathbf{P}_M^{*} \left(g\right)$
and compute $\mathbf{P}^{*}_M({g}).$ 
If $\mathbf{P}^{+}_M({g})$ or $\mathbf{P}^{-}_M({g})$ is nonempty, then the limit $\displaystyle\lim_{(x, y) \to (0, 0)}\frac{f(x, y)}{g(x, y)}$ does not exist and the algorithm stops. (The range is not computed in this case.)
Otherwise, proceed to the next step.

\item If $F_{f, g} \equiv 0,$ perform the following commands:
\begin{enumerate}[\qquad $\bullet$]
\item compute $\displaystyle L:=\lim_{y\to 0^{+}}\frac{f(0,y)}{g(0,y)}\in \overline{\mathbb R};$
\item if $L\in\mathbb R$, then the limit $\displaystyle\lim_{(x, y) \to (0, 0)}\frac{f(x, y)}{g(x, y)}$ exists and equals to $L;$
else the limit $\displaystyle\lim_{(x, y) \to (0, 0)}\frac{f(x, y)}{g(x, y)}$ does not exist and return the range $\{L\};$ 

\item the algorithm stops.
\end{enumerate}
Otherwise, i.e., $F_{f, g} \not\equiv 0,$  proceed to the next step.

\item Compute $\mathbf{P}^{*}_N({F}_{f, g})$ and $\mathbf{P}^{*}_N(f)$ where $N\geqslant M$ is the smallest integer 
 such that, for $*\in\{\pm\}$, the truncation mapping $\varphi \mapsto \varphi \mod x^N$ induces bijections between the following pairs of sets:
\begin{enumerate}[\qquad $\bullet$]
\item $\mathbf{P}^{*} \left(F_{f,g}\right)$ and $\mathbf{P}_N^{*} \left(F_{f,g}\right)$; 
\item $\mathbf{P}^{*} (f)$ and $\mathbf{P}_N^{*} (f)$; and 
\item $\mathbf{P}^{*} (F_{f, g})\setminus\mathbf{P}^{*}(f)$ and $\mathbf{P}_N^{*}(F_{f, g})\setminus\mathbf{P}_N^{*}(f)$.
\end{enumerate}

\item If $\mathbf{P}^{+}_N(f)\cup \mathbf{P}^{-}_N(f)=\emptyset,$ set $\textrm{MIN} :=+\infty$ and $\textrm{MAX} :=-\infty.$ Otherwise set $\mathrm{MIN}:=\mathrm{MAX}:=0.$
\item For each $\gamma_N\in \mathbf{P}_N^{*}\left(F_{f,g}\right)\setminus\mathbf{P}_N^{*}(f),$ let
$$\displaystyle L_{\gamma_N}:=\lim_{x\to 0^{+}}\frac{f(x,\gamma_N(x))}{g(x,\gamma_N(x))}\in \overline{\mathbb R}.$$ 

If $\text{MIN} > L_{\gamma_N}$, set $\text{MIN}:= {\gamma_N}.$ If $\text{MAX} < L_{\gamma_N},$ set $\text{MAX}:={\gamma_N}$. 

\item If $\text{MIN}=\text{MAX}\in\mathbb R$, then the limit $\displaystyle\lim_{(x, y) \to (0, 0)}\frac{f(x, y)}{g(x, y)}$ exists and equals to $\text{MIN}=\text{MAX}.$
	Otherwise, the limit $\displaystyle\lim_{(x, y) \to (0, 0)}\frac{f(x, y)}{g(x, y)}$
	does not exist and return the  range $[\text{MIN}, \text{MAX}].$
\end{enumerate}

\subsection*{Empirical results} \
{\upshape We implement our algorithm BiLimit in
		{\scshape Maple} 2021 and present an experimental comparison
		of our method with  Maple's built-in command {\sf limit/multi} for
		computing limits of the following bivariate rational
		functions at $(0,0)$. When the limit does not exist, the range can be also computed for the case of isolated zero at the origin of the denominator. 
		The rational functions in (1)-(2) are taken
		from \cite{Cadavid2013} and in (3)-(8) are taken from
		\cite{Zeng2020}. 
	\begin{enumerate}
		\item $f_1=x^4+x^2y+y^2$ and 
			  $g_1=x^2+y^2$. The limit does not exist. The range is $[0,1].$	
		\item $f_2=x^4+3x^2y-x^2-y^2$ and 
			  $g_2=x^2+y^2$. The limit is $-1$.
		\item $f_3=2y^5+x^2y^2-8xy^3-13y^4-2x^3+6x^2y+28xy^2+24y^3-4x^2-12xy-9y^2$ and 
			  $g_3=y^4-5xy^2-4y^3+7x^2+10xy+4y^2$. The limit does not exist. The range is $\left[-\frac{19}{3},0\right].$	
		\item $f_4=4x^2y^2-4xy^3+y^4-2xy^2+y^3$ and
			  $g_4=8x^2y^2-8xy^3+3y^4+8x^2-8xy+2y^2$. The limit does not exist. The range is $\left[-\frac{\sqrt{2}}{4},\frac{\sqrt{2}}{4}\right].$
		\item $f_5=10x^2y^2+x^3+2x^2y+4xy^2+6x^2+6xy+3y^2$ and
			  $g_5=3x^2y^2+2x^3+2xy+y^2$. The limit does not exist
			  and $(0,0)$ is a non-isolated zero of
		  $g_5$.
		\item $f_6=2x^2y^2+x^2y+2xy^2+y^3+x^2+2xy+2y^2$ and
			  $g_6=x^2y^2+x^2+2xy+2y^2$. The limit is $1$.
		\item $f_7=10x^2y^2+x^3+2x^2y+4xy^2+6x^2+6xy+3y^2$ and 
			  $g_7=3x^2y^2+2x^2+2xy+y^2$. The limit is $3$.
		\item $f_8=10x^2y-26x^3+37x^2y^2-8xy^3+2y^5-18xy^4+3y^6$ and
			  $g_8=24x^2 +3y^2-21xy^2-5xy+2y^3 + 5y^4$. The limit is $0$.
		\item $f_9=2x^2y^2+x^2y+2xy^2+y^3+x^2+2xy+2y^2$ and
			  $g_9=x^4y^4+x^2+2xy+2y^2$. The limit is $1$.
		\item $f_{10}=f_2f_7$ and 
			  $g_{10}=g_2g_7$. The limit is $-3$.
		\item $f_{11}=f_6f_7$ and 
			  $g_{11}=g_6g_7$. The limit is $3$.
		\item $f_{12}=f_2f_6$ and 
			  $g_{12}=g_2g_6$. The limit is $-1$.
		\item $f_{13}=f_2f_6+x^6y^6$ and 
			  $g_{13}=g_2g_6+x^4y^4$. The limit is $-1$.
		\item $f_{14}=f_2f_6+x^{10}y^{10}$ and 
			  $g_{14}=g_2g_6+x^8y^8$. The limit is $-1$.		
		\item $f_{15}=f_2f_6f_7$ and 
			  $g_{15}=g_2g_6g_7$. The limit is $-3$.
		\item $f_{16}=f_2f_6f_7f_8$ and 
			  $g_{16}=g_2g_6g_7g_8$. The limit is $0$.			
		\item $f_{17}=f_2f_6f_7f_9$ and 
			  $g_{17}=g_2g_6g_7g_9$. The limit is $-3$.		
		\item $f_{18}=x^2$ and 
			  $g_{18}=x^4+y^4$. The limit does not exist. The range is $[0,+\infty].$	
		\item $f_{19}=x^3$ and 
			  $g_{19}=x^4+y^4$. The limit does not exist. The range is $[-\infty,+\infty].$	
		\item $f_{20}=x^4+x^2y+y^2$ and 
			  $g_{20}=x^6+y^2$. The limit does not exist. The range is $\left[\frac{3}{4},+\infty\right].$	
		\item $f_{21}=x^4+x^2y^2+y^4$ and 
			  $g_{21}=x^6+y^4$. The limit does not exist. The range is $[1,+\infty].$
	\end{enumerate}

	Below we show the CPU time consumed by BiLimit and {\sf
	limit/multi} to compute the limit/range of $\frac{f_i}{g_i}$ at
	$(0,0)$ for $i=1,\dots,21$. The symbol ``$\times\times\times$''
	means that the limit does not exist but the range can not be
determined. 

$$\begin{array}{|c|c|c|c|c|}
\hline
i&\mathrm{BiLimit:time}&\mathrm{BiLimit:limit/range}&\mathsf{limit/multi:time}&\mathsf{limit/multi:limit/range}\\
\hline
1&0.375s&[0,1]&0.109s&[0,1]\\
\hline
2&0.156s&-1&0.125s&-1\\
\hline
3&0.281s&\left[-\frac{19}{3},0\right]&0.125s&\left[-\frac{19}{3},0\right]\\
\hline
4&0.203s&\left[-\frac{\sqrt{2}}{4},\frac{\sqrt{2}}{4}\right]&0.234s&\left[-\frac{\sqrt{2}}{4},\frac{\sqrt{2}}{4}\right]\\
\hline
5&0.047s&\times\times\times&0.062s&\times\times\times\\
\hline
6&0.187s&1&0.250s&1\\
\hline
7&0.187s&3&0.390&3\\
\hline
8&0.219s&0&0.421s&0\\
\hline
9&0.187s&1&0.531s&1\\
\hline
10&0.297s&-3&0.982s&-3\\
\hline
11&0.296s&3&1.358s&3\\
\hline
12&0.297s&-1&0.639s&-1\\
\hline
13&0.344s&-1&2.542s&-1\\
\hline
14&0.328s&-1&7.847s&-1\\
\hline
15&0.530s&-3&4.477s&-3\\
\hline
16&0.734s&0&0.874s-40.389s&0\\
\hline
17&3.151s&-3&34.554s&-3\\
\hline
18&0.140s&[0,+\infty]&0.172s&\times\times\times\\
\hline
19&0.140s&[-\infty,+\infty]&0.171s&\times\times\times\\
\hline
20&0.374s&\left[\frac{3}{4},+\infty\right]&0.156s&\left[\frac{3}{4},1\right]\ (\text{wrong})\\
\hline
21&0.406s&[1,+\infty]&0.125s&\times\times\times\\
\hline
\end{array}$$

Observe that both of our algorithm BiLimit and the Maple command {\sf limit/multi} can give the right answers for all these rational functions.
	For $\frac{f_{18}}{g_{18}}$, $\frac{f_{19}}{g_{19}}$ and $\frac{f_{21}}{g_{21}},$ our algorithm BiLimit can also compute the range while the Maple command {\sf limit/multi} cannot. 
	Furthermore, for $\frac{f_{20}}{g_{20}}$, our algorithm BiLimit returns the right range while the Maple command {\sf
	limit/multi} returns a wrong one.	
	Note also that our algorithm BiLimit is faster than the Maple command {\sf 	limit/multi}, except for $i=1,2,3,20,21$ for which all of them used very little time.
	In addition, it seems that the CPU time for the command \text{{\sf limit/multi}} to compute the limit of $\frac{f_{16}}{g_{16}}$ at $(0,0)$ is very inconsistent, i.e., the CPU time varies very much when computing repeatedly this example; so we list the less and most time spent in this example.
	The numerical experiments were carried out on a PC with dual
	64-bit Intel Xeon E5-2630 2.20 GHz CPUs and 32G RAM. 



\end{document}